\newtheorem{theorem}{Theorem}[section]
\newtheorem{corollary}[theorem]{Corollary}
\newtheorem{lemma}[theorem]{Lemma}
\newtheorem{proposition}[theorem]{Proposition}
\newtheorem{definition}[theorem]{Definition}
\newtheorem{remark}[theorem]{Remark}
\def\N{\mathbb{N}}
\def\Z{\mathbb{Z}}
\def\R{\mathbb{R}}
\def\epsilon{\varepsilon}
\let\e=\varepsilon
\let\vp=\varphi
\let\t=\tilde
\let\ol=\overline
\let\ul=\underline
\let\mc=\mathcal
\def\eq#1{{\rm(\ref{eq:#1})}}
\def\thm#1{Theorem~\ref{thm:#1}}
\def\seq#1{(#1_n)_{n\in\N}}
\def\limn{\lim_{n\to+\infty}}
\def\MP{maximum principle}
\def\tilde{\widetilde}
\def\1{\text{1}}
\newenvironment{formula}[1]{\begin{equation}\label{eq:#1}}{\end{equation}\noindent}
\def\Fi#1{\begin{formula}{#1}}
\def\Ff{\end{formula}\noindent}
\newcommand{\SE}{\setcounter{equation}{0} \section}
\newcommand{\be}{\begin{equation}}
\newcommand{\ee}{\end{equation}}
\newcommand{\baa}{\begin{array}}
\newcommand{\eaa}{\end{array}}
\newcommand{\ba}{\begin{eqnarray}}
\newcommand{\ea}{\end{eqnarray}}
\begin{document}
\date{}
\title{\bf{Admissible speeds of transition fronts for non-autonomous monostable
equations}}
\author{Fran\c cois Hamel$^{\hbox{\small{ a}}}$ and Luca
Rossi$^{\hbox{\small{ b }}}$\thanks{This work has been carried out in the framework of the
Labex Archim\`ede (ANR-11-LABX-0033) and of the A*MIDEX project
(ANR-11-IDEX-0001-02), funded by the ``Investissements d'Avenir" French Government
program managed by the French National Research Agency (ANR). The research leading to
these results has also received funding from the European Research Council under the
European Union's Seventh Framework Programme (FP/2007-2013) / ERC Grant Agreement
n.321186~- ReaDi~- Reaction-Diffusion Equations, Propagation and Modelling, and from Italian
GNAMPA-INdAM. Part of this work was carried out during visits by F.~Hamel to the
Universit\`a di Padova, whose hospitality is thankfully acknowledged.}\\
\\
\footnotesize{$^{\hbox{a }}$Aix Marseille Universit\'e, CNRS, Centrale Marseille}\\
\footnotesize{Institut de Math\'ematiques de Marseille, UMR 7373, 13453 Marseille, France}\\
\footnotesize{$^{\hbox{b }}$Dipartimento di Matematica, Universit\`a di Padova,
via Trieste 63, 35121 Padova, Italy}}
\maketitle

\begin{abstract}
We consider a reaction-diffusion equation with a nonlinear term of the Fisher-KPP type,
depending on time $t$ and admitting two limits as $t\to\pm\infty$. We derive the set of
admissible asymptotic past and future speeds of transition fronts for such equation. We further
show that any transition front which is non-critical as $t\to-\infty$ always admits two
asymptotic past and future speeds. We finally describe the asymptotic profiles of the
non-critical fronts as $t\to\pm\infty$.
\end{abstract}


\section{Introduction and main results}\label{intro}

This paper is concerned with asymptotic dynamical properties of front-like solutions
for time-dependent reaction-diffusion equations of the type
\Fi{P=f}
u_t=u_{xx}+f(t,u),\quad t\in\R,\ x\in\R.
\Ff
We focus here on the case where the reaction term $f$ admits some limits as
$t\to\pm\infty$. These limits are in general different and the medium is thus in
general not uniquely ergodic. Actually, even if the limits of $f(t,u)$ as
$t\to\pm\infty$ are equal, the medium is in general truly time-dependent and is
not periodic, almost-periodic or even recurrent. We prove the existence of
solutions which move with some~--~in general different~--~speeds as
$t\to\pm\infty$ and we also characterize the set of all admissible asymptotic
speeds as~$t\to\pm\infty$ among all time-global front-like solutions.

Throughout the paper, the reaction term $f:\R\times[0,1]\to\R$ is assumed to be
uniformly H\"older
continuous, of class $C^1$ and such that $\partial_uf:=\frac{\partial f}{\partial u}$ is
bounded in $\R\times[0,1]$. We will further require that
\be\label{eq:f/u}\left\{\baa{l}
f(t,0)=f(t,1)=0\ \hbox{ for all }t\in\R,\vspace{3pt}\\
f(t,u)\ge 0\ \hbox{ for all }(t,u)\in\R\times[0,1],\vspace{3pt}\\
\displaystyle\frac{f(t,u)}{u}\hbox{ is nonincreasing with respect to }u\in(0,1],
\ \hbox{ for all }t\in\R,\eaa\right.
\ee
and that there are two $C^1$ functions $f_{\pm}:[0,1]\to\R$ such that
\be\label{fpm}\left\{\baa{l}
f_{\pm}(0)=f_{\pm}(1)=0,\ \ f_{\pm}(u)>0\ \hbox{ for all }u\in(0,1),\vspace{3pt}\\
\displaystyle\frac{f(t,u)}{f_{\pm}(u)}\to1\hbox{ as }t\to\pm\infty,\
\hbox{ uniformly for }u\in(0,1).\eaa\right.
\ee
Notice that the hypotheses imply in particular that the functions $u\mapsto f_{\pm}(u)/u$
are nonincreasing with respect to $u\in(0,1]$ and that $f'_{\pm}(0)>0$. We denote
\be\label{defmupm}
\mu_{\pm}:=f'_{\pm}(0)>0\ \hbox{ and }\ \ul\mu:=\min(\mu_-,\mu_+)>0.
\ee
In order to derive the existence result, we will also need the following regularity property:
\be\label{C1omega}
f(t,u)\geq \partial_u f(t,0)u-Cu^{1+\omega}\ \text{ for all }(t,u)\in\R\times(0,\delta),
\ee
for some $C>0$ and $\delta,\omega\in(0,1]$.

Such nonlinearities $f(t,u)$ are said to be of the Fisher or KPP type, for Kolmogorov,
Petrovski and Piskunov, by analogy with the time-independent case $f(u)$ which was first
considered in~\cite{fi,kpp}. It follows from assumption~(\ref{fpm}) that $f(t,\cdot)>0$ in $(0,1)$
for all $|t|$ large enough. However, no strict sign assumption is made on $f$ for intermediate
times $t$. In particular, the functions~$f(t,\cdot)$ may well be identically equal to $0$ on
$[0,1]$ for some times $t$ belonging to a non-empty bounded set.

A typical example of a function $f(t,u)$ satisfying all above assumptions is
$f(t,u)=\tilde{\mu}(t)\,\tilde{f}(u)$
where~$\tilde{f}:[0,1]\to\R$ is of class $C^{1,\omega}$, $\tilde{f}(0)=\tilde{f}(1)=0$,
$\tilde{f}>0$ on $(0,1)$, $u\mapsto\tilde{f}(u)/u$ is nonincreasing with respect to $u\in(0,1]$,
and the function $\tilde{\mu}:\R\to[0,+\infty)$ is of class $C^1$ and admits some limits
$\tilde{\mu}_{\pm}=\lim_{t\to\pm\infty}\tilde{\mu}(t)$ in $(0,+\infty)$. In this case,
$f_{\pm}(u)=\tilde{\mu}_{\pm}\tilde{f}(u)$ for all~$u\in[0,1]$ and
$\mu_{\pm}=\tilde{\mu}_{\pm}\tilde{f}'(0)$.


\subsection{Notions of transition fronts and asymptotic mean speeds}

Equations of the type~\eq{P=f} are known to be good models to describe the propagation of
fronts connecting the steady states $0$ and $1$, see e.g.~\cite{f,mu,sk}. The solution $u$
typically stands for the density of a species invading an open space and the fronts are known
to play a fundamental role in the description of the dynamical properties of the solutions
of~\eq{P=f}. We will recall a bit later some of the main results about the existence and
dynamical properties of known front-like solutions of particular equations of the type~\eq{P=f}.

From a mathematical point of view, for problem~\eq{P=f}, using the same terminology as
in~\cite{bh1,bh2}, the front-like solutions connecting $0$ (say, on the right) and $1$ (on the
left) are called transition fronts and they are defined as follows:

\begin{definition}\label{def1} For problem~\eq{P=f}, a transition front connecting $0$ and~$1$
is a time-global classical solution~$u:\R\times\R\to[0,1]$ for which there exists a
function~$X:\R\to\R$ such that
\be\label{gtf}\left\{\baa{ll}
u(t,X(t)+x)\to 1 & \hbox{as }x\to-\infty\vspace{3pt}\\
u(t,X(t)+x)\to 0 & \hbox{as }x\to+\infty\eaa\right.\hbox{ uniformly in }t\in\R.
\ee
\end{definition}

Several comments on this definition are in order. First of all, it is actually a particular
case of a more general definition given in~\cite{bh1,bh2} in a broader framework. For
the one-dimensional equation~\eq{P=f}, the transition fronts connec\-ting~$0$ and
$1$ correspond to the ``wave-like" solutions defined in~\cite{s3,s4} (see also~\cite{m}
for a different notion involving the continuity with respect to the environment around
the front position). Roughly speaking, condition~(\ref{gtf}) means that the
diameter of the transition zone between the sets where~$u\simeq 1$ and
$u\simeq 0$ is uniformly bounded in time: the fundamental property in~(\ref{gtf}) is
the uniformity of the limits with respect to time~$t\in\R$. The limits~(\ref{gtf}) imply in
particular that, given any real numbers~$a$ and~$b$ such that~$0<a\le b<1$, there is a
cons\-tant~$C=C(u,a,b)\ge0$ such that, for every $t\in\R$, $\big\{x\in\R;\ a\le u(t,x)\le b\big\}
\subset\big[X(t)-C,X(t)+C\big]$. Standard parabolic estimates and the strong maximum
principle also easily imply that, for any transition front $u$ connecting $0$ and $1$, and for
every $C\ge0$,
\be\label{infsup}
0<\inf_{t\in\R,\,x\in[X(t)-C,X(t)+C]}u(t,x)\le\sup_{t\in\R,\,x\in[X(t)-C,X(t)+C]}u(t,x)<1.
\ee
Notice furthermore that, for a given transition front $u$ connecting $0$ and $1$, the family
$(X(t))_{t\in\R}$ is not uniquely defined since, for any bounded function $\xi:\R\to\R$,
the family $(X(t)+\xi(t))_{t\in\R}$ satisfies~(\ref{gtf}) if~$(X(t))_{t\in\R}$ does. Hence, one
can choose for instance as $X(t)$ one point $x$ such that~$u(t,x)=1/2$.
On the other hand, if $(X(t))_{t\in\R}$ and $(\tilde{X}(t))_{t\in\R}$ are
associated to a given transition front~$u$ connecting~$0$ and $1$ in the sense
of~(\ref{gtf}), then it can be immediately seen that
\be\label{tildeX}
\sup_{t\in\R}\big|X(t)-\tilde{X}(t)\big|<+\infty.
\ee
Lastly, it is shown in Proposition 4.1 of~\cite{hr}, in the case of general space-time
dependent reaction-diffusion equations, that for any transition front $u$ connecting $0$
and $1$, any function $X$ such that~\eqref{gtf} holds has uniformly bounded
local oscillations,
that is,
\be\label{Xtau}
\forall\,\tau\ge0,\ \ \sup_{(t,s)\in\R^2,\,|t-s|\le\tau}|X(t)-X(s)|<+\infty.
\ee

When the function $f=f(u)$ does not depend on the time variable, the most typical examples of
transition fronts connecting $0$ and $1$ are the standard traveling fronts $u(t,x)=\phi(x-ct)$
with~$0=\phi(+\infty)<\phi(\xi)<\phi(-\infty)=1$ for all $\xi\in\R$. Under the Fisher-KPP
hypothesis, such traveling fronts exist if and only if $c\ge2\sqrt{f'(0)}$ and, for each
$c\ge2\sqrt{f'(0)}$, the function $\phi=\phi_c$ is decreasing and unique up to
shifts~\cite{aw,kpp}. Furthermore, these fronts~$\phi_c(x-ct)$ are known to be stable with
respect to perturbations in some suitable weighted spaces and to attract the solutions of the
associated Cauchy problem for a large class of exponentially decaying initial conditions, see
e.g.~\cite{bmr,b,ev,hnrr,k,kpp,lau,sa,u}. When the function~$f=f(t,u)$ depends periodically on
time $t$, the standard traveling fronts do not exist anymore in general and the notion of
traveling fronts is replaced by that of pulsating traveling fronts $\phi(t,x-ct)$, where $\phi$ is
periodic in its first variable and converges to $1$ (resp.~$0$) as $x-ct\to-\infty$ (resp. as
$x-ct\to+\infty$). The existence, uniqueness and stability properties of such pulsating traveling
fronts have been established in~\cite{hr2,lyz,lz1,lz2,n1,nrx,w}. The notions of pulsating
traveling fronts can also be extended in time almost-periodic, almost-automorphic, recurrent or
uniquely ergodic media, we refer to~\cite{hs,s1,s2,s3,s5,s6,s7} for further existence,
qualitative and asymptotic properties in such media.

In the present paper, due to the time-dependence and assumption~(\ref{fpm}),
equation~\eq{P=f} is not assumed to be periodic, almost-periodic, recurrent or
uniquely ergodic in time and the standard traveling or pulsating traveling
fronts no longer exist in general. The notion of transition fronts
satisfying~(\ref{gtf}) provides the good framework to describe the propagation
of more general front-like solutions. This notion has already been used in
various contexts. For instance, particular transition fronts have recently been
constructed for monostable equations~\eq{P=f} in~\cite{bh2,NR1} (see the
comments after Theorem~\ref{th1} below). Recently, transition fronts for
reaction-diffusion equations with general non-periodic monostable $x$-dependent
nonlinearities~$f(x,u)$ have also been constructed in~\cite{n2,nrrz,z1,z2}.

The time-dependent monostable equation~\eq{P=f} considered here, with $f(t,u)$
having some limits as~$t\to\pm\infty$, is one of the simplest examples of
heterogeneous equations which are not periodic, recurrent or uniquely ergodic in
time. Nevertheless, equation~\eq{P=f} already captures new interesting
propagating front-like solutions. In particular, we will prove the existence of
transition fronts which had not been considered in~\cite{bh2,NR1}. We will
characterize the set of all admissible rates of propagation, as well as the asymptotic
profiles of the non-critical fronts.

As for the possible rates of propagation, an important notion associated to the
transition fronts is that of their possible speed. Namely, we say that a
transition front connecting~$0$ and $1$ for~\eq{P=f}, in the sense
of Definition~\ref{def1}, has a {\em global mean speed} $\gamma$ if
\be\label{meanspeed}
\frac{X(t)-X(s)}{t-s}\to\gamma\ \hbox{ as }t-s\to+\infty,
\ee
that is $(X(t+\tau)-X(t))/\tau\to\gamma$ as $\tau\to+\infty$ uniformly in
$t\in\R$.\footnote{We point out that this definition slightly differs from the one used
in~\cite{bh1,bh2}, but, in this one-dimensional situation and given Definition~\ref{def1}, it is
quite easy to see that the two definitions coincide.}
Applying recursively Property~\eqref{Xtau} with, say, $\tau=1$, one readily
sees that
the function $\tau\mapsto(X(t+\tau)-X(t))/\tau$ is bounded on
$[1,+\infty)$, independently
of $t$, whence the global mean speed cannot be infinite.

If a transition front connecting $0$ and $1$ has a global mean speed $\gamma$, then this
speed does not depend on the family~$(X(t))_{t\in\R}$, due to the property~(\ref{tildeX}). But
the global mean speed, if any, does depend on the transition front. This is seen already in
homogeneous media, where~$f=f(u)$. Indeed, any standard traveling front $\phi(x-ct)$ in a
homogeneous medium has a global mean speed equal to $c$, and then the set of admissible
speeds in the class of all standard traveling fronts is equal to~$[2\sqrt{f'(0)},+\infty)$. This
property remains true if one considers the whole class of transition fronts, because the global
mean speed cannot be smaller than the spreading speed for the Cauchy problem with
compactly supported initial datum, which is $2\sqrt{f'(0)}$, see~\cite{aw}. In
the non-homogeneous case considered in the present paper, the picture is more
complicated and may
be radically different. Namely, under some slightly stronger assumptions
than~\eqref{eq:f/u}-\eqref{fpm}, Corollary~\ref{cor1} below asserts that the set of global mean
speeds among all transition fronts coincides with $[2\sqrt{\mu_-},+\infty)$ if $\mu_+\leq\mu_-$,
whereas it is empty if $\mu_+>\mu_-$.

It is important to realize at this stage that, in general, a given transition front
connec\-ting~$0$ and~$1$ may not have any global mean speed, even for some
homogeneous equations~\eq{P=f} with $f=f(u)$. More precisely, it follows from~\cite{hn2}, as
shown in~\cite{hr}, that for a $C^2$ concave~$f:[0,1]\to\R$ such that
$f(0)=f(1)=0<f(s)$ on $(0,1)$ and for any real numbers $c_1$ and~$c_2$ such that
$2\sqrt{f'(0)}\le c_1<c_2$, there exist transition fronts $u$ connecting $0$ and $1$ such that
\be\label{transitionhn}\left\{\baa{ll}
u(t,x)-\phi_{c_1}(x-c_1t)\to0 & \hbox{as }t\to-\infty,\vspace{3pt}\\
u(t,x)-\phi_{c_2}(x-c_2t)\to0 & \hbox{as }t\to+\infty,\eaa\right.\hbox{ uniformly in }x\in\R,
\ee
where $\phi_{c_1}(x-c_1t)$ and $\phi_{c_2}(x-c_2t)$ are any two given standard traveling
fronts connecting~$0$ and~$1$ with speeds $c_1$ and $c_2$ respectively. This result implies
in particular that, even in a homogeneous medium, the notion of transition fronts is necessary
to describe front-like solutions that are not standard traveling fronts. Furthermore, the transition
fronts satis\-fying~(\ref{transitionhn}) for~\eq{P=f} with~$f=f(u)$ do not have a global mean
speed as soon as $c_1\neq c_2$ since, whatever~$X(t)$ may be, one has $X(t)/t\to c_1$ as
$t\to-\infty$ and $X(t)/t\to c_2$ as~$t\to+\infty$ for these transition fronts. Nevertheless, it is
natural to say that these fronts have an asymptotic speed, $c_1$, as $t\to-\infty$, and another
asymptotic speed, $c_2$, as $t\to+\infty$.

These facts lead us naturally to the definition of the notion of possible asymptotic past and
future mean speeds, as $t\to-\infty$ and as $t\to+\infty$, for the general time-dependent
equation~\eq{P=f}.

\begin{definition} We say that a transition front connecting $0$ and~$1$ for the
equation~\eq{P=f} has an {\em asymptotic past speed} $c_-\in\R$, resp. an {\em asymptotic
future speed} $c_+\in\R$, if
\be\label{cpm2}
\frac{X(t)}{t}\to c_-\hbox{ as }t\to-\infty,\ \hbox{ resp. }\frac{X(t)}{t}\to c_+\hbox{ as }t\to+\infty.
\ee
\end{definition}

Notice that, for a given transition front, these signed speeds, if any, do not depend on the
family $(X(t))_{t\in\R}$, due to~\eqref{tildeX} again. Clearly, if a transition front admits a global
mean speed~$\gamma$ in the sense of~(\ref{meanspeed}), then it has asymptotic past and
future speeds equal to $\gamma$. It is natural to ask if the reverse property holds true.
Namely, if a front has past and future speeds both equal to some $\gamma$, does it admit a
global mean speed equal to $\gamma$~? We partially answered to this question in
Remark~4.1 of~\cite{hr} in the homogeneous case: the answer is yes provided
$\gamma>2\sqrt{f'(0)}$, but it is not known if $\gamma=2\sqrt{f'(0)}$. Here, we extend this
result to~\eq{P=f}, under some stronger hypotheses on the convergences $f\to f_\pm$
as $t\to\pm\infty$, showing that the answer is yes provided $\gamma>2\sqrt{\mu_-}$.
This can be derived from the last statement of
Theorem~\ref{th2} below about the convergence of the profile to that of standard
fronts, as shown in Section \ref{sec24}. We point out that the answer to the
above question in
general is no for a nonlinearity $f(t,u)$ which does not satisfy~\eqref{fpm}, see
Remark~\ref{rk:NR1} below.

The asymptotic past and future speeds characterize the rate of expansion of
the front at large negative or positive times. These asymptotic speeds might not
exist a priori and one could wonder whether these notions of speeds as
$t\to\pm\infty$ would be sufficient to describe the large time dynamics of all
transition fronts for~\eq{P=f}. As a matter of fact, one of the main purposes of
the present paper will be to characterize completely the set of all admissible
asymptotic speeds and to show that the asymptotic speeds exist for all fronts
which are supercritical as~$t\to-\infty$, in a sense which will be made more
precise in Theorem~\ref{th2} below.


\subsection{Existence of transition fronts}

We first show the existence of some transition fronts with asymptotic speeds~$c_{\pm}$
as~$t\to\pm\infty$ ranging in some explicitly given semi-infinite intervals.

\begin{theorem}\label{th1}
Under the assumptions~\eqref{eq:f/u},~\eqref{fpm} and~\eqref{C1omega}, let $\mu_{\pm}$ be
defined as in~$(\ref{defmupm})$, and~$c_{\pm}$ be any two real numbers such that
\be\label{cpm1}
c_-\ge2\sqrt{\mu_-}\ \ \hbox{and}\ \ c_+\ge\kappa+\frac{\mu_+}{\kappa},\ \ \hbox{with}\ \
\kappa=\min\Big(\sqrt{\mu_+},\frac{c_--\sqrt{c_-^2-4\mu_-}}{2}\Big)>0.
\ee
Then equation~\eq{P=f} admits some transition fronts $u$ connecting $0$ and $1$ with
asymptotic past and future speeds~$c_{\pm}$, that is, such that~\eqref{cpm2} holds.
Furthermore, $u$ satisfies $u_x(t,x)<0$ for all~$(t,x)\in\R\times\R$.
Lastly, in all cases, except possibly when $\mu_+>\mu_-$ and $c_\pm$ satisfy
$c_-=2\sqrt{\mu_-}$ and $c_+=\sqrt{\mu_-}+\frac{\mu_+}{\sqrt{\mu_-}}$,
there exists a bounded function $\xi:\R\to\R$ such that
\Fi{phi+-}
u(t,X(t)+\xi(t)+\cdot)\to\phi_{c_{\pm}}\ \hbox{ in }C^2(\R)\ \hbox{ as
}t\to\pm\infty,
\Ff
where $\phi_{c_{\pm}}(x-c_{\pm}t)$ are standard traveling fronts connecting $0$
and~$1$ for the limiting equations with non\-linearities $f_{\pm}$.
\end{theorem}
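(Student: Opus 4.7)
I plan to build $u$ as a $C^{1,2}_{\loc}$ subsequential limit of classical solutions of the Cauchy problem for~\eq{P=f} on $[-n,+\infty)\times\R$, sandwich it between explicit exponential sub- and super-solutions to pin down the asymptotic speeds, and identify the limiting profiles via parabolic compactness combined with the classical uniqueness (up to translation) of Fisher-KPP traveling fronts. The KPP bound $f(t,u)\le\partial_u f(t,0)\,u$, a direct consequence of the monostability assumption in~\eqref{eq:f/u}, drives the super-solutions, while its refinement~\eqref{C1omega} drives the sub-solutions.

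\textbf{Super- and sub-solutions.} Let $\lambda_\pm$ denote the smaller root of $\lambda^2-c_\pm\lambda+\mu_\pm=0$ (interpreted as $\sqrt{\mu_\pm}$ at the critical speed); condition~\eqref{cpm1} forces $\lambda_+\le\kappa\le\lambda_-$. For any $\lambda>0$ and any $C^1$ function $m:\R\to\R$ with $m'(t)\ge\lambda+\partial_u f(t,0)/\lambda$, the function $\min(1,A\,e^{-\lambda(x-m(t))})$ is a super-solution of~\eq{P=f}, and a sum of two such exponentials remains a super-solution after truncation at $1$. From~\eqref{C1omega} one analogously builds a sub-solution of the form $(A\,e^{-\lambda(x-m(t))}-B\,e^{-\lambda'(x-m(t))})_+$ with $\lambda'>\lambda$ sufficiently close and appropriate $A,B>0$. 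To cover all admissible pairs $(c_-,c_+)$, I plan to use a two-mode super-solution combining rates $\lambda_+$ and $\lambda_-$, with associated functions $m_\pm(t)$ satisfying $m_\pm'(t)=\lambda_\pm+\partial_u f(t,0)/\lambda_\pm$. Because $\lambda_+\le\lambda_-$, the mode of rate $\lambda_-$ dominates the sum at the level set $\{u\simeq 1/2\}$ as $t\to-\infty$ (yielding position $\sim m_-(t)$, hence speed $c_-$), while the mode of rate $\lambda_+$ takes over as $t\to+\infty$ (giving speed $c_+$).

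\textbf{Construction, limit, and profile convergence.} For each $n\ge 1$, let $u_n$ solve~\eq{P=f} on $(-n,+\infty)\times\R$ with initial data at $t=-n$ a decreasing traveling front of $u_t=u_{xx}+f_-(u)$ whose tail matches the two-mode super-solution, and translated so that $u_n(0,0)=1/2$. Parabolic comparison sandwiches $u_n$ between the sub- and super-solutions; Schauder estimates and a diagonal extraction then produce a time-global classical solution $u:\R\times\R\to(0,1)$ of~\eq{P=f}, still sandwiched. Monotonicity of the $u_n$ passes to the limit, and the strong maximum principle applied to $u_x$ yields $u_x<0$ throughout $\R\times\R$. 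The sandwich forces $X(t)-m_-(t)=O(1)$ as $t\to-\infty$ and $X(t)-m_+(t)=O(1)$ as $t\to+\infty$, hence $X(t)/t\to c_\pm$ and~\eqref{gtf} follows. For~\eqref{eq:phi+-}, given $t_k\to\pm\infty$, parabolic compactness applied to $u(t+t_k,x+X(t_k))$ yields a subsequential $C^{1,2}_{\loc}$ limit $\tilde u$ solving $\tilde u_t=\tilde u_{xx}+f_\pm(\tilde u)$ by~\eqref{fpm}, which is monotone in $x$ and inherits the transition front property with asymptotic speed $c_\pm$; Fisher-KPP uniqueness (up to translation) identifies $\tilde u$ with a translate of $\phi_{c_\pm}(x-c_\pm t)$, yielding a bounded $\xi$. \textbf{The main difficulty} is to tune the two-mode construction so that the intended mode dominates at each end of time, which~\eqref{cpm1} is exactly designed to allow; the excluded case $\mu_+>\mu_-$, $c_-=2\sqrt{\mu_-}$, $c_+=\sqrt{\mu_-}+\mu_+/\sqrt{\mu_-}$ is doubly critical ($\lambda_+=\lambda_-=\sqrt{\mu_-}$), and the critical front at $-\infty$ carries Bramson-type logarithmic corrections that obstruct the bounded-shift convergence.
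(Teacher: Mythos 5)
Your plan is essentially the paper's strategy for the \emph{supercritical} part of the theorem, i.e.\ the interior of the set $\mc{K}$ of admissible decay rates: the paper also works with solutions of exponential type $e^{-\kappa(x-m(t))}$, $m'(t)=\kappa+\mu(t)/\kappa$ (these are the fronts of~\cite{NR1}, recalled in Proposition~\ref{pro:NR1}), and it realizes your ``two-mode'' super/subsolution as $\min(u_1+u_2,1)$ and $\max(u_1,u_2)$ for two such fronts with rates $\kappa_-$ and $\kappa_+$, exploiting exactly the KPP inequality $f(t,\alpha+\beta)\le f(t,\alpha)+f(t,\beta)$ that you invoke. For that regime your sketch is sound, up to one point you should make precise: identifying the subsequential limit $\t u$ as a translate of $\phi_{c_\pm}$ is not a matter of bare ``uniqueness of KPP fronts''; one must first trap $\t u$ between two translates of $\phi_{c_\pm}$ (using the two-sided exponential bounds inherited from the sandwich) and then invoke a Liouville-type classification of entire solutions squeezed between shifts of a front, as the paper does via Proposition~4.3 of~\cite{NR1} and Theorem~3.5 of~\cite{bh1}.

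The genuine gap is the critical cases, which \eqref{cpm1} explicitly allows and which the theorem covers except for a single doubly degenerate sub-case (and even there existence of the front is asserted). When $c_-=2\sqrt{\mu_-}$ one has $\lambda_-=\sqrt{\mu_-}$, and when $c_+=2\sqrt{\mu_+}$ (i.e.\ $\kappa=\sqrt{\mu_+}\le\kappa_-$) one has $\lambda_+=\sqrt{\mu_+}$; at these rates the quadratic $\lambda^2-c\lambda+\mu$ has a double root, so there is no $\lambda'>\lambda$ making $(A e^{-\lambda z}-Be^{-\lambda' z})_+$ a subsolution, and your entire sub-solution mechanism collapses. Without a lower barrier the sandwich gives only an upper bound on $X(t)$, so you cannot conclude $X(t)/t\to c_\pm$ nor even that the limit $u$ is nontrivial and a transition front. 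This is precisely why the paper devotes Section~\ref{sec:critical} to different tools: the critical (steepest) front of~\cite{n2} obtained as a limit of solutions with Heaviside data, Angenent-type zero-number/steepness comparisons with the supercritical family $(u_\kappa)$ to control the interface width and the future speed, and a new sharp lower bound on the exponential decay of supersolutions (Theorem~\ref{thm:exp}, proved with the non-exponential subsolution $(1-x/\ln(t+2n))^{\lambda\ln(t+2n)}$), which is indispensable to show that the critical front still travels with speed $\sqrt{\mu_-}+\mu_+/\sqrt{\mu_-}$ as $t\to+\infty$ when $\mu_+>\mu_-$. Likewise, the profile convergence to the critical profile $\phi_{2\sqrt{\mu_+}}$ cannot follow from an exponential sandwich (the critical front decays like $\xi e^{-\sqrt{\mu_+}\xi}$); the paper obtains it by squeezing between the profiles $\phi_{c_\kappa}$ and letting $\kappa\nearrow\sqrt{\mu_+}$. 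None of this is recoverable from your two-mode construction, so the proposal proves only a strict subset of the statement.
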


Concerning the missing case for the last statement of
Theorem \ref{th1}, we show that \eqref{eq:phi+-} holds as
$t\to-\infty$, but we derive the convergence to $\phi_{c_+}$ only along a
particular sequence $t_n\to+\infty$, under the additional assumption that $f_+$
is $C^2$ and concave, see Proposition \ref{pro:criticprofile} below.

A first obvious observation following~(\ref{cpm1}) is that $c_+\ge2\sqrt{\mu_+}$. This relation
is not at all sur\-prising, and is actually immediately necessary, since $2\sqrt{\mu_+}$ is the
spreading speed of the solutions~$u$ of the Cauchy problem~\eq{P=f} with compactly
supported nonzero initial conditions~$0\le u_0\le 1$, in the sense that
$\max_{\R\backslash(-ct,ct)}u(t,\cdot)\to0$ as $t\to+\infty$ for every $c>2\sqrt{\mu_+}$ and
$\min_{[-ct,ct]}u(t,\cdot)\to1$ as $t\to+\infty$ for every~$0\le c<2\sqrt{\mu_+}$ (this asymptotic
result can be easily obtained from the maximum principle and the facts that $u(t,x)$ has a
Gaussian decay as~$x\to\pm\infty$ at every time $t>0$, and initial conditions with Gaussian
decay spread at the speed~$2\sqrt{f'(0)}$ in the time-independent case~\cite{aw,u}). We also
refer to Proposition~\ref{pro:spreading} below for a direct proof of the bounds
$c_\pm\ge2\sqrt{\mu_\pm}$.

A second observation is a comparison between the range of asymptotic past and future
speeds provided by Theorem~\ref{th1} and the sets of admissible speeds for the limiting
problems as $t\to-\infty$ and $t\to+\infty$, which are given  by
$[2\sqrt{\mu_-},+\infty)$ and $[2\sqrt{\mu_+},+\infty)$ respectively. On the one
hand, the range of past
speeds in Theorem~\ref{th1} coincides with $[2\sqrt{\mu_-},+\infty)$, no matter what the
relation between $\mu_-$ and $\mu_+$ is. On the other hand, the range of future speeds
coincides with $[2\sqrt{\mu_+},+\infty)$ if and only if $\kappa$ in~\eqref{cpm1} coincides with
$\sqrt{\mu_+}$ for some $c_-\ge2\sqrt{\mu_-}$, and this happens if and only if
$\mu_+\leq\mu_-$. Indeed, if $\mu_+>\mu_-$ then the minimal future speed
given by
Theorem~\ref{th1} is strictly larger than $2\sqrt{\mu_+}$, namely, it is larger than the spreading
speed for the solutions of the Cauchy problem with compactly supported initial conditions.

The set of asymptotic speeds $c_\pm$ provided by Theorem~\ref{th1}, that is,
satisfying~\eqref{cpm1}, can be equivalently expressed by
\Fi{alternative}
c_\pm=\kappa_\pm+\frac{\mu_\pm}{\kappa_\pm},\quad\kappa_-\in\big(0,\sqrt{\mu_-}\big],
\quad\kappa_+\in\big(0,\min(\kappa_-,\sqrt{\mu_+})\big].
\Ff
 \begin{figure}[ht]
 \centering
 \subfigure[$\mu_+<\mu_-$]
   {\includegraphics[width=5cm]{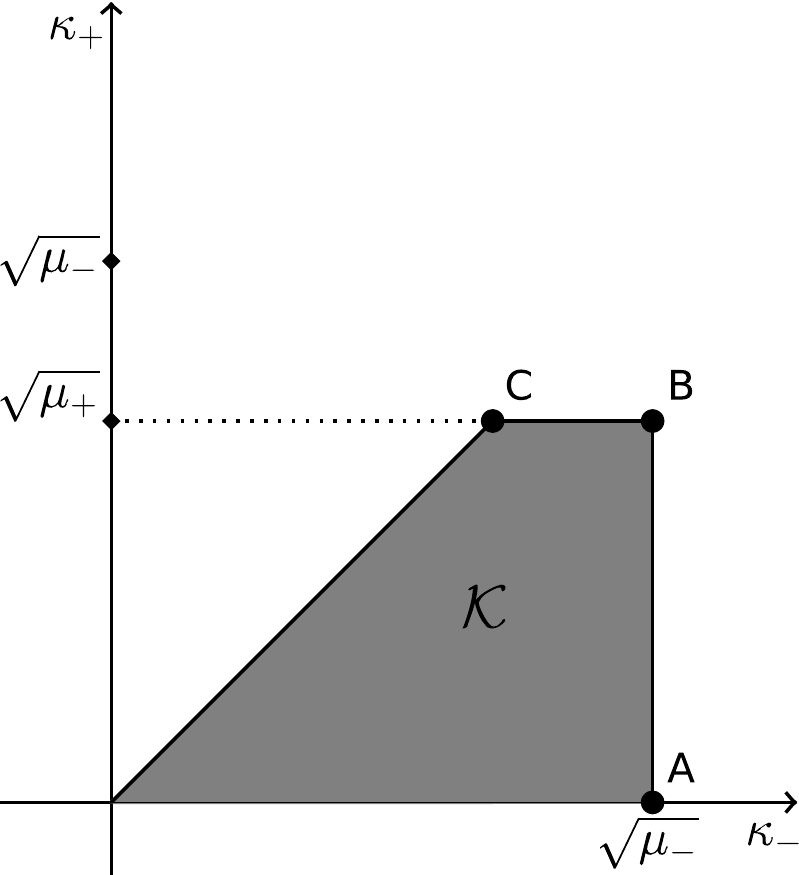}}
 \hspace{15mm}
 \subfigure[$\mu_+\geq\mu_-$]
   {\includegraphics[width=5cm]{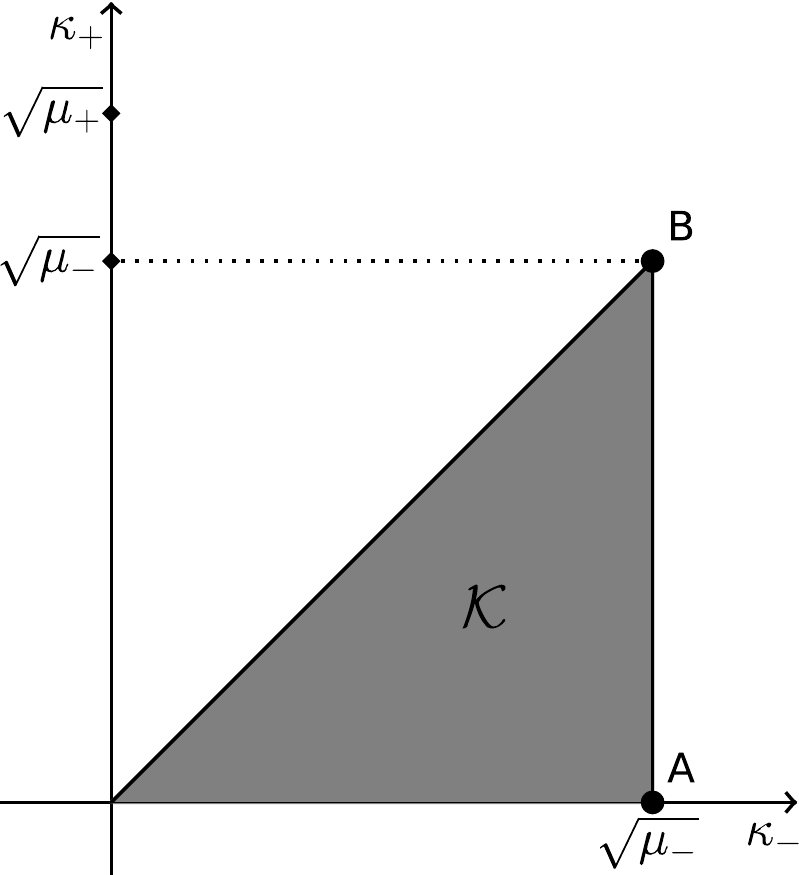}}
 \caption{The set $\mc{K}$ of admissible exponential decays.}
 \label{fig:Kappa}
 \end{figure}

\noindent The admissible pairs $(\kappa_-,\kappa_+)$ in \eqref{eq:alternative}
are represented by the shaded region in Figure \ref{fig:Kappa}.
The expression~\eqref{eq:alternative} yields an immediate
interpretation of the asymptotic speeds: $\mu_\pm$ reflect the characteristics
of the medium as $t\to\pm\infty$, while  $\kappa_\pm$ are the
exponential decaying rates (as $x\to+\infty$) of the asymptotic profiles~$\phi_{c_{\pm}}$
of the front as $t\to\pm\infty$. Thus, when $t$ passes from $-\infty$ to
$+\infty$, the rate of decay of the profile of the fronts in Theorem~\ref{th1} cannot increase. In
particular, if $\mu_+>\mu_-$ then the range of admissible values for
$\kappa_+$ is smaller
than $(0,\sqrt{\mu_+}]$, which is the range of admissible decaying rates for standard fronts of
the limiting problem as $t\to+\infty$. This can be viewed as the real reason why the set of
future speeds is smaller than that of admissible speeds for the limiting problem as $t\to+\infty$
in that case.

Based on the formulation~\eq{alternative}, let us now compare the
asymptotic past and future speeds~$c_{\pm}$ given by Theorem~\ref{th1}
with each other. If~$\mu_+>\mu_-$, the decreasing monotonicity of $k\mapsto
k+\mu_+/k$
in~$(0,\sqrt{\mu_+}]$ yields
\be\label{acceleration}
c_+=\kappa_++\frac{\mu_+}{\kappa_+}\geq\kappa_-+\frac{\mu_+}{\kappa_-}>
\kappa_-+\frac{\mu_-}{\kappa_-}=c_-,
\ee
whence the transition fronts always strictly globally accelerate, in the sense
that the asymptotic future speed~$c_+$ is strictly larger than the past
one~$c_-$. If
$\mu_+=\mu_-=\mu$ (as for instance if~$f$ does not depend on~$t$), then
$c_+=\kappa_++\mu/\kappa_+\geq \kappa_-+\mu/\kappa_-=c_-$, whence the transition fronts
always globally accelerate in this case. Lastly, if~$\mu_+<\mu_-$, then the transition fronts
may globally accelerate because~$c_+$ can be as large as wanted for a given $c_-$, but they
may also strictly decelerate, for any choice of admissible $c_-$: indeed, for any
$\kappa_-\in(0,\sqrt{\mu_-}]$, taking $\kappa_+=\min(\kappa_-,\sqrt{\mu_+})$ we deduce from
the fact that $k\mapsto k+\mu_-/k$ attains its minimum
at~$\kappa=\sqrt{\mu_-}$, that
$c_+=\kappa_++\mu_+/\kappa_+<\kappa_++\mu_-/\kappa_+\leq\kappa_-+\mu_-/\kappa_-=
c_-$.

As a matter of fact, because of the convergence~\eq{phi+-} to the limiting fronts
$\phi_{c_{\pm}}$ as $t\to\pm\infty$, the solutions constructed in Theorem~\ref{th1} above
satisfy more than~(\ref{cpm2}). Namely, except possibly when $\mu_+>\mu_-$ and the speeds $c_{\pm}$ satisfy $c_-=2\sqrt{\mu_-}$ and $c_+=\sqrt{\mu_-}+\mu_+/\sqrt{\mu_-}$, the solutions $u$ of Theorem~\ref{th1} are such that
\be\label{cpm2bis}\left\{\baa{l}
\displaystyle\lim_{\tau\to+\infty}\Big(\sup_{t\le-\tau}\Big|\frac{X(t+\tau)-X(t)}{\tau}-c_-\Big|\Big)
=0,\vspace{3pt}\\
\displaystyle\lim_{\tau\to+\infty}\Big(\sup_{t\ge0}\Big|\frac{X(t+\tau)-X(t)}{\tau}-c_+\Big|\Big)
=0,\eaa\right.
\ee
see Section~\ref{sec23} for the proof of this property. The speeds $c_{\pm}$ are thus truly
asymptotic {\it mean} speeds as~$t\to\pm\infty$. In part~1) of Theorem~2.3 of~\cite{NR1},
some transition fronts connecting $0$ and $1$ were constructed for problem~\eq{P=f} under
more general assumptions on $f$ (in particular,~$f(t,\cdot)$ is not assumed to converge to the
profiles $f_{\pm}$ as $t\to\pm\infty$). With the assumptions of the present paper, the transition
fronts given in~\cite{NR1} are special cases of the ones of Theorem~\ref{th1} above, namely
those for which~\eqref{eq:alternative} holds with~$\kappa_-=\kappa_+\in(0,\sqrt{\ul\mu})$.
Thus, on the one hand the assumptions of~\cite{NR1} are more general but, on the other hand,
the choice of possible asymptotic speeds $c_{\pm}$ provided in the present paper is larger
than in~\cite{NR1}. In particular, the speed~$c_-$ can here be as close as wanted to
$2\sqrt{\mu_-}$ and can even be equal to the critical value $2\sqrt{\mu_-}$, whereas the
corresponding~$c_-$ of~\cite{NR1} is strictly larger than
$\sqrt{\ul\mu}+\mu_-/\sqrt{\ul\mu}\;(\ge2\sqrt{\mu_-})$. Furthermore, once the past
speed~$c_-$ is assigned, the future speed~$c_+$ can be here as large as wanted whereas,
in~\cite{NR1}, the speed~$c_+$ is uniquely determined by~$c_-$ through the formula
$c_+=c_-+2(\mu_+-\mu_-)/(c_--\sqrt{c_-^2-4\mu_-})$. Let us also remark that, here, the future
speed~$c_+$ can also be equal to the critical speed $2\sqrt{\mu_+}$ for the limiting problem
as~$t\to+\infty$, provided~$\mu_+\le\mu_-$ and $c_-$ is not too large, whereas,
in~\cite{NR1},~$c_+$ is always strictly larger than~$2\sqrt{\mu_+}$. Actually, one should
think of the fronts of~\cite{NR1} as the analogues of the standard fronts~$\phi(x-ct)$ for
homogeneous equations, because they have constant exponential decaying rate. These fronts
are the keystone we use to construct other fronts which change their exponential
decay in time, at least in the supercritical case~$c_\pm>2\sqrt{\mu_\pm}$. The analysis of the
critical cases requires a special attention and the method used in the present paper to cover
these cases is actually completely different from~\cite{NR1}.


\subsection{A priori bounds on the asymptotic speeds, and asymptotic profiles}

The second main result is almost the converse of Theorem~\ref{th1}: it shows, in general, the
existence of the asymptotic speeds $c_{\pm}$ and the optimality of the relations~(\ref{cpm1}).

\begin{theorem}\label{th2}
Assume that $(\ref{eq:f/u})$ and~$(\ref{fpm})$ hold, that $f_-$ is of class~$C^2$ and concave
on $[0,1]$ and that there exists a continuous function $\zeta:(-\infty,0)\to\R$ such that
\be\label{hypf-}
\zeta\in L^1(-\infty,0)\ \hbox{ and }\ \sup_{s\in(0,1)}\Big|\frac{f(t,s)}{f_-(s)}-1\Big|\le\zeta(t)\hbox{ for all }t<0.
\ee
Then, for any transition front $u$ connecting $0$ and $1$ for problem~\eq{P=f}, there holds
\be\label{cpm2ter}\left\{\baa{l}
\displaystyle2\sqrt{\mu_-}\le c_-:=\liminf_{t\to-\infty}\frac{X(t)}{t}\le\limsup_{t\to-\infty}
\frac{X(t)}{t}<+\infty,\vspace{3pt}\\
\displaystyle\kappa+\frac{\mu_+}{\kappa}\le c_+:=\liminf_{t\to+\infty}\frac{X(t)}{t}\le
\limsup_{t\to+\infty}\frac{X(t)}{t}<+\infty,\eaa\right.
\ee
where $\kappa$ is as in~$(\ref{cpm1})$. Furthermore, if $c_->2\sqrt{\mu_-}$, then the liminf
and limsup in~$(\ref{cpm2ter})$ are limits, that is, $u$ has asymptotic past and future speeds
$c_{\pm}$ satisfying~\eqref{cpm1}. Lastly, if $c_->2\sqrt{\mu_-}$ and if there exists a
continuous function $\tilde{\zeta}:(0,+\infty)\to\R$ such that
\be\label{hypf+}
\tilde{\zeta}\in L^1(0,+\infty)\ \hbox{ and }\ \sup_{s\in(0,1)}\Big|\frac{f(t,s)}{f_+(s)}-1\Big|\le\tilde{\zeta}(t)\hbox{ for all }t>0,
\ee
then there exists a bounded function $\xi:\R\to\R$ for which~\eqref{eq:phi+-} holds true.
\end{theorem}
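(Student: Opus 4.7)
My plan is to prove the convergences \eqref{eq:phi+-} as $t\to-\infty$ and as $t\to+\infty$ separately, by trapping $u$ between time-shifted copies of the standard KPP fronts $\phi_{c_\pm}$ of the limit equations and exploiting the integrability of $\zeta$ and $\tilde\zeta$ to keep the shifts bounded. The strict inequality $c_->2\sqrt{\mu_-}$ plays an essential role: it guarantees that the asymptotic decay rate $\kappa_-:=(c_--\sqrt{c_-^2-4\mu_-})/2$ is strictly less than $\sqrt{\mu_-}$, so that $\phi_{c_-}(\xi)\sim A\,e^{-\kappa_-\xi}$ has a clean simple-exponential tail, and it also controls the corresponding rate $\kappa_+=\min(\sqrt{\mu_+},\kappa_-)$ at $+\infty$.

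For the past convergence, I would use the ansatz $v^\pm(t,x)=\phi_{c_-}(x-c_-t+\sigma^\pm(t))$. From the travelling-wave identity $\phi_{c_-}''+c_-\phi_{c_-}'+f_-(\phi_{c_-})=0$, one computes
\[
\partial_t v^\pm-\partial_{xx}v^\pm-f(t,v^\pm)=\dot\sigma^\pm(t)\,\phi_{c_-}'+\bigl[f_-(v^\pm)-f(t,v^\pm)\bigr],
\]
and by \eqref{hypf-} the bracket is controlled by $\zeta(t)\,f_-(v^\pm)$. Since $-\phi_{c_-}'$ is comparable to $f_-(\phi_{c_-})$ on the tail (both proportional to $\phi_{c_-}$ to leading order) and is bounded away from $0$ on any compact set disjoint from $\{0,1\}$, choosing $\dot\sigma^\pm(t)=\pm K\zeta(t)$ for a suitable $K>0$ turns $v^+$ into a supersolution and $v^-$ into a subsolution; the integrability of $\zeta$ on $(-\infty,0)$ then keeps $\sigma^\pm$ bounded. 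To launch the comparison I need $v^-(t_0,\cdot)\le u(t_0,\cdot)\le v^+(t_0,\cdot)$ at some sufficiently negative $t_0$, which is equivalent to knowing that $u(t_0,\cdot)$ has an exponential tail of rate $\kappa_-$. I would obtain this by extracting a sequence $t_n\to-\infty$ along which $u(t_n,X(t_n)+\cdot)$ converges in $C^2_\loc(\R)$ (by uniform parabolic estimates) to an entire solution $U$ of $V_\tau=V_{yy}+f_-(V)$; then, using the already established fact that $X(t)/t\to c_-$ as $t\to-\infty$ together with the classification of transition fronts of the concave autonomous KPP equation (cf.\ \cite{hn2}) and the strict supercriticality $c_->2\sqrt{\mu_-}$, one identifies $U$ with a shift of $\phi_{c_-}$, providing the required tail information and completing the squeeze.

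The future convergence is treated analogously, with $\phi_{c_+}$ and shifts $\tilde\sigma^\pm$ satisfying $\dot{\tilde\sigma}^\pm=\pm K\tilde\zeta(t)$, bounded thanks to \eqref{hypf+}. The key input here is that the exponential decay rate of $u$ at $x\to+\infty$ has been determined to be $\kappa_+$: it is either preserved from the past rate $\kappa_-$ when $\kappa_-\le\sqrt{\mu_+}$, or relaxes to the critical rate $\sqrt{\mu_+}$ of $f_+$ in the opposite case, matching in both cases the travelling-wave relation $c_+=\kappa_++\mu_+/\kappa_+$ of Theorem~\ref{th1}. Once both convergences are in hand, the bounded function $\xi:\R\to\R$ can be patched continuously between the two half-lines, with $C^2$-strength of convergence automatic from parabolic regularity. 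The main obstacle I anticipate is precisely the Liouville-type step identifying the past limit $U$ with a standard front $\phi_{c_-}$: ruling out more exotic entire solutions of the concave autonomous KPP equation sharing the past speed $c_-$ but with a different profile is where the strict supercriticality and the concavity of $f_-$ are both essential.
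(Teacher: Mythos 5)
Your proposal addresses only the last assertion of Theorem~\ref{th2}, and even there it presupposes the parts of the theorem that carry the real difficulty. You invoke ``the already established fact that $X(t)/t\to c_-$ as $t\to-\infty$'' as an input, but the existence of that limit (as opposed to the a priori distinct liminf and limsup) is precisely one of the conclusions to be proved, as is the bound $c_+\ge\kappa+\mu_+/\kappa$ in~\eqref{cpm2ter}, which for $\mu_+>\mu_-$ is strictly stronger than the spreading bound $2\sqrt{\mu_+}$ and is the genuinely new content. The paper obtains all of this from a device absent from your plan: the multiplicative comparison $u(t,x)e^{-\mu_-\Theta(t)}\le v(t,x)\le u(t,x)e^{\mu_-\Theta(t)}$ of Lemma~\ref{lemuv}, where $v$ solves the \emph{autonomous} equation with nonlinearity $f_-$ and $\Theta(t)=\int_{-\infty}^t\zeta$. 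This reduces everything to the concave homogeneous equation, where the representation $v=u_\mu$ of~\cite{hn2} applies; the leftmost and rightmost points of $\mathrm{supp}\,\mu$ then yield the past speed, the exact exponential decay of $u(t,\cdot)$ at $+\infty$, and hence the future speed. Note in particular that the decay rate at $+\infty$ is $\lambda^-_{\tilde c_+}$, governed by the \emph{rightmost} point $\tilde c_+$ of $\mathrm{supp}\,\mu$, which may be strictly larger than $c_-$; your identification of the future rate with $\min(\kappa_-,\sqrt{\mu_+})$ conflates the inequality in~\eqref{cpm2ter} with an equality that is false in general. Compactness of $\mathrm{supp}\,\mu$ (Lemma~\ref{lemsupport}) is itself a nontrivial step, without which neither speed is controlled.

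The squeeze between $\phi_{c_\pm}(x-c_\pm t+\sigma^\pm(t))$ with $\dot\sigma^\pm=\pm K\zeta$ also has three concrete gaps. First, to initialize it you need a two-sided uniform bound $A^{-1}e^{-\kappa_-x}\le u(t_0,X(t_0)+x)\le Ae^{-\kappa_-x}$; your route to it (a locally uniform limit $U$ along $t_n\to-\infty$, identified with a translate of $\phi_{c_-}$) is circular, since the identification requires knowing the speed or decay of $u$, and in any case yields only local-in-$x$ information along a subsequence rather than the uniform tail bound the comparison principle needs. Second, trapping a solution between two \emph{bounded} translates of a KPP front does not imply convergence to a single translate (these fronts are not $L^\infty$-asymptotically stable under shifts); a Liouville/steepness theorem such as Theorem~3.5 of~\cite{bh1} is still required, which you flag as an obstacle but do not resolve. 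Third, the future-time squeeze fails outright whenever $c_+=2\sqrt{\mu_+}$ (which happens whenever $\lambda^-_{\tilde c_+}\ge\sqrt{\mu_+}$): there $u(0,x)$ decays like $e^{-\lambda^-_{\tilde c_+}x}$ while $\phi_{2\sqrt{\mu_+}}(x)\sim x\,e^{-\sqrt{\mu_+}x}$, so \emph{no} translate of $\phi_{c_+}$ lies below $u(0,\cdot)$ and the lower barrier cannot be placed. This is exactly why the paper abandons the two-sided squeeze in favor of Uchiyama's convergence theorems applied to the auxiliary solution $v^+$ of the $f_+$-equation, combined with the sandwich $\beta v^+\le u\le\min(\beta^{-1}v^+,1)$ and a final Liouville argument identifying the limit of $u$ along its level sets.
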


This result improves Theorem 1.7 of~\cite{hr}, which dealt with homogeneous equations. Two
of the main interests of Theorem~\ref{th2} are to prove the existence of the asymptotic past
and future speeds of any transition front connecting~$0$ and~$1$, provided
that~$\liminf_{t\to-\infty}X(t)/t$ is not critical (that is, larger than $2\sqrt{\mu_-}$) on one hand,
and to show the sharpness of the bounds~(\ref{cpm1}) for the asymptotic speeds on the other
hand. Thus, even if the family of fronts constructed in the proof of Theorem~\ref{th1} does not
exhaust the whole class of transition fronts connecting $0$ and~$1$, it completely covers the
range of possible asymptotic past and future speeds. Therefore the observations and the
comparisons between the speeds following the statement of Theorem~\ref{th1} apply to
arbitrary transition fronts connecting $0$ and $1$ for problem~\eq{P=f}.

When~$c_-:=\liminf_{t\to-\infty}X(t)/t$ is not critical, Theorem~\ref{th2} excludes in particular
the exis\-tence of more complex dynamics for which the set of limiting values of $X(t)/t$ as
$t\to-\infty$ and~$t\to+\infty$ would not be reduced to a singleton. The only open question is
the existence of the asymptotic speeds when $\liminf_{t\to-\infty}X(t)/t=2\sqrt{\mu_-}$. We
conjecture that the asymptotic speeds still exist in this case, and that~\eqref{eq:phi+-} holds
without any restrictions on $c_{\pm}$.

\begin{remark}\label{hypf+-}
{\rm The technical conditions~$(\ref{hypf-})$
and~$(\ref{hypf+})$
used in Theorem~$\ref{th2}$ mean that~$f(t,u)$ converges to $f_{\pm}(u)$ sufficiently fast as
$t\to\pm\infty$. We do not know whether~$(\ref{hypf-})$ is necessary or not for the first part of
Theorem~$\ref{th2}$ to hold, and $(\ref{hypf-})$ and~$(\ref{hypf+})$ for the
last part.
Notice that if, in addition to the hypotheses that $f$ and $f_{\pm}$ are of
class $C^1$ and satisfy~\eqref{eq:f/u},~\eqref{fpm} (which imply
$f'_{\pm}(0)>0$), one assumes that $f'_{\pm}(1)<0$ (which is automatically
fulfilled if $f_-$ is assumed to be concave) then the
functions~$\zeta_{\pm}:t\mapsto\sup_{u\in(0,1)}\big|f(t,u)/f_{\pm}(u)-1\big|$
are continuous in $\R$ and
the conditions~\eqref{hypf-} and~\eqref{hypf+} are therefore equivalent to $\zeta_{\pm}\in
L^1(\R_{\pm})$. This is immediately seen by noticing that the
functions~$(t,u)\mapsto f(t,u)/f_{\pm}(u)$ are continuously extended at $(t,0)$ by
$\partial_u f(t,0)/f_{\pm}'(0)$ and at $(t,1)$ by~$\partial_u f(t,1)/f_{\pm}'(1)$.
Lastly, a typical example for which these conditions~$(\ref{hypf-})$
and~$(\ref{hypf+})$ are fulfilled is when $f$ is of the type~$f(t,u)=\tilde{\mu}(t)\,\tilde{f}(u)$
with~$\lim_{t\to\pm\infty}\tilde{\mu}(t)=\tilde{\mu}_{\pm}\in(0,+\infty)$ and
$\tilde{\mu}-\tilde{\mu}_{\pm}\in L^1(\R_{\pm})$.}
\end{remark}


\subsection{The set of admissible asymptotic and global mean speeds}

We derive here an immediate corollary of Theorems~\ref{th1} and~\ref{th2},
and of formula~(\ref{cpm2bis}). It is about the characterization of the set of
all admissible asymptotic past and future speeds and global mean speeds of transition
fronts connecting $0$ and $1$ for problem~\eq{P=f}.

\begin{corollary}\label{cor1}
If~\eqref{eq:f/u},~\eqref{fpm},~\eqref{C1omega},~\eqref{hypf-} hold and $f_-$ is
of class~$C^2$ and concave, transition fronts connecting~$0$
and~$1$ and having asymptotic past and future speeds $c_{\pm}$ exist if and only
if~$c_-$ and~$c_+$ fulfill~$(\ref{cpm1})$, or equivalently~\eq{alternative}. Furthermore,
transition fronts connecting $0$ and $1$ and having a
global mean speed $\gamma$, in the sense of~\eqref{meanspeed}, exist if and only
if~$\mu_+\le\mu_-$ and~$\gamma\ge2\sqrt{\mu_-}$.
\end{corollary}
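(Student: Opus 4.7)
The plan is to deduce Corollary~\ref{cor1} by combining Theorems~\ref{th1} and~\ref{th2} with the asymptotic mean speed property~\eqref{cpm2bis}. For the first equivalence, the ``if'' direction is exactly the content of Theorem~\ref{th1}, which for any pair $(c_-,c_+)$ satisfying~\eqref{cpm1} produces a transition front with asymptotic past and future speeds $c_\pm$. The ``only if'' direction follows from Theorem~\ref{th2}: under the standing assumptions~\eqref{eq:f/u},~\eqref{fpm},~\eqref{hypf-} and the $C^2$-concavity of $f_-$, any transition front satisfies the inequalities $\liminf_{t\to-\infty}X(t)/t\ge 2\sqrt{\mu_-}$ and $\liminf_{t\to+\infty}X(t)/t\ge\kappa+\mu_+/\kappa$ recorded in~\eqref{cpm2ter}. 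If the front moreover admits asymptotic past and future speeds $c_\pm$, those liminfs coincide with the limits $c_\pm$, whence~\eqref{cpm1} holds.

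For the second equivalence, the ``only if'' direction is immediate from the first. If a transition front has global mean speed $\gamma$, then both asymptotic speeds exist and equal $\gamma$, so $c_-=c_+=\gamma$ must satisfy~\eqref{cpm1}; this already yields $\gamma\ge 2\sqrt{\mu_-}$. Moreover, if we had $\mu_+>\mu_-$, the strict chain~\eqref{acceleration} would force $c_+>c_-$, contradicting $c_-=c_+$. Hence $\mu_+\le\mu_-$.

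For the converse, fix $\mu_+\le\mu_-$ and $\gamma\ge 2\sqrt{\mu_-}$, and set $c_-=c_+=\gamma$. To verify~\eqref{cpm1}, write $\kappa_-=(\gamma-\sqrt{\gamma^2-4\mu_-})/2\in(0,\sqrt{\mu_-}]$ and note that $\kappa=\min(\sqrt{\mu_+},\kappa_-)$: if $\kappa=\kappa_-$ then $\kappa+\mu_+/\kappa\le\kappa_-+\mu_-/\kappa_-=\gamma$ by $\mu_+\le\mu_-$, while if $\kappa=\sqrt{\mu_+}$ then $\kappa+\mu_+/\kappa=2\sqrt{\mu_+}\le 2\sqrt{\mu_-}\le\gamma$. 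Theorem~\ref{th1} then produces a transition front with asymptotic past and future speeds both equal to $\gamma$, and since $\mu_+\le\mu_-$ rules out the exceptional case described after that theorem, property~\eqref{cpm2bis} applies to this front.

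The main obstacle is then to upgrade~\eqref{cpm2bis} to the full uniform convergence $\sup_{t\in\R}|(X(t+\tau)-X(t))/\tau-\gamma|\to 0$ as $\tau\to+\infty$ demanded by the definition~\eqref{meanspeed} of global mean speed. Indeed~\eqref{cpm2bis} only controls the ranges $t\le-\tau$ and $t\ge 0$, so the intermediate window $t\in(-\tau,0)$ must be treated by hand; note that for such $t$ the two non-negative lengths $t+\tau$ and $-t$ sum to $\tau$, so at least one of them tends to infinity with $\tau$. I would split $X(t+\tau)-X(t)=[X(t+\tau)-X(0)]+[X(0)-X(t)]$ and apply~\eqref{cpm2bis} at $s=0$ with increment $t+\tau$ and at $s=t$ with increment $-t$ to rewrite the two pieces as $\gamma(t+\tau)+o(\tau)$ and $\gamma(-t)+o(\tau)$ respectively whenever $t+\tau$ or $|t|$ is large, while invoking the uniformly bounded local oscillations~\eqref{Xtau} to absorb the piece whose length remains bounded. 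A short case split on the relative sizes of $t+\tau$ and $|t|$ then yields $X(t+\tau)-X(t)=\gamma\tau+o(\tau)$ uniformly in $t\in(-\tau,0)$, completing the proof.
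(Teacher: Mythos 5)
Your proposal is correct and follows essentially the same route as the paper: the two equivalences are deduced from Theorems~\ref{th1} and~\ref{th2}, the case $\mu_+>\mu_-$ is excluded via the strict inequality~\eqref{acceleration}, and the passage from~\eqref{cpm2bis} to a genuine global mean speed is handled by splitting the window $s<0<t$ at the origin and absorbing the bounded piece with~\eqref{Xtau}. The paper phrases this last step slightly differently, bounding $|X(t)-X(s)-\gamma(t-s)|$ by $|X(t)-\gamma t|+|X(s)-\gamma s|$ and using $X(t)/t\to\gamma$ as $t\to\pm\infty$, but this is the same argument as yours.
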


\noindent{\bf{Proof.}} The first sentence is an immediate consequence of Theorems~\ref{th1}
and~\ref{th2}.
Thus, a transition front connecting~$0$ and $1$ with a global
mean speed $\gamma$ exists only if $c_{\pm}:=\gamma$ satisfy~(\ref{cpm1}), or
equivalently~$c_{\pm}:=\gamma$ can be written in the
form~\eqref{eq:alternative}.
If~$\mu_+>\mu_-$, then $c_\pm$ in~\eqref{eq:alternative} always satisfy
$c_+>c_-$, as already emphasized
in~(\ref{acceleration}), and therefore a transition front with a global mean
speed cannot exist.

Suppose now that~$\mu_+\le\mu_-$ and take $\gamma\geq2\sqrt{\mu_-}$. Let us
show that $c_{\pm}:=\gamma$ can be written
in the form~\eqref{eq:alternative}.
The choice of $\kappa_-\in(0,\sqrt{\mu_-}]$ is uniquely determined by the
condition $c_-=\gamma$, i.e.,~$\kappa_-+\mu_-/\kappa_-=\gamma$. Then,
the equation
$\kappa_++\mu_+/\kappa_+=\gamma$ admits a
solution~$\kappa_+\in(0,\min(\kappa_-,\sqrt{\mu_+})]$ because the function
$\Gamma:\kappa\mapsto\kappa+\mu_+/\kappa$ is continuous on $(0,+\infty)$ and
satisfies~$\Gamma(0^+)=+\infty$,~$\Gamma(\kappa_-)=\kappa_- +\mu_+/\kappa_-\leq
\kappa_- +\mu_-/\kappa_-=\gamma$ and~$\Gamma(\sqrt{\mu_+})=2\sqrt{\mu_+}\leq
2\sqrt{\mu_-}\leq\gamma$.
It then follows from Theorem~\ref{th1} that there exists a transition front $u$
connecting~$0$ and~$1$ and having asymptotic past and future speeds both equal
to $\gamma$. Since $\mu_+\le\mu_-$, we know that the fronts
given by Theorem~\ref{th1} satisfy~\eqref{eq:phi+-}, which, in turn, yields
\eqref{cpm2bis}, as shown in Section \ref{sec24}.
This implies that, for the front $u$, \eqref{meanspeed} holds whenever $s$
and~$t$ have the same sign, whereas, in the case $s<0<t$, one writes
$$\left|\frac{X(t)-X(s)}{t-s}-\gamma\right|\leq
\frac{|X(t)-\gamma t|}{t-s}+\frac{|X(s)-\gamma s|}{t-s},$$
and readily sees that both terms in this sum go to $0$ as $t-s\to+\infty$ because
$X(t)/t\to\gamma$ as~$t\to\pm\infty$ by~\eqref{cpm2bis}, and $X$ is locally bounded
by~\eqref{Xtau}. The proof of Corollary~\ref{cor1} is thereby
complete.\hfill$\Box$


\subsection{A sufficient condition for an entire solution to be a transition front}

Our last main result provides a sufficient condition for an entire solution of~\eq{P=f} to be a
transition front.

\begin{theorem}\label{thm:decay}
Assume that $f$ satisfies~\eqref{eq:f/u},~\eqref{fpm},~\eqref{hypf-}  and~\eqref{hypf+},
with $f_-$ concave and in~$C^2([0,1])$. Let $0<u<1$ be a solution of~\eq{P=f} such
that
\be\label{hyphn}
\exists\,c>2\sqrt{\mu_-},\ \ \max_{[-c|t|,c|t|]}u(t,\cdot)\to0\ \hbox{ as }t\to-\infty.
\ee
Then the limit
$$\lambda:=-\lim_{x\to+\infty}\frac{\ln u(0,x)}{x}$$
exists, satisfies $\lambda\in\big[0,\sqrt{\mu_-}\big)$, and $u$ is a transition front connecting
$0$ and $1$ if and only if~$\lambda>0$. Furthermore, if $\lambda>0$, then the transition front
$u$ admits some asymptotic past and future speeds~$c_-$ and $c_+$ given by
\be\label{asympspeeds}\left\{\baa{l}
\displaystyle2\sqrt{\mu_-}<c_-=\sup\Big\{\gamma\ge0,\ \lim_{t\to-\infty}\max_{[-\gamma|t|,\gamma|t|]}u(t,\cdot)
=0\Big\},\vspace{3pt}\\
\displaystyle c_+=\min(\lambda,\sqrt{\mu_+})+\frac{\mu_+}{\min(\lambda,\sqrt{\mu_+})},
\eaa\right.
\ee
and~\eqref{eq:phi+-} holds for some bounded function $\xi:\R\to\R$.
\end{theorem}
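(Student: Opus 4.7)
The plan is to extract from hypothesis~\eqref{hyphn} sufficient control on~$u(0,\cdot)$ at~$+\infty$ to define the decay rate~$\lambda$, characterize the property $\lambda>0$ as being a transition front, and then apply Theorem~\ref{th2} to obtain the asymptotic speeds and profile convergence. For the existence of $\lambda\in[0,\sqrt{\mu_-})$, I would exploit the concavity of~$f_-$ and the $L^1$ assumption~\eqref{hypf-} to perform sliding comparisons of~$u$ with shifted critical and supercritical traveling fronts~$\phi_c$ of the limiting equation $u_t=u_{xx}+f_-(u)$. Any subcritical exponential tail of $u(0,\cdot)$ can be tested against $\phi_{c(\lambda)}(x-c(\lambda)t)$ with $c(\lambda)=\lambda+\mu_-/\lambda$, and the parabolic maximum principle, combined with the uniform smallness of $u$ on the moving cone $[-c|t|,c|t|]$ from~\eqref{hyphn}, yields the strict upper bound $\lambda<\sqrt{\mu_-}$: a critical or steeper tail would force $u$ to lie above a backward-shifted critical profile at time~$-T$, contradicting~\eqref{hyphn} for $c>2\sqrt{\mu_-}$. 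The existence of the sharp limit~$\lambda$ is then a KPP rigidity statement: if the $\liminf$ and $\limsup$ of $-\ln u(0,x)/x$ differed, two exponential tails with different rates would propagate forward in~\eq{P=f} at different speeds $\kappa+\mu_+/\kappa$, producing incompatible asymptotics controlled via~\eqref{hypf+} and the parabolic maximum principle.

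For the equivalence ``$u$ is a transition front iff $\lambda>0$'', when $\lambda>0$ the exponential tail $u(0,x)\le Ce^{-\lambda x/2}$ propagates in both time directions (via comparison with moving exponential barriers), yielding $u\to0$ uniformly at $x=+\infty$; combined with~\eqref{hyphn} and the KPP spreading of~$u$ to~$1$ in the wake (obtained from any point where $u$ is bounded below away from~$0$), this gives the uniform limits in~\eqref{gtf}. Conversely, if $\lambda=0$ then $u(0,x_n)\ge e^{-x_n/n}$ along some $x_n\to+\infty$, and the KPP forward-spreading lemma applied to such a subexponential profile forces the level sets of $u(t,\cdot)$ to travel at speeds tending to $+\infty$ in positive time, violating~\eqref{Xtau}.

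When $\lambda>0$, hypothesis~\eqref{hyphn} implies $c_-\ge c>2\sqrt{\mu_-}$, where $c_-$ is the supremum in~\eqref{asympspeeds} and coincides with $\liminf_{t\to-\infty}X(t)/t$ by direct comparison of~$X(t)$ with the cone boundaries. Theorem~\ref{th2} then applies: $c_\pm$ exist as true limits, $c_+\ge\kappa+\mu_+/\kappa$ with $\kappa=\min(\sqrt{\mu_+},(c_--\sqrt{c_-^2-4\mu_-})/2)$, and~\eqref{eq:phi+-} holds thanks to~\eqref{hypf+}. The $C^2$ convergence to $\phi_{c_-}$ as $t\to-\infty$ transfers the tail rate from $u(0,\cdot)$ to $\phi_{c_-}$, which has exponential decay $(c_--\sqrt{c_-^2-4\mu_-})/2$ at~$+\infty$, forcing $\lambda=(c_--\sqrt{c_-^2-4\mu_-})/2$ and hence $\kappa=\min(\lambda,\sqrt{\mu_+})$. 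The matching upper bound for~$c_+$ follows from the $C^2$ convergence to~$\phi_{c_+}$ as $t\to+\infty$, whose decay at $+\infty$ is $\min(\lambda,\sqrt{\mu_+})$ by subcritical-rate conservation (if $\lambda\le\sqrt{\mu_+}$) or saturation at $\sqrt{\mu_+}$ otherwise, producing the stated formula for~$c_+$. The main obstacle is the existence of the sharp limit~$\lambda$: bounds on $\liminf$ and $\limsup$ follow from KPP comparison, but pinning them to a single value demands a delicate zero-number / intersection-comparison analysis specific to concave KPP reactions, leveraging~\eqref{hypf-}--\eqref{hypf+} to control the $L^1$ deviation of $f$ from $f_\pm$.
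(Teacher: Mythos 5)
Your overall architecture (define $\lambda$, show $\lambda<\sqrt{\mu_-}$, equate ``transition front'' with $\lambda>0$, then feed into Theorem~\ref{th2}) matches the shape of the result, and your treatment of the case $\lambda=0$ (subexponential tails force unbounded level-set speeds, contradicting~\eqref{Xtau}) is exactly the paper's argument. But the central step --- the \emph{existence} of the limit $\lambda$ --- is left open in your proposal, and the mechanism you sketch for it would not close the gap. Forward propagation cannot distinguish a $\liminf$ from a $\limsup$ of $-\ln u(0,x)/x$: the spreading speed generated by an initial tail is governed by its flattest part, so two different ``rates'' along different subsequences produce no incompatible asymptotics. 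The paper does not use a zero-number argument here either. It first transfers the problem to the \emph{autonomous} equation $v_t=v_{xx}+f_-(v)$ by constructing $v=\lim v_n$ with $v_n(-n,\cdot)=u(-n,\cdot)$ and proving the two-sided bound $u\,e^{-\mu_-\Theta(t)}\le v\le u\,e^{\mu_-\Theta(t)}$ (Lemma~\ref{lemuv}, which is where the $L^1$ hypothesis~\eqref{hypf-} is consumed); hypothesis~\eqref{hyphn} then places $v$ in the scope of the Hamel--Nadirashvili classification, $v=u_\mu$ for a measure $\mu$ supported outside $(-c,c)$, and the existence of the decay rate $\lambda\in[0,\sqrt{\mu_-})$ is read off from the structure of $u_\mu$ (Theorem~1.9 of~\cite{hr}). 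This is precisely where the concavity and $C^2$ regularity of $f_-$ are needed, and it is an input about \emph{entire (ancient) solutions}, not a forward-in-time rigidity.

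There is also a concrete error in your identification of $\lambda$. You claim that the $C^2$ convergence $u(t,X(t)+\xi(t)+\cdot)\to\phi_{c_-}$ as $t\to-\infty$ ``transfers the tail rate,'' forcing $\lambda=(c_--\sqrt{c_-^2-4\mu_-})/2$. Uniform convergence of profiles says nothing about the decay rate deep in the tail, and the conclusion is false: in the representation $v=u_\mu$ the decay rate is $\lambda=\lambda^-_{\tilde c_+}$ where $\tilde c_+$ is the \emph{rightmost} point of $\mathrm{supp}\,\mu$ (Lemma~\ref{lemmev}), whereas $c_-$ is the leftmost point; these differ whenever the support is not a singleton. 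The fronts of~\eqref{transitionhn} with $c_1<c_2$ already give a counterexample: the past speed is $c_1$ but the spatial tail decays like $e^{-\lambda_{c_2}x}$ for every fixed $t$. Since the statement~\eqref{asympspeeds} expresses $c_+$ through $\lambda$ (i.e.\ through $\tilde c_+$) and $c_-$ through the cone condition, your identification collapses the exact value of $c_+$ onto the mere lower bound $\kappa+\mu_+/\kappa$ of Theorem~\ref{th2} and is internally inconsistent with your own subsequent ``rate conservation'' claim. The correct route is the paper's: determine $\lambda$ from $\tilde c_+$ via Lemma~\ref{lemmev}, then obtain $c_+=\min(\lambda,\sqrt{\mu_+})+\mu_+/\min(\lambda,\sqrt{\mu_+})$ by comparing $u$ for $t>0$ with the solution $v^+$ of the $f_+$-equation issued from $v(0,\cdot)$ and applying Uchiyama's spreading results, which require the \emph{exact} tail asymptotics of $v(0,\cdot)$, not just two-sided exponential bounds.
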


The above result provides a characterization of transition fronts, in the class of entire
solutions~$0<u<1$ satisfying~\eqref{hyphn}, in terms of the profile of $u$ at time $0$ (or,
equivalently after shifting times, at another arbitrary time $t_0$; as a matter of fact, we show in
the proof of Theo\-rem~\ref{thm:decay} that $\ln u(t,x)\sim-\lambda x$ as~$x\to+\infty$ for all
$t\in\R$, with $\lambda\in[0,2\sqrt{\mu_-})$ independent of~$t$). The hypothesis~\eqref{hyphn}
is used to apply some results of~\cite{hn2}. This is not such a restrictive assumption in
general, because the limit in~\eqref{hyphn} automatically holds for any $0\le c<2\sqrt{\mu_-}$,
as an easy consequence of the spreading result.

\begin{remark}{\rm In~$\cite{nrrz}$, the authors consider the reaction-diffusion~\eq{P=f} with
$x$-dependent KPP type nonlinearities $f(x,u)$, instead of time-dependent ones $f(t,u)$.
Among other things they prove the following striking result: if, say,
$f_u(x,0):=\frac{\partial f}{\partial u}(x,0)\to m\in(0,+\infty)$ as~$x\to\pm\infty$
and~$f_u(\cdot,0)-m$ is nonnegative and compactly supported, then transition fronts
connec\-ting~$0$ and~$1$, in the sense of~\eqref{gtf}, do not exist if $\lambda>2m$, where
$\lambda$ is the supremum of the spectrum of the operator $\partial_{xx}+f_u(x,0)$. On the
other hand, transition fronts with global mean speed~$\gamma$ exist
when~$2m>\lambda(\ge m)$ for every speed
$\gamma\in(2\sqrt{m},\lambda/\sqrt{\lambda-m})$ $($the existence for the critical speeds is
unclear$)$. More general $x$-dependent equations have been considered in~$\cite{z1}$ and
more general existence results have been obtained under a similar smallness condition for a
quantity which is similar to $\lambda$. The time and space variables obviously play a different
role in equations of the type~\eq{P=f}. But if we wanted to make an analogy between the
transition fronts for~$x$ or $t$-dependent equations, we could make the following two
comparisons. Firstly, transition fronts always exist for our time-dependent equation~\eq{P=f}
whereas they do not exist in general for the associated $x$-dependent one. Secondly, under
the assumption~$f_u(t,0)\to\mu=\mu_{\pm}$ as~$t\to\pm\infty$ $($or more generally when
$\mu_+\le\mu_-)$ in~\eq{P=f}, transition fronts with global mean speeds always exist,
whatever the temporal range of the transition between the limiting profiles $f_{\pm}$ may be
and whatever the amplitude of $f(t,\cdot)$ for the intermediate times $t$ may be, and the set of
admissible speeds is a closed semi-infinite interval including the critical speed, whereas
transition fronts may not exist in general for the $x$-dependent equation and, even if they
exist, the set of admissible global mean speeds c a bounded interval.}
\end{remark}


\subsection{Time-dependent diffusivity}

We conclude this first section by showing that changing the time-variable, one can extend
Theorems~\ref{th1} and~\ref{th2} to equations with time-dependent diffusivities. Namely, we
consider the equation
\Fi{sigmaP=f}
u_t=\sigma(t)u_{xx}+f(t,u),\quad t\in\R,\ x\in\R,
\Ff
with $\sigma\in C^1(\R)$ being bounded from below away from $0$. Writing
$\t u(t,x)\!=\!u(\tau^{-1}(t),x)$ with~$\tau(t)\!:=\!\int_0^t\sigma(s)ds$, leads to the equation
\Fi{P=t f}
\t u_t=\t u_{xx}+\frac{f(\tau^{-1}(t),\t u)}{\sigma(\tau^{-1}(t))},\quad t\in\R,\ x\in\R.
\Ff
We can apply Theorems~\ref{th1} and~\ref{th2} to this equation, provided
the nonlinear term satisfies the hypotheses there, and then derive a
characterization of the asymptotic past and future speeds of transition fronts
connecting $0$ and $1$ for~\eqref{eq:P=t f}. Notice that $\t u$ is a transition
front for~\eqref{eq:P=t f} satisfying~\eqref{gtf} with $X =\t X$ if and only if
$u(t,x)=\t u(\tau(t),x)$ is a transition front for~\eqref{eq:P=f} with~$X(t)=\t
X(\tau(t))$. Therefore, if $\t u$ has past and future speeds equal to $\t c_\pm$
then $u$ has past and future speeds equal to
\Fi{tcpm}
c_\pm:=\lim_{t\to\pm\infty}\frac{X(t)}t=\lim_{t\to\pm\infty}\frac{\t X(\tau(t))}{\tau(t)}\frac{\tau(t)}t
=\t c_\pm\lim_{t\to\pm\infty}\frac1t\int_0^t\sigma(s)ds,
\Ff
provided the latter exist (notice that $\tau(t)\to\pm\infty$ as
$t\to\pm\infty$ because $\inf\sigma>0$).

A situation where these arguments apply is when $f$ satisfies the
hypotheses~\eqref{eq:f/u},~\eqref{fpm},~\eqref{C1omega} and~\eqref{hypf-} of
Theorems~\ref{th1} and~\ref{th2} and $\sigma\in C^1(\R)$ is such that
\Fi{sigma}
\sigma>0\hbox{ in }\R,\ \ \sigma(t)\to\sigma_\pm>0\text{ as }t\to\pm\infty\ \ \text{and}\ \
t\mapsto(\sigma(t)-\sigma_-)\in L^1(-\infty,0).
\Ff
Indeed, the new nonlinear term $\tilde{f}(t,u)=f(\tau^{-1}(t),u)/\sigma(\tau^{-1}(t))$
satisfies~\eqref{eq:f/u} and~\eqref{fpm} with $f_\pm$ replaced by $f_\pm/\sigma_\pm$,
as well as~\eqref{C1omega} with $C$ replaced by $C/\inf\sigma$. It also fulfils the
hypothesis~\eqref{hypf-} of Theorem~\ref{th2}, since
$$\sup_{s\in(0,1)}\left|\frac{f(\tau^{-1}(t),s)}{\sigma(\tau^{-1}(t))}
\frac{\sigma_-}{f_-(s)} -1\right|\leq\frac{\sigma_-}{\inf\sigma}\zeta(\tau^{-1}(t))+
\left|\frac{\sigma_-}{\sigma(\tau^{-1}(t))}-1\right|,$$
and this term belongs to $L^1(-\infty,0)$ because $\zeta$ does and $\int_{-\infty}^0\left|\sigma_-/\sigma(\tau^{-1}(t))-1\right|dt
=\int_{-\infty}^0|\sigma_--\sigma(z)|\,dz<\infty$.
One can therefore derive the results for~\eqref{eq:sigmaP=f} from the ones
for~\eqref{eq:P=t f}, noticing that, in virtue of~\eqref{eq:tcpm}, when coming back to the
original time-variable, the asymptotic speeds are multiplied by $\sigma_\pm$. For instance,
applying Corollary~\ref{cor1} we can characterize the admissible past and future speeds
$\t c_\pm$ for~\eqref{eq:P=t f} by~$\t c_-\geq 2\sqrt{\mu_-/\sigma_-}$ and
$\t c_+\geq\kappa+\mu_+/(\kappa\sigma_+)$, where
$\kappa=\min\big(\sqrt{\mu_+/\sigma_+},\big(\t c_--\sqrt{\t c_-^2-4\mu_-/\sigma_-}\big)/2\big)$,
and thus derive the following:

\begin{corollary}\label{corsigma}
Assume that $f$ satisfies~\eqref{eq:f/u},~\eqref{fpm},~\eqref{C1omega} and~\eqref{hypf-}
and that $\sigma\in C^1(\R)$ satisfies~\eqref{eq:sigma}. Then transition fronts connecting $0$
and~$1$ for~\eqref{eq:sigmaP=f} having asymptotic past and future speeds $c_{\pm}$ exist if
and only $c_-\geq 2\sqrt{\sigma_-\mu_-}$ and $c_+\geq\kappa+(\sigma_+\mu_+)/\kappa$,
where $\kappa=\min\big(\sqrt{\sigma_+\mu_+},(\sigma_+/\sigma_-)\times\big(c_--\sqrt{c_-^2-4\sigma_-\mu_-}\big)/2\big)$.
\end{corollary}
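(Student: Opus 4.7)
The plan is to reduce to the constant-diffusivity equation~\eqref{eq:P=t f} via the time-change $\tau(t):=\int_0^t\sigma(s)\,ds$, apply Corollary~\ref{cor1} there, and then translate the admissible speeds back through the relation $c_\pm=\sigma_\pm\tilde c_\pm$ furnished by~\eqref{eq:tcpm}.

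First I would note that $\tau:\R\to\R$ is a $C^2$ increasing bijection, since $\sigma\in C^1(\R)$ is bounded away from $0$ by~\eqref{eq:sigma}; the substitution $\tilde u(t,x)=u(\tau^{-1}(t),x)$ turns~\eqref{eq:sigmaP=f} into~\eqref{eq:P=t f} with $\tilde f(t,u)=f(\tau^{-1}(t),u)/\sigma(\tau^{-1}(t))$. The verification that $\tilde f$ fulfils~\eqref{eq:f/u},~\eqref{fpm} (with limit profiles $f_\pm/\sigma_\pm$ and linearization exponents $\mu_\pm/\sigma_\pm$),~\eqref{C1omega} and~\eqref{hypf-} has already been carried out in the discussion preceding the statement; moreover $f_-/\sigma_-$ inherits $C^2$ regularity and concavity from $f_-$ because $\sigma_-$ is a positive constant. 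A standard Ces\`aro argument based on~\eqref{eq:sigma} yields $\tau(t)/t\to\sigma_\pm$ as $t\to\pm\infty$, and in particular $\tau(t)\to\pm\infty$.

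The correspondence $\tilde u\leftrightarrow u$ is a bijection between transition fronts for the two equations, with position functions related by $\tilde X(t)=X(\tau^{-1}(t))$, so by~\eqref{eq:tcpm} a front $\tilde u$ with past/future speeds $\tilde c_\pm$ corresponds to a front $u$ with past/future speeds $c_\pm=\sigma_\pm\tilde c_\pm$, and conversely. Corollary~\ref{cor1} applied to~\eqref{eq:P=t f} then asserts that such fronts exist if and only if
\[
\tilde c_-\ge 2\sqrt{\mu_-/\sigma_-},\qquad \tilde c_+\ge\tilde\kappa+\frac{\mu_+/\sigma_+}{\tilde\kappa},
\]
with $\tilde\kappa=\min\bigl(\sqrt{\mu_+/\sigma_+},\,(\tilde c_--\sqrt{\tilde c_-^2-4\mu_-/\sigma_-})/2\bigr)$.

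The last step is a purely algebraic substitution. Multiplying the first inequality by $\sigma_-$ gives $c_-\ge 2\sqrt{\sigma_-\mu_-}$. Setting $\kappa:=\sigma_+\tilde\kappa$, the identities $\sigma_+\tilde\kappa=\kappa$ and $\mu_+/\tilde\kappa=\sigma_+\mu_+/\kappa$ convert the second inequality into $c_+\ge\kappa+\sigma_+\mu_+/\kappa$. Using $\tilde c_-=c_-/\sigma_-$ one checks that $\sqrt{\tilde c_-^2-4\mu_-/\sigma_-}=\sqrt{c_-^2-4\sigma_-\mu_-}/\sigma_-$ and $\sigma_+\sqrt{\mu_+/\sigma_+}=\sqrt{\sigma_+\mu_+}$, so that the formula for $\tilde\kappa$ transforms exactly into the claimed expression for $\kappa$. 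I do not anticipate any genuine obstacle: the most delicate point is ensuring that the integrability hypothesis~\eqref{hypf-} survives the time change, and this is precisely why~\eqref{eq:sigma} includes the condition $\sigma-\sigma_-\in L^1(-\infty,0)$.
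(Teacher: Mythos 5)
Your proof is correct and takes essentially the same route as the paper, which derives Corollary~\ref{corsigma} in the discussion immediately preceding its statement: the time change $\tau(t)=\int_0^t\sigma(s)\,ds$, the verification that the transformed nonlinearity satisfies the hypotheses of Corollary~\ref{cor1} (including the survival of~\eqref{hypf-} thanks to $\sigma-\sigma_-\in L^1(-\infty,0)$), and the speed relation~\eqref{eq:tcpm}. Your algebraic translation via $\kappa=\sigma_+\tilde\kappa$ reproduces exactly the stated formula.
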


\noindent{\bf{Outline of the paper.}} Section~\ref{sec:ex} is concerned with the proof of
Theorem~\ref{th1}. More precisely, Section~\ref{sec:super-critical} deals with the existence of
transition fronts of~\eq{P=f} for non-critical asymptotic speeds~$c_{\pm}$, while the critical
cases are considered in Section~\ref{sec:critical}. Section~\ref{sec3} is devoted to the proofs
of Theorems~\ref{th2} and~\ref{thm:decay}: after recalling in
Section~\ref{sec31} some known
useful results on the transition fronts in the homogeneous case, we
prove in Section~\ref{sec32} the a priori bounds on the asymptotic speeds of any transition
front connecting $0$ and~$1$ for~\eq{P=f}, as well as the asymptotic behavior of
all non-critical fronts. Lastly, Theorem~\ref{thm:decay}
is proved in Section~\ref{sec33}.


\SE{Existence of transition fronts: proof of Theorem~\ref{th1}}\label{sec:ex}

This section is dedicated to the proof of Theorem~\ref{th1}. In Section~\ref{sec:super-critical},
we derive the existence of transition fronts having supercritical past and future speeds
$c_\pm$, that is, such that $c_\pm>2\sqrt{\mu_\pm}$. Recall that the set of asymptotic past
and future speeds $c_\pm$ satisfying~\eqref{cpm1} can be expressed by~\eq{alternative}, with
$(\kappa_-,\kappa_+)$ belonging to the set
$$\mc{K}:=\big\{(k_1,k_2);\ 0<k_1\leq\sqrt{\mu_-},\
0<k_2\leq\min(k_1,\sqrt{\mu_+})\big\},$$
see~Figure \ref{fig:Kappa}.
Supercritical speeds are the ones for which $(\kappa_-,\kappa_+)\in\mc{K}$ and
$\kappa_\pm<\sqrt{\mu_\pm}$. Actually, the case~$\kappa_-=\kappa_+<\sqrt{\ul\mu}=
\min(\sqrt{\mu_-},\sqrt{\mu_+})$, that is the oblique open segment in Figure~\ref{fig:Kappa},
has been treated in~\cite{NR1}. In Section~\ref{sec:super-critical} of the present paper,
we will construct a supercritical front associated with any
choice of $0<\kappa_-<\sqrt{\mu_-}$ and~$0<\kappa_+<\min(\kappa_-,\sqrt{\mu_+})$ using
two distinct fronts of~\cite{NR1}.

In Section~\ref{sec:critical}, we deal with the case where at least one between $c_-$ and
$c_+$ is critical. We start with~$c_->2\sqrt{\mu_-}$ and $c_+=2\sqrt{\mu_+}$. Next, we make
use of the critical front provided by the recent paper~\cite{n2}, which, roughly speaking, has
the slowest admissible past and future speeds. In particular, being slower than any front with
supercritical speed, its past and future speeds~$c_\pm$ satisfy the equalities
in~\eqref{cpm1}, namely~$c_-=2\sqrt{\mu_-}$ and $c_+=\sqrt{\ul\mu}+\mu_+/\sqrt{\ul\mu}$.
Finally, using the critical front and the same method as in Section~\ref{sec:super-critical}, we
construct fronts such that $c_-=2\sqrt{\mu_-}$ and $c_+$ satisfies the strict inequality
$c_+>\sqrt{\ul\mu}+\mu_+/\sqrt{\ul\mu}$ in~\eqref{cpm1}.

To summarize, the construction of the fronts corresponding to the different
portions of the set $\mc{K}$ in Figure \ref{fig:Kappa} is derived in:
\begin{itemize}
 \item \cite{NR1} (see also Proposition \ref{pro:NR1} below): the oblique open
segment;
 \item Section \ref{sec:super-critical}: the interior of $\mc{K}$;
 \item Section \ref{sec:BC}:  the segment $(BC]$ (in the case $\mu_+<\mu_-$),
without the point $B$;
 \item Section \ref{sec:B}:  the point $B$;
 \item Section \ref{sec:AB}:  the segment $(AB)$;
\end{itemize}

In Section~\ref{sec23}, we show some exponential lower bounds which are used in
the
construction of the transition front with critical asymptotic speeds and which are also of
independent interest. Finally, the slightly stronger properties~\eqref{cpm2bis}
are proved in
Section~\ref{sec24}.


\subsection{Interior of $\mc{K}$ : supercritical
speeds}\label{sec:super-critical}

We will make use of the existence result of~\cite{NR1}, Theorem~2.3 part~1). That result
applies to a general time-dependent nonlinearity $f$ and it is expressed in terms of the {\em
least mean} of the function $\mu(t):=\partial_uf(t,0)$,
see Definition~2.2 in~\cite{NR1}. Under the hypotheses~\eqref{fpm},~\eqref{defmupm}
considered in the present paper, the least mean of $\mu$ coincides with $\ul\mu=
\min(\mu_-,\mu_+)$. This is a consequence of the fact that~$\mu(t)\to\mu_\pm$ as
$t\to\pm\infty$, which, in turn, follows immediately from~\eqref{fpm}, after writing $f(t,u)/u=(f(t,u))/f_\pm(u))\times(f_\pm(u)/u)$.
Actually, in~\cite{NR1} it is assumed that $f(t,u)>0$ for $u\in(0,1)$. However, it is shown
in~\cite{rr} - where more general coefficients depending also on $x$ are considered - that the
construction in~\cite{NR1} works only requiring that~$f(\.,u)$ is nonnegative and has positive
least mean for any $u\in(0,1)$. Under the hypothesis~\eqref{fpm}, the least mean of $f(\.,u)$ is
equal to $\min(f_-(u),f_+(u))$, and then it is positive for $u\in(0,1)$. Thus, Theorem 1.3
in~\cite{rr} yields the following

\begin{proposition}\label{pro:NR1}
Assume that $f$ satisfies~\eqref{eq:f/u},~\eqref{C1omega}, that $\mu(t)=\partial_u f(t,0)$
admits positive limits~$\mu(\pm\infty)$ as $t\to\pm\infty$ and that, for every $u\in(0,1)$, the
least mean of $f(\cdot,u)$ is positive. Then, for every $\kappa\in\big(0,\sqrt{\min(\mu(-\infty),
\mu(+\infty))}\big)$, there exists a transition front $u$ connecting~$0$ and~$1$ such that
\Fi{Xnr1}
X(t)=\int_0^t \left(\kappa+\frac{\mu(s)}{\kappa}\right)ds\ \hbox{ for all }t\in\R.
\Ff
Furthermore, $u_x(t,x)<0$ for all $(t,x)\in\R\times\R$ and
\Fi{expdecay}
u(t,x+X(t))\,e^{\kappa x}\to1\text{ as }x\to+\infty,\quad\text{uniformly in }t\in\R.
\Ff
\end{proposition}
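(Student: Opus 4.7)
The plan is to obtain the front as a monotone limit of Cauchy problems posed at earlier and earlier initial times, with initial data matched to an explicit exponential supersolution of the linearized equation at $0$. This is the classical Berestycki-Hamel/Nadin-Rossi construction in the KPP setting.

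First, I would identify the natural exponential profile. The linearized equation $v_t=v_{xx}+\mu(t)v$ is solved explicitly by
\[
v(t,x):=e^{-\kappa(x-X(t))},\qquad X(t)=\int_0^t\!\Big(\kappa+\frac{\mu(s)}{\kappa}\Big)ds,
\]
as a direct computation shows. The KPP assumption in \eqref{eq:f/u} gives $f(t,u)\le\mu(t)u$ for all $u\in[0,1]$, so $v$ is a classical supersolution of \eqref{eq:P=f} wherever $v\le1$, and $\bar v:=\min(1,v)$ is a generalized supersolution on $\R\times\R$. Let $u_n$ denote the solution of \eqref{eq:P=f} on $(-n,+\infty)\times\R$ with initial datum $u_n(-n,\cdot)=\bar v(-n,\cdot)$. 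By the comparison principle, $0\le u_n\le\bar v$, and monotonicity of initial data in $x$ is preserved, so $(u_n)_x\le 0$.

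Next, to guarantee that the limit is nontrivial, I would build a subsolution from below using the regularity assumption \eqref{C1omega}. For suitably chosen $\e>0$ small and $A$ large, the function
\[
\underline v(t,x):=\max\!\Big(0,\;v(t,x)-A\,e^{-(\kappa+\e)(x-X(t))}\Big)
\]
is a subsolution of \eqref{eq:P=f}: indeed \eqref{C1omega} yields $f(t,u)\ge\mu(t)u-Cu^{1+\omega}$, and one verifies that the correction $A e^{-(\kappa+\e)(x-X(t))}$ absorbs the $Cu^{1+\omega}$ error provided $\e<\min(\omega\kappa,\sqrt{\min(\mu_\pm)}-\kappa)$ and $A$ is large enough. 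Standard comparison then gives $u_n\ge\underline v$ on the set where $\underline v>0$, which in turn pins $u_n(t,X(t)+x)$ between two exponential profiles as $x\to+\infty$, both equivalent to $e^{-\kappa x}$, uniformly in $n$ and $t\ge -n$.

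I would then pass to the limit. Parabolic estimates (the uniform Hölder bound on $f$ and boundedness of $\partial_u f$) let me extract a locally uniform limit $u:\R\times\R\to[0,1]$, classical and entire, satisfying $\underline v\le u\le\bar v$, $u_x\le0$, and hence \eqref{eq:expdecay} directly from the sandwich. Applying the strong maximum principle to the linear equation solved by $u_x$ upgrades this to $u_x<0$ everywhere, and to $0<u<1$.

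The main obstacle is the uniform limit $u(t,X(t)+x)\to 1$ as $x\to-\infty$; this is where the \emph{positive least mean} of $f(\cdot,u)$ enters decisively. Here I would follow the strategy of \cite{NR1,rr}: for any $\theta\in(0,1)$ and any $t_0\in\R$, compare $u$ from below with a compactly supported bump subsolution of the form $\alpha\, g(x-X(t))$, where $g\ge0$ is a principal eigenfunction on a sufficiently large interval. The positivity of the least mean of $f(\cdot,s)$ for each $s\in(0,1)$, together with the uniform lower bound on $u(t,X(t))$ coming from $\underline v$, forces such subsolutions to grow up to $\theta$ on increasingly large intervals to the left of $X(t)$, with length uniform in $t_0$ because the least mean is a time-translation invariant quantity. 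This yields the required uniformity and shows that $u$ is a transition front in the sense of Definition \ref{def1}, with the position function given exactly by \eqref{eq:Xnr1}. The monotonicity $u_x<0$ and the decay \eqref{eq:expdecay} follow as already indicated.
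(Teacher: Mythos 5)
Your outline reproduces, in its broad lines, the construction of~\cite{NR1} and~\cite{rr} that the paper itself does not reprove: Proposition~\ref{pro:NR1} is imported there from Theorem~2.3 of~\cite{NR1} and Theorem~1.3 of~\cite{rr}, and the paper only recalls the approximating scheme (solutions issued at time $-n$ from the truncated exponential $\min(e^{-\kappa(x-x_n)},1)$) in Section~\ref{sec:BC}. The supersolution $\min\big(1,e^{-\kappa(x-X(t))}\big)$, the limit of Cauchy problems, the sandwich yielding~\eqref{eq:expdecay}, the monotonicity and the bump-subsolution argument for the uniform limit $1$ on the left are all consistent with those references.

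There is, however, one step that fails as written: the lower barrier $\underline v=\max\big(0,\,e^{-\kappa(x-X(t))}-A\,e^{-(\kappa+\e)(x-X(t))}\big)$ with a \emph{constant} amplitude $A$. Setting $z=x-X(t)$ and using $X'(t)=\kappa+\mu(t)/\kappa$ together with $f(t,s)\ge\mu(t)s-Cs^{1+\omega}$, the subsolution defect on the positivity set is bounded by
$$\underline v_t-\underline v_{xx}-f(t,\underline v)\;\le\;-A\,\e\Big(\frac{\mu(t)}{\kappa}-\kappa-\e\Big)e^{-(\kappa+\e)z}+C\,\underline v^{\,1+\omega},$$
so the sign you need requires $\mu(t)>\kappa(\kappa+\e)$ \emph{pointwise in} $t$. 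The hypotheses only control $\mu(\pm\infty)$ and least means; at intermediate times $\mu(t)$ may be arbitrarily small or zero (the standing assumptions explicitly allow $f(t,\cdot)\equiv0$ for $t$ in a bounded set), and at such times no choice of $A$ or $\e$ restores the inequality, since the first term on the right then has the wrong sign while the second is positive. This is precisely where the least-mean structure must enter the \emph{lower} bound, and not only the $x\to-\infty$ analysis as you suggest: one replaces $A$ by a bounded time-dependent amplitude $A\,e^{\zeta(t)}$, with $\zeta'(t)$ chosen to compensate the instantaneous deficit $\kappa(\kappa+\e)-\mu(t)$, the boundedness of $\zeta$ being guaranteed exactly because the least mean of $\mu$ (here equal to $\min(\mu(-\infty),\mu(+\infty))$) exceeds $\kappa(\kappa+\e)$ for $\e$ small. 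With that correction the two-sided exponential bound, hence~\eqref{eq:expdecay} and the rest of your argument, goes through.
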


\begin{remark}\label{rk:NR1}{\rm The fronts constructed in~\cite{NR1} still
satisfy~\eqref{eq:Xnr1}, even when~\eqref{fpm} does not hold (and $\mu$ is bounded). Hence,
they admit a global mean speed $\gamma$ if and only if
$(1/t)\int_s^{s+t}\!\!\mu(\tau)d\tau\to(\gamma-\kappa)\,\kappa$ as $t\to\pm\infty$
uniformly in $s\in\R$.
In order to admit past and future speeds it is instead sufficient that the above limits exist, not
necessarily coinciding, for a given $s\in\R$ (and then for every $s\in\R$ because $\mu$ is
bounded). This shows that in the case of a general time-dependent reaction term, the
asymptotic past and future speeds may or may not exist, and even if they exist and coincide
this does not imply the existence of a global mean speed.}
\end{remark}

We now derive the existence result in the supercritical case.

\begin{proposition}\label{pro:exsupercrit}
Under the assumptions~\eqref{eq:f/u},~\eqref{fpm} and~\eqref{C1omega}, for every
$(\kappa_-,\kappa_+)\in\mc{K}$ such that~$\kappa_\pm<\sqrt{\mu_\pm}$,
equation~\eq{P=f} admits a transition front $u$ connecting $0$ and $1$ with
asymptotic past and future speeds $c_\pm:=\kappa_\pm+\mu_\pm/\kappa_\pm$.
Furthermore, $u_x(t,x)<0$ for all $(t,x)\in\R\times\R$ and there exists a bounded function
$\xi:\R\to\R$ such that~\eqref{eq:phi+-} holds true.
\end{proposition}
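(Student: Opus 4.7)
The plan is to construct the transition front as a limit of Cauchy solutions pinched between sub- and super-solutions made from two NR1-type fronts of~\cite{NR1} with exponential rates $\kappa_-$ and~$\kappa_+$ respectively. I would first apply Proposition~\ref{pro:NR1} to the original equation~\eqref{eq:P=f} with parameter $\kappa_+$ (which is admissible because $\kappa_+<\min(\kappa_-,\sqrt{\mu_+})\le\sqrt{\ul\mu}$) to obtain a front $u^{(+)}$, monotone in $x$, with pure exponential decay of rate $\kappa_+$ and position $X_+(t)=\int_0^t(\kappa_++\mu(s)/\kappa_+)\,ds$, where $\mu(t):=\partial_u f(t,0)$. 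An analogous front $u^{(-)}$ with rate $\kappa_-$ is supplied by Proposition~\ref{pro:NR1} whenever $\kappa_-<\sqrt{\ul\mu}$; in the remaining subcase $\mu_+<\mu_-$ with $\kappa_-\in[\sqrt{\mu_+},\sqrt{\mu_-})$ I would apply the same proposition to an auxiliary nonlinearity $\tilde f\ge f$ obtained by enlarging $\mu(t)$ for $t\ge0$ so that the new $\tilde\mu_+$ exceeds $\kappa_-^2$, producing a super-solution of~\eqref{eq:P=f} that still behaves as an NR1 front of rate $\kappa_-$ as $t\to-\infty$.

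Next, using the KPP property $f(t,u)\le\mu(t)u$, I would observe that the explicit exponential profile $V_\kappa(t,x)=\exp\bigl(-\kappa x+\int_0^t(\kappa^2+\mu(s))\,ds\bigr)$ is a super-solution of~\eqref{eq:P=f} for every $\kappa>0$, and that capping by $1$ preserves this. I would then set
\[
\overline u(t,x)=\min\bigl(1,V_{\kappa_-}(t,x-\alpha_-),V_{\kappa_+}(t,x-\alpha_+)\bigr),\qquad
\underline u(t,x)=\max\bigl(u^{(-)}(t,x-\beta_-),u^{(+)}(t,x-\beta_+)\bigr),
\]
and choose the shifts $\alpha_\pm,\beta_\pm$ so that $\underline u\le\overline u$ on $\R\times\R$. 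This is where the strict supercriticality $\kappa_\pm<\sqrt{\mu_\pm}$ is essential: it leaves enough room for the two exponential pieces to be aligned consistently, so that the slower rate $\kappa_+$ controls the far right tail at all times while the faster rate $\kappa_-$ sits inside it near the transition zone in the past. Then $\underline u$ is a sub-solution, as the maximum of two classical solutions, and $\overline u$ a super-solution, as the capped minimum of super-solutions.

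I would then let $u_n$ solve~\eqref{eq:P=f} on $[-n,+\infty)\times\R$ with initial datum $u_n(-n,\cdot)=\overline u(-n,\cdot)$. Comparison yields $\underline u\le u_n\le\overline u$ on $[-n,+\infty)\times\R$, and parabolic compactness together with~\eqref{C1omega} produces, through a diagonal subsequence, a classical solution $u$ of~\eqref{eq:P=f} on $\R\times\R$ satisfying $\underline u\le u\le\overline u$. The monotonicity $u_x<0$ is inherited from each $u_n$, whose $x$-derivative solves a linear parabolic equation preserving the sign of the monotone initial datum; the strict inequality follows from the strong maximum principle together with~\eqref{infsup}. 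Choosing $X(t)$ so that $u(t,X(t))=1/2$, the pinching between $\underline u$ and $\overline u$ directly yields the transition-front property~\eqref{gtf}.

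Finally, the asymptotic speeds and profiles will come from the fact that, as $t\to-\infty$, the transition zones of both $\underline u$ and $\overline u$ are locked to the $\kappa_-$-components (with the $\kappa_+$-pieces trailing far behind), hence move at speed $c_-=\kappa_-+\mu_-/\kappa_-$, and symmetrically they move at speed $c_+=\kappa_++\mu_+/\kappa_+$ as $t\to+\infty$. Combining this squeezing with the standard stability of $\phi_{c_\pm}$ under the autonomous limit equations $u_t=u_{xx}+f_\pm(u)$, and invoking the uniform convergence $f\to f_\pm$ from~\eqref{fpm}, I expect to obtain~\eqref{eq:phi+-} for some bounded shift $\xi$. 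The main obstacle is the second step, namely ordering $\underline u\le\overline u$ globally in $x$ when the two pieces carry different exponential rates; the strict supercritical inequalities $\kappa_\pm<\sqrt{\mu_\pm}$ provide exactly the margin needed, which is why the borderline cases $\kappa_\pm=\sqrt{\mu_\pm}$ must be handled separately in the subsequent sections.
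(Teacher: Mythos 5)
Your overall architecture (pinching a limit of Cauchy solutions between a generalized subsolution built from two NR1-type fronts and a generalized supersolution) is the right idea and matches the paper's strategy, but two steps as written do not work. The main one is the global ordering $\underline u\le\overline u$. Your upper barrier is $\overline u=\min\bigl(1,V_{\kappa_-}(\cdot,\cdot-\alpha_-),V_{\kappa_+}(\cdot,\cdot-\alpha_+)\bigr)$, where $V_{\kappa_\pm}(t,x)=e^{-\kappa_\pm(x-X_\pm(t))}$ with $X_\pm(t)=\int_0^t(\kappa_\pm+\mu(s)/\kappa_\pm)\,ds$. Since $\kappa_+<\kappa_-<\sqrt{\mu_-}$ and $k\mapsto k+\mu_-/k$ is decreasing on $(0,\sqrt{\mu_-})$, one has $X_-(t)-X_+(t)\to+\infty$ as $t\to-\infty$; hence at the interface point $x=X_-(t)$ of the $\kappa_-$-front, $V_{\kappa_+}(t,X_-(t)-\alpha_+)=e^{-\kappa_+(X_-(t)-\alpha_+-X_+(t))}\to0$, while $\underline u(t,X_-(t))\ge u^{(-)}(t,X_-(t)-\beta_-)$ stays bounded away from $0$. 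So $\underline u>\overline u$ there for $t$ very negative, whatever the constant shifts; a symmetric failure occurs as $t\to+\infty$ with the roles of $\kappa_\pm$ exchanged. No amount of "margin" from $\kappa_\pm<\sqrt{\mu_\pm}$ repairs this, because the two exponentials are anchored at positions that drift infinitely far apart. The fix used in the paper is to take the \emph{sum} rather than the min: $\overline u=\min(u_1+u_2,1)$ dominates $\max(u_1,u_2)$ trivially, and it is a generalized supersolution because the last hypothesis in~\eqref{eq:f/u} gives $f(t,\alpha+\beta)\le f(t,\alpha)+f(t,\beta)$.

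The second problem is your treatment of the subcase $\mu_+<\mu_-$ with $\kappa_-\in[\sqrt{\mu_+},\sqrt{\mu_-})$, where Proposition~\ref{pro:NR1} does not apply directly to $f$ with rate $\kappa_-$. Enlarging $\mu(t)$ for $t\ge0$ produces a solution of a \emph{larger} equation, i.e.\ a supersolution of~\eqref{eq:P=f}, and a supersolution cannot be a component of the subsolution $\underline u=\max(u^{(-)},u^{(+)})$: the maximum of a supersolution and a subsolution is neither. The paper instead symmetrizes in time, $\t f(t,u):=f(-|t|,u)$, so that both temporal limits equal $\mu_-$ and $\kappa_-$ becomes admissible; the resulting front $u_1$ is an \emph{exact} solution of~\eqref{eq:P=f} for $t<0$, which is all that is needed since for $t\ge0$ the construction abandons $u_1$ entirely and traps $u$ between $u_2$ and $\min((M+1)u_2,1)$. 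Finally, the last step of your argument (convergence to $\phi_{c_\pm}$ by "standard stability") also needs more: the paper extracts two-sided pure exponential bounds of rate $\kappa_\pm$ on the limit profiles, traps them between two shifts of $\phi_{c_\pm}$ via Proposition 4.3 of~\cite{NR1}, and concludes with a Liouville-type classification from~\cite{bh1}; mere uniform convergence $f\to f_\pm$ is not enough to invoke stability of a non-compactly-perturbed front.
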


\begin{proof}
The proof is divided into four steps.

{\em Step 1: construction of the transition front.} Let $(\kappa_-,\kappa_+)\in\mc{K}$ satisfy
$\kappa_\pm<\sqrt{\mu_\pm}$. If~$\kappa_-=\kappa_+$, the front is directly provided by
Proposition~\ref{pro:NR1}. Let us consider the other case, that is,~$\kappa_+<\kappa_-$. We
introduce the following symmetrization of $f$: $\t f(t,u):=f(-|t|,u)$. The function
$\t\mu(t):=\partial_u \t f(t,0)$ satisfies $\t\mu(\pm\infty)=\mu_-$. By Proposition~\ref{pro:NR1},
there exists a transition front $u_1$ for the nonlinearity $\t f$ such that~\eqref{gtf} holds
with~$X=X_1$ given by
\be\label{defX1}
X_1(t):=\int_0^t \left(\kappa_-+\frac{\t\mu(s)}{\kappa_-}\right)ds\ \hbox{ for all }t\in\R.
\ee
In particular, $u_1$ is a solution of~\eq{P=f} for $t<0$. Since $0<\kappa_+=\min(\kappa_-,
\kappa_+)<\sqrt{\min(\mu_-,\mu_+)}$, Proposition~\ref{pro:NR1} also provides a
transition front $u_2$ for the original equation~\eq{P=f}, which satisfies~\eqref{gtf} with
$X=X_2$ given by
\be\label{defX2}
X_2(t):=\int_0^t \left(\kappa_++\frac{\mu(s)}{\kappa_+}\right)ds\ \hbox{ for all }t\in\R.
\ee
By~\eq{expdecay} we know that $u_1$ and $u_2$ satisfy
\Fi{u12exp}
u_1(t,x+X_1(t))e^{\kappa_- x}\to1\ \text{and}\ u_2(t,x+X_2(t))e^{\kappa_+ x}\to1
\text{ as }x\to+\infty,\ \text{uniformly in }t\in\R.
\Ff
Let us set $\ul u(t,x):=\max(u_1(t,x), u_2(t,x))$ and $\ol u(t,x):=\min(u_1(t,x)+u_2(t,x)),1)$.
The function $\ul u$ is a generalized subsolution of~\eq{P=f} for $t<0$. On the other hand,
since by~\eq{f/u},
$$\forall\,t\in\R,\ \forall\, 0<\alpha\leq\beta\le1\!-\!\alpha,\ \ f(t,\alpha\!+\!\beta)\!\leq\!
\frac{f(t,\beta)}\beta(\alpha\!+\!\beta)\!=\!\frac{f(t,\beta)}\beta\alpha\!+\!f(t,\beta)\!\leq\!f(t,\alpha)\!
+\!f(t,\beta),$$
it follows that $\ol u$ is a generalized supersolution of~\eq{P=f} for $t<0$.
For
$n\in\N$, let $u^n$ denote the (bounded) solution of~\eq{P=f} for $t>-n$, with initial datum
$u^n(-n,x)=\ul u(-n,x)$. The parabolic comparison principle yields $\ul u\leq u^n\leq\ol u$ in
$(-n,0)\times\R$, and, moreover, $u^n\geq u_2$ in the whole $(-n,+\infty)\times\R$. Using
interior parabolic estimates we see that, up to extraction of a subsequence, the
sequence~$(u^n)_{n\in\N}$ converges locally uniformly to an entire solution $u$ of~\eq{P=f}
satisfying $0<\ul u\leq u\leq\ol u\le 1$ in $(-\infty,0)\times\R$, as well as $u\geq u_2$
in~$\R\times\R$. Furthermore, $u$ is nonincreasing in $x$
because this is true for $\ul u$ whence for the $u^n$ by the comparison principle.

We now claim that $u$ is a transition front connecting $0$ and $1$ for~\eq{P=f}, such
that~\eqref{gtf} holds with
\be\label{defX12}
X(t)=\begin{cases}X_1(t) & \text{ if }t<0\\
                    X_2(t) & \text{ if }t\geq0.
\end{cases}
\ee
Since $\mu(\pm\infty)=\mu_\pm$, as seen at the beginning of the section,
\eqref{defX12} will then imply
that $u$ has the desired past and future speeds $c_\pm=\kappa_{\pm}+\mu_{\pm}/
\kappa_{\pm}$. Let us start to check it for large negative times. The
inequalities~$0<\kappa_+<\kappa_-<\sqrt{\mu_-}$ yield
$$\lim_{t\to-\infty}\frac{X_1(t)}t=\kappa_-+\frac{\mu_-}{\kappa_-}<
\kappa_++\frac{\mu_-}{\kappa_+}=\lim_{t\to-\infty}\frac{X_2(t)}t.$$
Thus, since $u_2(t,x+X_2(t))\to0=\inf u_1$ as $x\to+\infty$ and $u_1(t,x+X_1(t))\to1=
\sup u_2$ as~$x\to-\infty$ uniformly in $t\in\R$, we infer that
$\sup_{x\in\R}\big|\ul u(t,x)-u_1(t,x)\big|\to0$ and $\sup_{x\in\R}
\big|\ol u(t,x)-u_1(t,x)\big|\to0$ as~$t\to-\infty$.
Hence $u(t,x)-u_1(t,x)\to0$ as $t\to-\infty$ uniformly in $x\in\R$, because
$\ul u\leq u\leq\ol u$ in $(-\infty,0)\times\R$. It follows that, for given $\e>0$, there
exist $T_\e<0$ and $R_{\epsilon}>0$ such that
\Fi{interface}
\inf_{t<T_\e,\,x<-R_\e}u(t,x+X_1(t))>1-\e\ \hbox{ and }\ \sup_{t<T_\e,\,x>R_\e}u(t,x+X_1(t))<\e.
\Ff
We now focus on positive times. Using~\eqref{eq:u12exp} with $0<\kappa_+<\kappa_-$ and
the limit $u_2(t,x+X_2(t))\to1$ as~$x\to-\infty$,  we deduce the existence of $M_\e\geq1$ such
that $M_\e u_2(T_\e,x)\geq u_1(T_\e,x)$ for all $x\in\R$.
Consequently,~$u(T_\e,x)\leq\ol u(T_\e,x)\leq\min\big((M_\e+1)u_2(T_\e,x),1\big)$
for all $x\in\R$.
As for $\ol u$, the function $\min((M+1)u_2,1)$ is a generalized supersolution of~\eq{P=f} by
the last hypothesis in~\eqref{eq:f/u}. Hence, by comparison,
\be\label{uu2}
\forall\,t\in[T_\e,+\infty),\ \forall\,x\in\R,\quad
u_2(t,x)\leq u(t,x)\leq\min\big((M_\e+1)u_2(t,x),1\big).
\ee
Since $u_2$ satisfies~\eqref{gtf} with $X=X_2$, we find a constant $R'_\e>0$ such that
$$\inf_{t\geq T_\e,\,x<-R'_\e}u(t,x+X_2(t))>1-\e,\qquad
\sup_{t\geq T_\e,\,x>R'_\e}u(t,x+X_2(t))<\e.$$
This and~\eq{interface} prove the claim, because $X_1$ and $X_2$ are locally bounded.

{\em Step 2: $u_x<0$ in $\R\times\R$.} We know that $u_x(t,x)\leq0$ for all
$(t,x)\in\R\times\R$. Differentiating~\eq{P=f} with respect to $x$, we find that the function
$u_x$ is an entire solution of a linear parabolic equation. Being nonpositive, the parabolic
strong maximum principle implies that it is either strictly negative, or identically equal to $0$.
The latter case is ruled out because, as a transition front connecting $0$ and $1$, $u$ is such
that~$u(t,-\infty)=1$, $u(t,+\infty)=0$ for every $t\in\R$.

{\em Step 3: convergence to a standard front as $t\to+\infty$.} We finally need to show that
there exists a  bounded function $\xi$ such that~\eq{phi+-} holds when $t\to\pm\infty$, where
$u$, $X$ and $c_\pm$ are defined in Step~1. The function $\xi$ is chosen in such a way that
$u(t,X(t)+\xi(t))=1/2$ for~$t\in\R$. Notice that $\xi$ is bounded by~\eqref{gtf}. Let us first prove
here the convergence~\eq{phi+-} as~$t\to+\infty$. To do so, consider an arbitrary sequence
$\seq{t}$ in $\R$ diverging to $+\infty$. As $n\to+\infty$, the functions~$u(t+t_n,x+X(t_n)+
\xi(t_n))$ converge (up to subsequences) locally uniformly in~$(t,x)\in\R\times\R$ to a solution
$0\le\t u\le1$ of~\eq{P=f} with $f$ replaced by $f_+$, satisfying~$\t u(0,0)=1/2$ and $\tilde{u}_x\le0$ in~$\R\times\R$.\par
We now derive the exponential decay of $\t u$.
By~\eq{u12exp} and~\eqref{uu2}, the profile of $u$ decays as~$x\to+\infty$
with exponential rate $\kappa_+$, in the sense that there are $R>0$ and $M>1$
such that
\Fi{expbounds}
\forall\,t\ge0,\ \forall\,x>R,\quad M^{-1}\leq
u(t,x+X_2(t))\,e^{\kappa_+x}\leq M.
\Ff
For fixed $(t,x)\in\R\times\R$, we want to estimate
$\t u(t,x+c_+t) =\limn u(t+t_n,x+c_+t+X(t_n)+\xi(t_n))$.
We know that $X(t_n)=X_2(t_n)$ for $n$ large enough and, by \eqref{defX2}
and $c_+=\kappa_++\mu_+/\kappa_+$, we get that
$c_+t+X_2(t_n)-X_2(t+t_n)\to0$ as $n\to+\infty$.
As a consequence, if $x>R+\|\xi\|_{L^{\infty}(\R)}$, we can apply
\eqref{eq:expbounds} and deduce
\Fi{M}
\tilde{M}^{-1}\leq
\t u(t,x+c_+t)\,e^{\kappa_+x}\leq \tilde{M},\Ff
with $\tilde{M}=M\,e^{\kappa_+\|\xi\|_{L^{\infty}(\R)}}$. Let $\phi_{c_+}(x-c_+t)$ be the
standard traveling front for the non\-linearity~$f_+$, connecting~$0$ and~$1$ and moving with
speed $c_+=\kappa_++\mu_+/\kappa_+>2\sqrt{\mu_+}$, normalized by~$\phi_{c_+}(0)=1/2$.
We know from~\cite{aw} that $\phi_{c_+}$ has the same exponential decay
$\kappa_+$ as
$\t u$, in the sense that there exists $A\ge1$ such that
$A^{-1}\le\phi_{c_+}(y)\,e^{\kappa_+y}\le A$ for all $y\ge0$. It then follows from Proposition 4.3
in~\cite{NR1} that there is~$a\geq0$ such that
$\phi_{c_+}(x-c_+t+a)\leq\t u(t,x)\leq
\phi_{c_+}(x-c_+t-a)$ for all $(t,x)\in\R\times\R$.
This allows us to apply a Liouville type result from~\cite{bh1} (Theorem 3.5
there, see also Lemma~8.2 of~\cite{hnrr2}, adapted here to the homogeneous case)
and infer the existence of $b\in\R$ such that $\t u(t,x)=
\phi_{c_+}(x-c_+t-b)$ for all $(t,x)\in\R\times\R$. Since~$\t
u(0,0)=1/2=\phi_{c_+}(0)$ and $\phi_{c_+}$ is
strictly decreasing, we derive $b=0$. We have shown in particular that (up to subsequences)
$u(t_n,x+X(t_n)+\xi(t_n))\to\phi_{c_+}(x)$ as $n\to+\infty$ locally uniformly in $x\in\R$. Since
the limit $\phi_{c_+}(x)$ does not depend on the particular sequence~$\seq{t}$ diverging to
$+\infty$, we deduce that~$u(t,x+X(t)+\xi(t))\to\phi_{c_+}(x)$ as $t\to+\infty$ locally
uniformly in $x\in\R$. The convergence actually holds uniformly in $x\in\R$ - whence in
$C^2(\R)$ by parabolic estimates - because $0<u(t,x)<1$ is decreasing with respect to
$x\in\R$ for any $t\in\R$ and~$\phi_{c_+}(-\infty)=1$, $\phi_{c_+}(+\infty)=0$.

{\em Step 4: convergence to a standard front as $t\to-\infty$.} Consider here a sequence
$\seq{t}$ diverging to~$-\infty$ and let $\t u$ be as in Step~3. In order to apply the previous
arguments to show that $\t u$ coincides with the standard traveling front $\phi_{c_-}(x-c_-t)$
for the nonlinearity $f_-$, normalized by $\phi_{c_-}(0)=1/2$, and thus to conclude the proof, it
is sufficient to check that there exists $\tilde{M}\geq1$ such
that, for $x$ large enough, \eqref{eq:M} holds with $c_+$ and $\kappa_+$
replaced by $c_-$ and $\kappa_-$. Since
$u(t,x)-u_1(t,x)\to0$ as $t\to-\infty$ uniformly in $x\in\R$ and $X=X_1$ on
$\R_-$, for fixed~$(t,x)\in\R\times\R$ we see that
\Fi{limitdecay}\begin{split}
   \t u(t,x+c_-t)\,e^{\kappa_-x} &= \limn u(t+t_n,x+c_-t+X(t_n)+\xi(t_n))\,e^{\kappa_-x}\\
&= \limn u_1(t+t_n,x+c_-t+X_1(t_n)+\xi(t_n))\,e^{\kappa_-x}.
  \end{split}
\Ff
Now, we know from the one hand that $c_-t+X_1(t_n)-X_1(t+t_n)\to0$ as
$n\to+\infty$
by~\eqref{defX1} and~$c_-=\kappa_-+\mu_-/\kappa_-$, and from the other
hand that there exists $R'>0$ such that
$$\forall\,t\in\R,\ \forall\,n\in\N,\ \forall\,y>R',\quad\frac12\leq
u_1(t+t_n,y+X_1(t+t_n))e^{\kappa_-y}\leq2.$$
Thus, for all $t\in\R$ and $x>R'+\|\xi\|_{L^\infty(\R)}$,~\eqref{eq:limitdecay}
yields
$e^{-\kappa_-\|\xi\|_{L^\infty(\R)}}/2\leq   \t u(t,x+c_-t)\,e^{\kappa_-x}\leq
2\,e^{\kappa_-\|\xi\|_{L^\infty(\R)}}$,
that is,~\eqref{eq:M} holds with $c_+$ and $\kappa_+$ replaced by $c_-$ and $\kappa_-$.
The proof of Proposition~\ref{pro:exsupercrit} is thereby complete.
\end{proof}


\subsection{Critical asymptotic past or future speeds}\label{sec:critical}

In this subsection, we construct transition fronts connecting $0$ and $1$ with either critical
past speed $c_-$ or critical future speed $c_+$, that is, $c_-=2\sqrt{\mu_-}$ or
$c_+=2\sqrt{\mu_+}$. This will conclude the proof of Theorem~\ref{th1}.

Until the end of Section \ref{sec:critical}, we assume that $f$
satisfies~\eqref{eq:f/u},~\eqref{fpm} and~\eqref{C1omega}.

\subsubsection{A lower bound on the asymptotic past and future speeds}

We first derive an easy consequence of the spreading result of~\cite{aw}.

\begin{proposition}\label{pro:spreading}
Any transition front $u$ connecting $0$ and $1$ for~\eq{P=f} satisfies
$\liminf_{t\to\pm\infty}X(t)/t\geq2\sqrt{\mu_\pm}$.
In particular, if $u$ has some asymptotic past or future speeds $c_{\pm}$, then
$c_{\pm}\ge2\sqrt{\mu_{\pm}}$.
\end{proposition}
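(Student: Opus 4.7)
The strategy is to combine Aronson--Weinberger's spreading theorem for the limiting equations $v_t=v_{xx}+f_\pm(v)$ with the uniform control on the front position given by~\eqref{gtf} and~\eqref{infsup}. I will treat $t\to+\infty$ in detail; the case $t\to-\infty$ is symmetric modulo a sign-flip.

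Fix a small $\e>0$. By~\eqref{fpm} one can choose $T>0$ so that $f(t,u)\ge(1-\e)f_+(u)$ for every $t\ge T$ and $u\in[0,1]$. Using~\eqref{gtf} I pick a continuous, compactly supported $v_0:\R\to[0,1/2]$, not identically zero, whose support sits slightly to the left of $X(T)$ and which satisfies $v_0\le u(T,\cdot)$. Let $v$ solve $v_t=v_{xx}+(1-\e)f_+(v)$ for $t\ge T$ with $v(T,\cdot)=v_0$. The parabolic comparison principle then gives $v\le u$ on $[T,+\infty)\times\R$. Since $(1-\e)f_+$ retains the KPP structure with $((1-\e)f_+)'(0)=(1-\e)\mu_+>0$, the Aronson--Weinberger spreading theorem~\cite{aw} implies that for any $c\in(0,2\sqrt{(1-\e)\mu_+})$ the set $\{v(t,\cdot)\ge 3/4\}$ eventually contains an interval of half-width $c(t-T)$ centered at any fixed point $x_0\in\supp v_0$. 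Meanwhile, \eqref{infsup} provides $R>0$ with $u(t,X(t)+R)\le 3/4$ for every $t\in\R$. Combining $v\le u$ with these two facts forces $x_0+c(t-T)\le X(t)+R$ for all large $t$, hence $X(t)/t\ge c+o(1)$ as $t\to+\infty$. Letting $c\uparrow 2\sqrt{(1-\e)\mu_+}$ and then $\e\downarrow 0$ yields $\liminf_{t\to+\infty}X(t)/t\ge 2\sqrt{\mu_+}$.

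For $t\to-\infty$, fix $T'<0$ with $f(t,u)\ge(1-\e)f_-(u)$ for every $t\le T'$. For each $s<T'$, place an analogous compactly supported datum $v_0^s\le u(s,\cdot)$ near $X(s)$ and let $v^s$ solve $v_t=v_{xx}+(1-\e)f_-(v)$ forward from time $s$; the comparison $v^s\le u$ remains valid on $[s,T']\times\R$ because every intermediate time is $\le T'$. Applying Aronson--Weinberger on the interval of length $T'-s\to+\infty$, the right endpoint of the spreading region, roughly $X(s)+c(T'-s)$ for any $c<2\sqrt{(1-\e)\mu_-}$, is where $v^s(T',\cdot)$ is close to $1$. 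Since this endpoint must sit to the left of $X(T')+R$, one obtains $X(s)\le X(T')+R+c(s-T')+\mathrm{const}$. Dividing by the negative quantity $s$ flips the inequality, and sending $s\to-\infty$ yields $\liminf_{s\to-\infty}X(s)/s\ge c$, and finally $\ge 2\sqrt{\mu_-}$ after optimizing in $c$ and $\e$. The last assertion of the proposition is immediate because any existing limit equals the corresponding $\liminf$.

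The one delicate point is the bookkeeping in the $-\infty$ case: the spreading theorem is intrinsically a \emph{forward}-in-time statement, so one cannot simply reverse time in the argument. Instead one fixes the ``target'' time $T'$, lets the ``source'' time $s$ tend to $-\infty$, compares the right endpoint of the spread region at time $T'$ with $X(T')+R$, and only then divides by the negative $s$ to convert the forward-spreading estimate into the desired lower bound on $\liminf_{s\to-\infty}X(s)/s$.
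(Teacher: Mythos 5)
Your proof is correct and follows essentially the same route as the paper: comparison of $u$ from above a compactly supported sub-datum with solutions of the $\varepsilon$-perturbed homogeneous limiting equations, the Aronson--Weinberger spreading theorem, and the uniformity in~\eqref{gtf} to locate $X(t)$; your treatment of $t\to-\infty$ (fixing the target time and letting the source time recede) is exactly the paper's device. The only cosmetic remark is that the bound $u(t,X(t)+R)\le 3/4$ comes from~\eqref{gtf} rather than~\eqref{infsup}, and that the uniformity in $s$ of the spreading for the translated data $v_0^s$ should be justified by taking them as translates of one fixed profile (available by~\eqref{infsup}) together with the translation invariance of the autonomous comparison equation.
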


\begin{proof}
By hypothesis~\eqref{fpm}, for any $\e\in(0,1/2)$, there is $T_\e\in\R$ such that
$f(t,u)\geq(1-\e)f_+(u)$ for all~$t>T_\e$ and $u\in(0,1)$. Let $u$ be a transition front
connecting $0$ and $1$ for~\eq{P=f}. Hence, $u$ is a supersolution of the problem
\Fi{P=f+e}
w_t= w_{xx}+(1-\e)f_+(w),
\Ff
for $t>T_\e$, $x\in\R$. It then follows from~\cite{aw} that
$u(t,2\sqrt{(1-2\e)\mu_+}\, t)\to1$ as $t\to+\infty$,
whence, by~\eqref{gtf},~$\liminf_{t\to+\infty}\big(X(t)-2\sqrt{(1-2\e)\mu_+}\, t\big)>-\infty$. The
inequality $\liminf_{t\to+\infty}X(t)/t\ge2\sqrt{\mu_+}$ then follows from the
arbitrariness of $\e\in(0,1/2)$.

The case $t\to-\infty$ is similar: take $\e\in(0,1/2)$ and let $T_\e\in\R$ be such that $u$ is a
supersolution of the problem
\Fi{P=f-e}
w_t= w_{xx}+(1-\e)f_-(w),
\Ff
for $t<T_\e$, $x\in\R$. By~\eqref{gtf}, there exists a continuous function $v_0:\R\to[0,1]$
which is not identically equal to~$0$ and satisfies $u(t,X(t)+x)\ge v_0(x)$ for all $t\in\R$ and
$x\in\R$. Let $v$ be the solution of $v_t=v_{xx}+(1-\e)f_-(v)$ for $t>0$ and $x\in\R$,
emerging from the initial datum $v_0$. By~\cite{aw} we know that
$v(t,2\sqrt{(1-2\e)\mu_-}\, t)\to1$ as~$t\to+\infty$. Therefore, one infers by comparison that,
for any $s<T_\e$,
$$1\ge u\big(T_\e,X(s)+2\sqrt{(1-2\e)\mu_-}(T_\e-s)\big)\geq
v\big(T_\e-s,2\sqrt{(1-2\e)\mu_-}(T_\e-s)\big)\to1\text{ as }s\to-\infty.$$
It follows then from~\eqref{gtf} that
$\limsup_{s\to-\infty}\big(X(s)+2\sqrt{(1-2\e)\mu_-}(T_\e-s)\big)<+\infty$, which concludes the
proof of the proposition due to the arbitrariness of $\e\in(0,1/2)$.
\end{proof}

\subsubsection{Segment $(BC]$: supercritical past speed and critical future
speed}
\label{sec:BC}

We now construct fronts with asymptotic speeds $c_\pm$ satisfying
\eqref{eq:alternative} with the restrictions
$$\kappa_-<\sqrt{\mu_-}\ \hbox{ and }\ \kappa_+=\sqrt{\mu_+}.$$
Since $\sqrt{\mu_+}=\kappa_+\leq\kappa_-$, this case is allowed only if
$\mu_+<\mu_-$. We
know from Proposition~\ref{pro:NR1} that, for~$\kappa\in(0,\sqrt{\mu_+})$, there is a transition
front $u_\kappa$ connecting 0 and 1 satisfying~\eqref{gtf} with
$$X(t)=X_\kappa(t):=\int_0^t \left(\kappa+\frac{\mu(s)}{\kappa}\right)ds\ \hbox{ for all }t\in\R,$$
and such that $u_{\kappa}$ is decreasing with respect to $x$. We need some additional
properties of the transition fronts $(u_\kappa)_{0<\kappa<\sqrt{\mu_+}}$, which are derived
in their construction in the proof of Theorem~2.3 part~1) of~\cite{NR1} or Theorem~1.3
of~\cite{rr}. Let us recall the construction. For $\kappa\in(0,\sqrt{\mu_+})$, set
\be\label{defUkappa}
U_\kappa(x):=\min(e^{-\kappa x},1).
\ee
For $n\in\N$, let $u_\kappa^n$ be the bounded solution of~\eq{P=f} for $t>-n$, emerging from
$$u_\kappa^n(-n,x)=U_\kappa(x-x_n),\quad\text{where }\
x_n:=\int_0^{-n}\left(\kappa+\frac{\mu(s)}{\kappa}\right)ds.$$
Then $u_\kappa$ is the locally uniform limit of (a subsequence of) $(u_\kappa^n)_{n\in\N}$.

We claim that, performing the same construction, but taking
$\kappa=\kappa_-\in[\sqrt{\mu_+},\sqrt{\mu_-})$, one obtains a transition front with the desired
asymptotic past and future speeds $c_\pm$. Call $u:=u_{\kappa_-}$ the function constructed
in such a way. The exponential decay $\kappa=\kappa_-$ is admissible in the construction
of~\cite{NR1,rr} if one replaces the nonlinearity $f$ with $\t f(t,s):=f(-|t|,s)$. For $t<0$,~$\t f=f$
and therefore $u$ coincides with a transition front connecting $0$ and $1$ for the nonlinearity~
$\t f$ and it satisfies~\eqref{gtf} with $X$ such that $X'(t)=\kappa_-+\mu(t)/\kappa_-$
for~$t<0$. This implies that $u$ satisfies~\eqref{gtf} for $t<0$, with $X$ such that
$X(t)/t\to\kappa_-+\mu_-/\kappa_-$ as $t\to-\infty$. In particular, $u$ has an asymptotic past
speed equal to $\kappa_-+\mu_-/\kappa_-=c_-$.

In order to investigate the properties of $u$ for positive times, consider the family
$(U_\kappa)_{0<\kappa<\sqrt{\mu_+}}$ defined in~\eqref{defUkappa}. Fix
$\kappa\in(0,\sqrt{\mu_+})$ and, for $\rho\in\R$, call $U_{\kappa}^\rho$ the translated of
$U_\kappa$ by $\rho$, that is,~$U_{\kappa}^\rho(x):=U_{\kappa}(x+\rho)$. Since
$0<\kappa<\sqrt{\mu_+}=\kappa_+\le\kappa_-$, any translated $U_{\kappa}^\rho$ is less
steep than~$U_{\kappa_-}(x):=\min(e^{-\kappa_-x},1)$, in the sense that there is
$\zeta^\rho\in\R$ such that
$ U_{\kappa_-}\geq U_\kappa^\rho$ in $(-\infty,\zeta^\rho]$ and
$U_{\kappa_-}<U_\kappa^\rho$ in $(\zeta^\rho,+\infty)$.
Thus, the classical result about the number of zeros of solutions of linear parabolic equations
(see~\cite{a}, and also~\cite{dm,dgm,lau}) implies that, for any $\rho$ and
$t\in\R$, there exists
$\zeta^\rho_t\in\R\cup\{\pm\infty\}$ for which
\Fi{intersection}
u(t,x)\geq u_\kappa(t,x+\rho)\text{ if }x\leq\zeta^\rho_t\ \hbox{ and }\
u(t,x)\leq u_\kappa(t,x+\rho)\text{ if }x\geq\zeta^\rho_t.
\Ff
This readily implies that, for any $t\in\R$ and $0<a<b<1$, the diameter of the transition zone
$\{x\in\R;\ a\leq u(x,t)\leq b\}$ cannot be bigger than that of $\{x\in\R;\ a\leq u_\kappa(x,t)\leq
b\}$. Thus, since transition fronts are characterized by the uniform boundedness in time of
transition zones, and $0<u<1$ by the strong \MP, we
deduce that $u$ is a transition front for~\eq{P=f}, as $u_{\kappa}$ is.
Namely,~\eqref{gtf} holds for some function $X:\R\to\R$. Moreover, the second inequality
in~\eqref{eq:intersection} implies that we can choose a large negative $\rho$
in such a way that
$u(0,x)\leq u_\kappa(0,x+\rho)$ for all $x\geq0$.
On the other hand, $u_\kappa(0,x+\rho)\geq u_\kappa(0,\rho)>0$ for $x\leq0$
since $u_{\kappa}$ is
decreasing with respect to $x$. As a consequence, there exists $M\geq1$ such
that~$u(0,x)\leq M u_\kappa(0,x+\rho)$ for all $x\in\R$. Hence, by comparison,
$$u(t,x)\leq\min(M u_\kappa(t,x+\rho),1)\ \hbox{ for all
}(t,x)\in\R_+\times\R,$$
because $\min(M u_\kappa,1)$ is a generalized supersolution of~\eq{P=f} by the last
hypothesis in~\eqref{eq:f/u}. Therefore, from the fact that $u_\kappa$ admits future speed
equal to $\kappa+\mu_+/\kappa$, one easily gets that the function $X$ for which $u$
satisfies~\eqref{gtf} verifies
\be\label{limsupkappa}
\limsup_{t\to+\infty}\frac{X(t)}t\leq\kappa+\frac{\mu_+}\kappa.
\ee
Since this holds for all $\kappa\in(0,\sqrt{\mu_+})$, we derive $\limsup_{t\to+\infty}X(t)/t
\leq2\sqrt{\mu_+}$. Owing to Proposition~\ref{pro:spreading}, we eventually infer that $u$ has
a future speed equal to $c_+=2\sqrt{\mu_+}$.

It remains to show that $u$ satisfies $u_x(t,x)<0$ for all $(t,x)\in\R\times\R$, as well
as~\eq{phi+-} for some bounded function $\xi$. Consider again the family of supercritical fronts
$(u_\kappa)_{0<\kappa<\sqrt{\mu_+}}$ given by Proposition~\ref{pro:NR1}. These are the
same fronts provided by Proposition~\ref{pro:exsupercrit} in the cases
where~$\kappa_-=\kappa_+$, and therefore they satisfy the same type of properties we want
to derive for $u$. The fact that $u$ satisfies everywhere $u_x<0$ then follows immediately
from~\eqref{eq:intersection}. Indeed, for a given~$(t,x)\in\R\times\R$, let
$\rho\in\R$ be such that
$u(t,x)=u_\kappa(t,\rho)$. Then~$u_x(t,x)>(u_\kappa)_x(t,\rho)$ would
violate~\eqref{eq:intersection}, whence $u_x(t,x)\leq(u_\kappa)_x(t,\rho)<0$.

The convergence in~\eq{phi+-} as $t\to-\infty$ is a consequence of
Proposition~\ref{pro:exsupercrit}. Indeed $u$ coincides for $t<0$ with the supercritical front for
the nonlinearity $\t f$ with past and future speeds both equal to
$c_-=\kappa_-+\mu_-/\kappa_-$ given by Proposition~\ref{pro:NR1} or, equivalently, by
Proposition~\ref{pro:exsupercrit} and such a front satisfies the desired convergence as
$t\to-\infty$ by the last statement of Proposition~\ref{pro:exsupercrit}.

We now deal with the convergence as $t\to+\infty$. Let $X$ be such that $u$
satisfies~\eqref{gtf}. Up to perturbing $X$ by adding a bounded function, we can assume
without loss of genera\-lity that $u(t,X(t))=1/2$ for all $t\in\R$. For given $\seq{t}$ diverging to
$+\infty$, the functions~$(u(\.+t_n,\.+X(t_n)))_{n\in\N}$ converge (up to subsequences) locally
uniformly to a solution~$\t u$ of the limit equation with nonlinearity $f_+$. Moreover,
$\t u(0,0)=1/2$. Let~$(X_\kappa)_{0<\kappa<\sqrt{\mu_+}}$ be the family of functions
for which the transition fronts $(u_\kappa)_{0<\kappa<\sqrt{\mu_+}}$ satisfy~\eqref{gtf},
together with~$u_\kappa(t,X_\kappa(t))=1/2$ for all $t\in\R$ (the real numbers $X_{\kappa}(t)$
are then uniquely defined, since the functions $u_{\kappa}$ are continuously decreasing in
$x$). We know by Proposition~\ref{pro:exsupercrit} that there exists a family of bounded
functions $(\xi_\kappa)_{0<\kappa<\sqrt{\mu_+}}$ such that
\Fi{xi-kappa}
\forall\,\kappa\in(0,\sqrt{\mu_+}),\quad u_\kappa(t,X_k(t)+\xi_\kappa(t)+\cdot)\to
\phi_{c_\kappa}\ \hbox{ in }C^2(\R)\ \hbox{ as }t\to+\infty,
\Ff
where $c_\kappa=\kappa+\mu_+/\kappa$ and $\phi_{c_\kappa}(x-c_\kappa t)$ is a standard
traveling front for the equation with nonlinearity $f_+$. Fix any given
$\kappa\in(0,\sqrt{\mu_+})$. By adding a constant to $\xi_\kappa$ if need be, we can reduce
without loss of generality to the case where $\phi_{c_\kappa}$ satisfies
$\phi_{c_\kappa}(0)=1/2$. We then have
$1/2=\lim_{t\to+\infty}u_\kappa(t,X_k(t))=
\lim_{t\to+\infty}\phi_{c_\kappa}(-\xi_\kappa(t))$,
whence $\xi_\kappa(+\infty)=0$ by the strict monotonicity of $\phi_{c_\kappa}$. It then follows
from the uniform continuity of the~$u_\kappa$ and their space derivatives up to order 2,
that~\eqref{eq:xi-kappa} holds with $\xi_\kappa\equiv0$. Now, for any~$\e>0$, we
have~$u(t,X(t)+\e)<1/2=u_\kappa(t,X_\kappa(t))$ for all $t\in\R$ and thus, owing
to~\eqref{eq:intersection},
$$\forall\,t\in\R,\ \forall\,x\geq0,\quad u(t,x+X(t)+\e)\leq u_\kappa(t,x+X_\kappa(t)).$$
The arbitrariness of $\e>0$ implies that $u(t,x+X(t))\leq u_\kappa(t,x+X_\kappa(t))$ for all
$t\in\R$ and $x\geq0$. The reverse inequality for $x\leq0$ is obtained in analogous way. Using
these inequalities at $t=t_n$ and letting $n\to+\infty$, we eventually derive
by~\eqref{eq:xi-kappa} (with $\xi_\kappa\equiv0$),
$$\forall\,\kappa\in(0,\sqrt{\mu_+}),\quad \t u(x,0)\geq\phi_{c_\kappa}(x)\text{ for }x\leq0
\ \hbox{ and }\ \t u(x,0)\leq\phi_{c_\kappa}(x)\text{ for }x\geq0.$$
On the other hand, as $\kappa\to\sqrt{\mu_+}$, the standard traveling fronts $\phi_{c_\kappa}$
converge uniformly in $\R$ to the (unique) critical traveling front $\phi_{2\sqrt{\mu_+}}$ for the
nonlinearity $f_+$, normalized by $\phi_{2\sqrt{\mu_+}}(0)=1/2$. We infer that
\be\label{tildeuphi}
\t u(x,0)\geq\phi_{2\sqrt{\mu_+}}(x)\ \text{ for }x\leq0,\qquad \t
u(x,0)\leq\phi_{2\sqrt{\mu_+}}(x)\ \text{ for }x\geq0.
\ee
This means that $\t u$ is steeper than $\phi_{2\sqrt{\mu_+}}$ at time $0$. But it is known
that~$\phi_{2\sqrt{\mu_+}}(x-2\sqrt{\mu_+}t)$ is the steepest entire solution of the equation
with nonlinearity $f_+$. Indeed, it is the limit of the solutions $v_n(t,x)$ of the corresponding
Cauchy problems emerging from $v_n(-n,\cdot)=\1_{(-\infty,x_n)}$ at time $-n$ for some
sequence $(x_n)_{n\in\N}$, where $\1_{(-\infty,x_n)}$ denotes the characteristic function of
the interval $(-\infty,x_n)$, and the Heaviside function is steeper than any function ranging
in~$[0,1]$ (see e.g.~\cite{hr,kpp,n2}). Therefore, the reverse inequalities
of~\eqref{tildeuphi} hold true as well, because~$\t u(0,0)=1/2=\phi_{2\sqrt{\mu_+}}(0)$. We
have eventually shown that, up to subsequences,~$u(t_n,X(t_n)+\cdot)$ converges locally
uniformly to $\phi_{2\sqrt{\mu_+}}$ as $n\to+\infty$. Since this holds for any  sequence
$\seq{t}$ diverging to $+\infty$, we deduce that $u(t,\.+X(t))\to\phi_{2\sqrt{\mu_+}}$ as
$t\to+\infty$ locally uniformly in $x\in\R$. The convergence actually holds uniformly in
$x\in\R$~-~whence in $C^2(\R)$ by parabolic estimates~-~because $0<u(t,\.)<1$ is
decreasing for any $t\in\R$ and $\phi_{2\sqrt{\mu_+}}(-\infty)=1$,
$\phi_{2\sqrt{\mu_+}}(+\infty)=0$. This concludes the proof of Theorem~\ref{th1} in this case.

\subsubsection{Point $B$: minimal past and future
speeds}\label{sec223}
\label{sec:B}

We deal with the case $\kappa_-=\sqrt{\mu_-}$ and $\kappa_+=\min(\sqrt{\mu_-},
\sqrt{\mu_+})=\sqrt{\ul\mu}$\,, that is, asymptotic speeds $c_\pm$ given by
\Fi{speeds*}
c_-=2\sqrt{\mu_-}\ \ \hbox{ and }\ \ c_+=\begin{cases}
2\sqrt{\mu_+} & \!\!\text{if }\mu_-\geq\mu_+\\
\displaystyle\sqrt{\mu_-}+\frac{\mu_+}{\sqrt{\mu_-}} & \!\!\text{otherwise.}\end{cases}
\Ff
To this aim, we make use of the ``critical'' transition front $u$ connecting $0$ and $1$
for~\eq{P=f}, which can be constructed as in~\cite{n2}. Namely, there is a sequence
$(x_n)_{n\in\N}$ in $\R$ such that the solutions~$u_n$ of~\eq{P=f} for~$t>-n$ and emerging
from $u_n(-n,\cdot)=\1_{(-\infty,x_n)}$ at time $-n$, converge locally uniformly in
$\R\times\R$, up to extraction of a subsequence, to an entire solution $0<u<1$ of~\eq{P=f}
such that $u(0,0)=1/2$ and $u$ is decreasing in $x$. Furthermore, since the Heaviside
function is steeper than any function ranging in~$[0,1]$, it follows from~\cite{a} that $u$ is
critical in the following sense: if $0<v<1$ is any solution of~\eq{P=f} coinciding with $u$ at
some~$(t_0,x_0)\in\R\times\R$, then either $v\equiv u$ in $\R\times\R$, or $u(t_0,x)>v(t_0,x)$
for $x<x_0$ and~$u(t_0,x)<v(t_0,x)$ for $x>x_0$. In other words, $u$ is steeper than any
entire solution~$0<v<1$ of~\eq{P=f}.
Taking as $v$ a suitable translation of one of the transition fronts
$(u_\kappa)_{0<\kappa<\sqrt{\ul\mu}}$ connecting~$0$ and $1$
for~\eq{P=f} provided by Proposition~\ref{pro:NR1}, we conclude, as
in~\eq{intersection}, that $u$ is a transition front connecting $0$
and $1$.

From this one claims that, if $X$ is a function for which the critical front $u$
satisfies~\eqref{gtf}, and $v$ is transition front connecting $0$ and $1$ with asymptotic past
and future speeds $\t c_\pm$, then
\be\label{tildecpm}
\limsup_{t\to\pm\infty}\frac{X(t)}t\leq \t c_\pm.
\ee
More precisely, the inequality $\limsup_{t\to+\infty}X(t)/t\le\t c_+$ can be established as
in~\eqref{limsupkappa}. On the other hand, if we assume by contradiction that
$\limsup_{t\to-\infty}X(t)/t>\t c_-$ and if $\t X:\R\to\R$ denotes a function for which~\eqref{gtf}
holds for the transition front $v$, then there is a sequence~$(t_n)_{n\in\N}$ in $\R$ diverging to
$-\infty$ and such that $X(t_n)-\t X(t_n)\to-\infty$ as $n\to+\infty$.
Let~$\rho>0$ be any
positive real number. It follows then from~\eqref{infsup} for~$X$ and~\eqref{gtf} for~$\t X$
that~$u(t_n,X(t_n))<v(t_n,X(t_n)+\rho)$ for $n$ large enough, whence
$u(t_n,x)<v(t_n,x+\rho)$ for
all~$x\ge X(t_n)$ by the criticality of $u$. Since $u<1$ in $\R\times\R$ and since
$\inf_{x\le X(t_n)}v(t_n,x+\rho)\to1$ as~$n\to+\infty$ by~\eqref{gtf} for $\t
X$, one infers that
$u(t_n,\cdot)\le 2 v(t_n,\cdot +\rho)$ in $\R$ for all $n$ large enough. Hence,
for~$n$ large
enough,~$u(t,x)\le\min\big(2v(t,x+\rho),1\big)$ for all $t\ge t_n$ and $x\in\R$
by the maximum
principle, since~$\min\big(2v(t,x+\rho),1\big)$ is a generalized supersolution
of~\eq{P=f} by the
last hypothesis in~\eqref{eq:f/u}. By letting~$n\to+\infty$, one concludes in particular that
$0<u(0,0)\le 2v(0,\rho)$ and the limit~$\rho\to+\infty$ leads to a
contradiction, since~$v(0,+\infty)=0$.

As a consequence of~\eqref{tildecpm}, by considering all fronts $v$ given by
Proposition~\ref{pro:exsupercrit}, one derives $\limsup_{t\to\pm\infty}X(t)/t\leq c_\pm$,
with $c_\pm$ given by~\eqref{eq:speeds*}. Proposition~\ref{pro:spreading} therefore implies
that $u$ has the desired asymptotic past speed $c_-=2\sqrt{\mu_-}$, as well as the desired
asymptotic future speed $c_+=2\sqrt{\mu_+}$ in the case~$\mu_-\ge\mu_+$.

Suppose now that $\mu_-<\mu_+$. In this case the future speed of
$u$ will not coincide
with the critical speed~$2\sqrt{\mu_+}$ for the limiting problem as $t\to+\infty$, and thus the
spreading property given by Proposition~\ref{pro:spreading} does not allow us to conclude. In
order to show that $u$ has the desired future speed in this case, we make use of the following
lower bound on the exponential decay of the transition front~$u$:
\Fi{decay>}
\forall\,t\in\R,\ \forall\,\lambda>\sqrt{\mu_-},\quad
\inf_{x>0}e^{\lambda x}u(t,x)>0.
\Ff
We postpone the proof of this estimate until the next subsection (see Corollary~\ref{cor:exp}
below) and we complete the proof of the limit
$$\frac{X(t)}{t}\to c_+=\sqrt{\mu_-}+\frac{\mu_+}{\sqrt{\mu_-}}\ \hbox{ as }t\to+\infty.$$
By hypothesis~\eqref{fpm}, for $\e\in(0,1)$, there exists $T_\e\in\R$
such that $f(t,s)\ge(1-\e)f_+(s)$ for all $t>T_\e$ and~$s\in(0,1)$. For any given
$\kappa\in(0,\sqrt{(1-\e)\mu_+})$, the homogeneous problem~\eqref{eq:P=f+e} admits a
standard traveling front $\phi_{\kappa,\e}(x-c_{\kappa,\e}t)$ connecting $0$ and $1$ with
speed
$$c_{\kappa,\e}=\kappa+\frac{(1-\e)\mu_+}\kappa.$$
The function $\phi_{\kappa,\e}(x-c_{\kappa,\e}t)$ is thus a subsolution to~\eq{P=f} for
$t>T_\e$, $x\in\R$. It is also well known~\cite{aw,u} that $\phi_{\kappa,\e}$ decays like
$e^{-\kappa x}$ as $x\to+\infty$. Consequently, for all choices of $\e\in(0,1)$ and~$\kappa$
satisfying~$\sqrt{\mu_-}<\kappa<\sqrt{(1-\e)\mu_+}$, we deduce from~\eqref{eq:decay>} that
$u(T_\e,x)>\phi_{\kappa,\e}(x)$ for $x$ larger than some~$x_{\kappa,\e}\in\R$. Hence, since
$\inf_{x\le x_{\kappa,\e}}u(T_\e,x)>0$ by~\eqref{gtf} and the continuity and positivity of~$u$,
there exists~$\delta_{\kappa,\e}\in(0,1)$ such
that~$u(T_\e,\cdot)>\delta_{\kappa,\e}\phi_{\kappa,\e}$ in $\R$. Notice that
$\delta_{\kappa,\e}\phi_{\kappa,\e}(x-c_{\kappa,\e}t)$ is also a subsolution to~\eq{P=f}
for~Ê$t>T_\e$ and $x\in\R$, by the last hypothesis in~\eqref{eq:f/u}. The parabolic comparison
principle then yields~$u(t,x)\geq\delta_{\kappa,\e}\phi_{\kappa,\e}(x-c_{\kappa,\e}(t-T_\e))$ for
all $t>T_\e$ and $x\in\R$. This eventually implies
\be\label{ckappaeps}
\liminf_{t\to+\infty}\frac{X(t)}t\geq c_{\kappa,\e}=\kappa+\frac{(1-\e)\mu_+}\kappa
\ee
for all $0<\e<1$ and $\kappa$ satisfying $\sqrt{\mu_-}<\kappa<\sqrt{(1-\e)\mu_+}$, from which
the desired result follows by letting~$\kappa\to\sqrt{\mu_-}$ and then $\e\to0$.

The convergence to the profile of the critical standard front for the homogeneous
non\-linearities~$f_-$ as~$t\to-\infty$ follows from the criticality property, analogously to the
convergence as~$t\to+\infty$ in the case of critical future speed treated above. The same
situation holds for the convergence as $t\to+\infty$ in the case~$\mu_+\leq\mu_+$.
As mentioned after the statement of Theorem~\ref{th1}, in the case
$\mu_+>\mu_-$ we derive the following weaker result.

\begin{proposition}\label{pro:criticprofile}
In addition to the hypotheses of Theorem~$\ref{th1}$, assume that $f_+$ is $C^2$
and concave and that $\mu_+>\mu_-$. Then there exists a sequence $\seq{t}$
diverging to $+\infty$ and a bounded function $\xi:\R\to\R$ such that the
critical transition front $u$ with asymptotic speeds
$c_-=2\sqrt{\mu_-}$ and $c_+=\sqrt{\mu_-}+\mu_+/\sqrt{\mu_-}$
given by Theorem~$\ref{th1}$ satisfies
$u(t_n,X(t_n)+\xi(t_n)+\cdot)\to\phi_{c_+}$ in $C^2(\R)$ as
$n\to+\infty$.
\end{proposition}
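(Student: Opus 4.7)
The plan is to extract a subsequence $t_n\to+\infty$ along which the appropriate translates of $u$ converge to an entire solution of the limit equation, and then to identify this solution with $\phi_{c_+}$. First normalize $\xi(t)$ by the requirement $u(t,X(t)+\xi(t))=1/2$, which is well defined and bounded by the strict $x$-monotonicity of $u$ established in Section~\ref{sec:B} together with~\eqref{gtf}. Given any sequence $\tau_n\to+\infty$, parabolic regularity extracts a subsequence, still denoted $(t_n)$, along which $v_n(t,x):=u(t+t_n,x+X(t_n)+\xi(t_n))$ converges in $C^{1,2}_{\rm loc}$ to an entire solution $\tilde u$ of $\tilde u_t=\tilde u_{xx}+f_+(\tilde u)$ with $0<\tilde u<1$, $\tilde u(0,0)=1/2$, $\tilde u_x<0$, and $\tilde u(0,-\infty)=1$, $\tilde u(0,+\infty)=0$.

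For each $\kappa\in(0,\sqrt{\mu_-})$ I would compare $u$ with the supercritical front $u_\kappa$ given by Proposition~\ref{pro:NR1}. Since $u$ is obtained as a local limit of solutions issued from Heaviside initial data, which are steeper than any $[0,1]$-valued profile, Angenent's zero-counting theorem~\cite{a} guarantees that $u(t,\.)-u_\kappa(t,\.+\rho)$ changes sign at most once in $\R$ for every $\rho\in\R$ and $t\in\R$. Aligning at the $1/2$-level, i.e.\ choosing $\rho=\tilde X_\kappa(t)-\tilde X(t)$ with $\tilde X:=X+\xi$ and $\tilde X_\kappa$ defined analogously for $u_\kappa$, one obtains
$$u(t,y+\tilde X(t))\ge u_\kappa(t,y+\tilde X_\kappa(t))\text{ if }y\le0,\quad u(t,y+\tilde X(t))\le u_\kappa(t,y+\tilde X_\kappa(t))\text{ if }y\ge0.$$
Proposition~\ref{pro:exsupercrit} (applied with $\kappa_-=\kappa_+=\kappa$, admissible as $\kappa<\sqrt{\mu_-}\le\sqrt{\mu_+}$) yields $u_\kappa(t,\.+\tilde X_\kappa(t))\to\phi_{c_\kappa^+}$ in $C^2(\R)$ as $t\to+\infty$, with $c_\kappa^+:=\kappa+\mu_+/\kappa$ and $\phi_{c_\kappa^+}(0)=1/2$. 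Letting $n\to+\infty$ and then $\kappa\nearrow\sqrt{\mu_-}$, and exploiting the uniform convergence $\phi_{c_\kappa^+}\to\phi_{c_+}$ on $\R$ (which holds by continuous dependence of the supercritical standard fronts on their speed, since $c_+>2\sqrt{\mu_+}$ is strictly supercritical for $f_+$ when $\mu_+>\mu_-$), one derives
$$\tilde u(0,y)\ge\phi_{c_+}(y)\text{ for }y\le0,\qquad\tilde u(0,y)\le\phi_{c_+}(y)\text{ for }y\ge0.$$

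To upgrade these one-sided inequalities to the equality $\tilde u(0,\.)\equiv\phi_{c_+}$, I would follow the strategy of Step~3 in the proof of Proposition~\ref{pro:exsupercrit}. It suffices to show that $\tilde u(0,\.)$ has the same exponential decay rate $\sqrt{\mu_-}$ at $+\infty$ as $\phi_{c_+}$: Proposition~4.3 of~\cite{NR1} then sandwiches $\tilde u$ between two translates of $\phi_{c_+}$, and the Liouville-type Theorem~3.5 of~\cite{bh1}---for which the concavity of $f_+$ is crucial---identifies $\tilde u(t,x)=\phi_{c_+}(x-c_+t-b)$; the normalization $\tilde u(0,0)=1/2$ forces $b=0$, and the $C^2(\R)$ convergence follows from parabolic estimates and monotonicity. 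The upper bound on the decay rate of $\tilde u(0,\.)$ is immediate from $\tilde u(0,y)\le\phi_{c_+}(y)\sim Ae^{-\sqrt{\mu_-}y}$.

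The principal obstacle is the matching exponential lower bound, which requires preventing $\tilde u(0,\.)$ from decaying strictly faster than $e^{-\sqrt{\mu_-}y}$. The natural input is the estimate~\eqref{decay>} (Corollary~\ref{cor:exp}), which at each $t$ gives $\inf_{x>0}e^{\lambda x}u(t,x)>0$ for every $\lambda>\sqrt{\mu_-}$. A diagonal extraction of the subsequence $(t_n)$ over a countable set of $\lambda$'s decreasing to $\sqrt{\mu_-}$---arranged so that the corresponding infima stay bounded away from $0$---transmits the estimate to $\tilde u(0,\.)$ and pins down its logarithmic decay rate to be exactly $\sqrt{\mu_-}$. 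Converting this into the pointwise two-sided decay control required by Proposition~4.3 of~\cite{NR1} is the technical heart of the argument, and its delicacy is precisely what restricts the conclusion of Proposition~\ref{pro:criticprofile} to convergence along a particular sequence rather than for all $t\to+\infty$.
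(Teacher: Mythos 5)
Your first half is sound and in fact mirrors what the paper does elsewhere (Section~\ref{sec:BC}): the steepness of the critical front against the supercritical fronts $u_\kappa$, combined with Proposition~\ref{pro:exsupercrit} and the limit $\kappa\nearrow\sqrt{\mu_-}$, does give $\tilde u(0,y)\ge\phi_{c_+}(y)$ for $y\le0$ and $\tilde u(0,y)\le\phi_{c_+}(y)$ for $y\ge0$. But in the case at hand $c_+>2\sqrt{\mu_+}$, so $\phi_{c_+}$ is \emph{not} the steepest entire solution of the limit equation and the reverse inequalities do not come for free; everything hinges on your exponential lower bound, and that step has a genuine gap. Corollary~\ref{cor:exp} gives $\inf_{x>0}e^{\lambda x}u(t,x)>0$ with the infimum anchored at the fixed spatial origin. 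What you need is a bound anchored at the moving front position: writing $x=X(t_n)+\xi(t_n)+y$, the estimate yields $u(t_n,X(t_n)+\xi(t_n)+y)\ge m_\lambda(t_n)\,e^{-\lambda X(t_n)}e^{-\lambda(\xi(t_n)+y)}$, and since $X(t_n)\sim c_+t_n\to+\infty$ the prefactor $e^{-\lambda X(t_n)}$ collapses to $0$ regardless of how the subsequence is chosen. No diagonal extraction over $\lambda$'s can repair this: the obstruction is not the $t$-dependence of the constants but the recentering itself. A uniform-in-$t$ decay estimate centered at $X(t)$ is exactly what is available for the fronts $u_\kappa$ (via~\eqref{eq:expdecay}) and exactly what is \emph{not} available for the critical front, which is why the full convergence as $t\to+\infty$ remains open.

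The paper's proof takes an entirely different route that sidesteps decay estimates. It makes $X$ Lipschitz, sets $c=X'$ on $\R_+$, and invokes Proposition~4.4 of~\cite{NR2} to choose the sequence $(t_n)$ so that $c(t_n+\cdot)$ converges weak-$\star$ to some $\tilde c$ whose Ces\`aro means as $t\to-\infty$ equal the upper mean $\lceil c\rceil\ge c_+>2\sqrt{\mu_+}$. The limit $\tilde u$ is then a transition front for the $f_+$-equation with supercritical asymptotic \emph{past} speed $\lceil c\rceil$, and Theorem~1.7 and Remark~4.1 of~\cite{hr} (this is where the concavity of $f_+$ enters) force $\tilde u$ to be the standard front $\phi_{\lceil c\rceil}$; a separate comparison with the $u_\kappa$, using the steepness of $u$, gives $\lceil c\rceil\le\kappa+\mu_+/\kappa$ for all $\kappa<\sqrt{\mu_-}$ and hence $\lceil c\rceil=c_+$. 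If you want to salvage your argument you would need to prove the front-centered lower bound $\liminf_{n\to\infty}\inf_{y>0}e^{\lambda y}u(t_n,X(t_n)+y)>0$ along a suitable sequence, which is a substantially harder statement than Corollary~\ref{cor:exp}.
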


\begin{proof}
We keep the same notation as above. Property \eqref{Xtau} implies that the
linear interpolation of the
function $\Z\ni z\mapsto X(z)$ is Lipschitz-continuous and that the difference
between such function and $X$ is bounded on $\R$. Hence, it is not restrictive
to assume that $u$ satisfies \eqref{gtf} with $X$ Lipschitz-continuous.
It makes then sense to consider the a.e.~defined derivative of $X$, which can
be interpreted as an instantaneous speed of $u$. We will apply to the function
$X'$ a result quoted from \cite{NR2} concerning its {\em upper mean}. We recall that
the upper mean of a function $g\in L^\infty(\R)$ is defined by
$$\lceil g\rceil:=\lim_{t\to+\infty}\,\sup_{\tau\in\R}
\frac{1}{t}\int_\tau^{\tau+t}g(s)ds.$$
In order to focus on positive times, we actually define
$$c(t):=\begin{cases}X'(t) & \text{for a.e. }t>0\\
         0 & \text{for }t\leq0.
        \end{cases}$$
Applying Proposition 4.4 of \cite{NR2} to the function $t\mapsto-c(-t)$, we
infer the existence of a sequence~$\seq{t}$ such that $c(t_n+\.)$ converges as
$n\to+\infty$, in the $L^\infty(\R)$ weak-$\star$ topology,
to some function~$\t c\in L^{\infty}(\R)$ such that
$(1/t)\int_0^t \t c(s)ds\to\lceil c\rceil$ as
$t\to-\infty$. Since
$$\lceil c\rceil\geq\lim_{t\to+\infty}
\frac{1}{t}\int_0^tc(s)ds=\lim_{t\to+\infty}
\frac{X(t)-X(0)}{t}=c_+>0,$$
the sequence $\seq{t}$ necessarily diverges to $+\infty$ as $n\to+\infty$.
As $n\to+\infty$, (a subsequence of)~$u(t_n+\cdot,X(t_n)+\cdot)$ converges
locally uniformly in $\R\times\R$ to a solution $0\le\t u\le1$ of
the limiting equation with nonlinearity $f_+$. We claim that $\t u$ is a transition front
connecting~$0$ and~$1$ for that equation, satis\-fying~\eqref{gtf}
with
$\t X(t)=\int_0^t\t c(s)ds$.
Indeed, for any given $\epsilon\in(0,1)$, let~$M\ge0$ be such
that~$u(\tau,X(\tau)+x)\ge1-\epsilon$ (resp.~$u(\tau,X(\tau)+x)\le\epsilon$) for all $t\in\R$ and
$x\le-M$ (resp.~$x\ge M$). Now, for any~$t\in\R$ and $x\le-M$,
$$\t u(t,\t X(t)+x)=\lim_{n\to+\infty}u\Big(t_n+t,X(t_n)+\!\int_0^t\!\!\t c(s)ds+x\Big)=
\lim_{n\to+\infty}u\Big(t_n+t,X(t_n)+\!\int_0^t\!\!c(t_n\!+\!s)ds+x\Big).$$
But, for $n$ large enough, one has $X(t_n)+\int_0^tc(t_n+s)ds=X(t_n+t)$ since $t_n\to+\infty$,
whence $u(t_n+t,X(t_n)+\int_0^tc(t_n+s)ds+x)=u(t_n+t,X(t_n+t)+x)\ge1-\epsilon$ for $n$
large enough. Therefore,~$\t u(t,\t X(t)+x)\ge1-\epsilon$. Similarly, one can prove that
$\t u(t,\t X(t)+x)\le\epsilon$ for all $t\in\R$ and $x\ge M$.

As a consequence, the transition front $\t u$ admits an asymptotic past speed equal
to $\lim_{t\to-\infty}\t X(t)/t=\lceil c\rceil$, with $\lceil c\rceil\geq c_+
=\sqrt{\mu_-}+\mu_+/\sqrt{\mu_-}>2\sqrt{\mu_+}$ (remember that $0<\mu_-<\mu_+$ here).
We then know from Theorem~1.7 of~\cite{hr} that the transition front $\t u$ also admits an
asymptotic future
speed $\t c_+=\lim_{t\to+\infty}\t X(t)/t$ which is larger than or equal to the past speed,
namely,~$\t c_+\geq\lceil c\rceil$. The reverse inequality
is a consequence of the definition of upper mean, and therefore~$\t u$ has past
and
future speeds both equal to $\lceil c\rceil$. It follows then from
Remark~4.1 of~\cite{hr} that~$\t u$ is a standard traveling front of the type
$\t u(t,x)=\phi_{\lceil c\rceil}(x-\lceil c\rceil t)$ for the limiting equation
with nonlinearity $f_+$.
Since $\phi_{\lceil c\rceil}(-\infty)=1$, $\phi_{\lceil c\rceil}(+\infty)=0$
and $0<u<1$ is decreasing in~$x$, the (subsequence of)
$u(t_n,X(t_n)+\xi(t_n)+\cdot)$ actually converges to $\phi_{\lceil c\rceil}$
uniformly in $\R$, and then in~$C^2(\R)$ by parabolic estimates.

It then only remains to show that $\lceil c\rceil=c_+$, i.e., that $\lceil
c\rceil\leq c_+$. Consider the same family $(u_\kappa)_{0<\kappa<\sqrt{\mu_-}}$ as in
Proposition~\ref{pro:NR1}.
For $\kappa\in(0,\sqrt{\mu_-})$, call
$$X_\kappa(t):=\int_0^t \left(\kappa+\frac{\mu(s)}{\kappa}\right)ds\ \hbox{
for all }t\in\R,$$
and let $\xi_\kappa$ be a bounded function such that
$u_\kappa(t,X_\kappa(t)+\xi_\kappa(t))=1/2$ for all $t\in\R$.
It follows from the monotonicity in $x$ of $u_\kappa$ that
$u_\kappa(t,X_\kappa(t)+\xi_\kappa(t)+x)\geq1/2$ for all
$(t,x)\in\R\times\R_-$.
Moreover, letting~$L\in\R$ be such that $u(t,X(t)+L)<1/2$ for all
$t\in\R$, the steepness property of the critical front $u$
yields~$u_\kappa(t,X_\kappa(t)+\xi_\kappa(t)+x)\geq u(t,X(t)+L+x)$ for all
$(t,x)\in\R\times\R_+$.
Hence, for all $\tau\in\R$ and $x\in\R$,
$2 u_\kappa(\tau,X_\kappa(\tau)+\xi_\kappa(\tau)+x)\geq
u(\tau,X(\tau)+L+x)$. The comparison principle then yields
$$2 u_\kappa(\tau+t,X_\kappa(\tau)+\xi_\kappa(\tau)+x)\geq
u(\tau+t,X(\tau)+L+x)\quad\text{for all }\tau\in\R,\
(t,x)\in\R_+\times\R.$$
From this inequality and the fact that $u_\kappa$ and $u$ fulfil \eqref{gtf}-\eqref{infsup}
with $X_\kappa$ and $X$ respectively, one readily deduces the existence of a
positive constant
$C$ such that
$$X(\tau+t)-X(\tau)\leq X_\kappa(\tau+t)-X_\kappa(\tau)+C=
\int_\tau^{\tau+t} \left(\kappa+\frac{\mu(s)}{\kappa}\right)ds+C
\quad\text{for all
}\tau\in\R,\ t\in\R_+.$$
As a consequence, recalling that $c=0$ on $\R_-$, we derive $\lceil c\rceil\leq
\kappa+\mu_+/\kappa$, from which
$\lceil c\rceil\leq c_+$ follows by letting $\kappa\nearrow\sqrt{\mu_-}$.
\end{proof}

\subsubsection{Segment $(AB)$: critical past speed and non-minimal future speed}
\label{sec:AB}

It remains to consider the case $\kappa_-=\sqrt{\mu_-}$ and
$0<\kappa_+<\min(\sqrt{\mu_-},\sqrt{\mu_+})$, that is,
$c_-=2\sqrt{\mu_-}$ and~$c_+=\kappa_++\mu_+/\kappa_+$
with $0<\kappa_+<\sqrt{\ul\mu}$.
We define the front $u$ as in the proof of Proposition~\ref{pro:exsupercrit}, taking as $u_1$
the front with minimal past and future speeds of the previous case and as $u_2$ the front
given by Proposition~\ref{pro:NR1} with $\kappa=\kappa_+$. Notice that $u_1$ has strictly
slower asymptotic past and future speeds than~$u_2$. Moreover, by the criticality property,
$u_1(0,x)\leq u_2(0,x)$ for $x$ larger than some $x_0$, whence there is~$M\geq1$ such that
$u_1(0,x)\leq M u_2(x,0)$ for all $x\in\R$. These are the properties that allow one to apply the
arguments of the first step in the proof of Proposition~\ref{pro:exsupercrit} and to conclude that
$u$ has the desired past and future speeds.

Property~\eq{phi+-} is also a consequence of the arguments in the proof of
Proposition~\ref{pro:exsupercrit}. More precisely, we have seen there that
$|u(t,x)-u_1(t,x)|\to0$ as $t\to-\infty$ uniformly in $x\in\R$, and thus the convergence
as~$t\to-\infty$ in~\eq{phi+-} follows because we know from the previous case that it is
satisfied by $u_1$. The convergence as $t\to+\infty$ is proved in Step~3 of the proof of
Proposition~\ref{pro:exsupercrit}.


\subsection{Exponential behaviour of supersolutions}\label{sec23}

In this subsection we derive a sharp lower bound on the exponential decay of supersolutions
of homogeneous equations, in the spirit of Lemma 3.1 in~\cite{rr}. Let us mention that the
arguments can be extended to higher dimensional cases.

\begin{theorem}\label{thm:exp}
Let $u$ be a nonnegative classical supersolution of
$u_t\ge u_{xx}+g(u)$ for $t\leq0$ and $x\in\R$,
with $g:\R\to\R$ of class $C^1$ such that $g(0)=0$ and $g'(0)>0$, and
assume that there exists a function~$X:(-\infty,0]\to\R$ for which
\Fi{gamma}
\inf_{t\leq0,\;x\leq0}u(t,x+X(t))>0\ \hbox{ and }\
c:=\limsup_{t\to-\infty}\frac{X(t)}t<+\infty.
\Ff
Then, either $c<2\sqrt{g'(0)}$ and $\inf_{t\leq0,\;x\in\R}u(t,x)>0$, or $c\geq2\sqrt{g'(0)}$ and
$$\forall\,\lambda>\frac{c+\sqrt{c^2-4g'(0)}}2,\quad \inf_{x\geq0}e^{\lambda x}u(0,x)>0.$$
\end{theorem}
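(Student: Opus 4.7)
The plan is to dominate $u$ from below, on each time window $[-T,0]$, by the truncation of the linear heat flow of $w_t=w_{xx}+(g'(0)-\delta)w$ started from a step datum, and then to optimise $T$ using the constraint on $X$. Fix $\delta\in(0,g'(0))$ small and use $g\in C^1$ with $g(0)=0<g'(0)$ to pick $\eta\in(0,\gamma]$ so small that $g(s)\ge(g'(0)-\delta)s$ on $[0,\eta]$ and $g(\eta)>0$. For every $T>0$ let $v_T$ solve the linear Cauchy problem on $(-T,0]\times\R$ with $v_T(-T,\cdot)=\eta\,\mathbf{1}_{(-\infty,X(-T)]}$; the cap $\t v_T:=\min(v_T,\eta)$ is a generalised sub-solution of $u_t=u_{xx}+g(u)$, because on $\{v_T\le\eta\}$ the inequality $g(v_T)\ge(g'(0)-\delta)v_T$ upgrades the linear equation into the nonlinear one, on the plateau $\{v_T\ge\eta\}$ the constant $\eta$ satisfies $0<g(\eta)$, and the minimum of sub-solutions is again a sub-solution. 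Since $u(-T,\cdot)\ge\gamma\ge\eta\ge\t v_T(-T,\cdot)$ on $(-\infty,X(-T)]$ and $u(-T,\cdot)\ge 0=\t v_T(-T,\cdot)$ elsewhere, the standard parabolic comparison principle yields $u(0,x)\ge\min\big(v_T(0,x),\eta\big)$ for every $x\in\R$ and $T>0$, with the explicit heat-kernel representation
$$v_T(0,x)=\eta\,e^{(g'(0)-\delta)T}\,\Phi\!\Big(\tfrac{X(-T)-x}{\sqrt{2T}}\Big),$$
$\Phi$ being the Gaussian distribution function.

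From $\limsup_{t\to-\infty}X(t)/t=c$, for every $\varepsilon>0$ there exists $T_\varepsilon$ with $X(-T)\ge -(c+\varepsilon)T$ for $T\ge T_\varepsilon$; combining this with the elementary tail bound $\Phi(-z)\ge C(1+z)^{-1}e^{-z^2/2}$ for $z\ge 0$ gives, for $x\ge -(c+\varepsilon)T$ and $T\ge T_\varepsilon$,
$$v_T(0,x)\;\gtrsim\;\frac{\sqrt{T}}{\sqrt{T}+x+(c+\varepsilon)T}\;\exp\!\Big\{\Big[(g'(0)-\delta)-\tfrac{(c+\varepsilon)^2}{4}\Big]T\,-\,\tfrac{(c+\varepsilon)x}{2}\,-\,\tfrac{x^2}{4T}\Big\}.$$
The whole analysis now reduces to inspecting this Laplace-type expression.

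In the sub-critical case $c<2\sqrt{g'(0)}$, I choose $\varepsilon,\delta$ so small that $(c+\varepsilon)^2/4<g'(0)-\delta$; the coefficient of $T$ in the exponent is then strictly positive, and letting $T\to+\infty$ forces $v_T(0,x)\to+\infty$, hence $\min(v_T(0,x),\eta)=\eta$ for $T$ large and $u(0,x)\ge\eta$. Since the argument is autonomous in $t$, applying it on $[-T,t_0]$ for arbitrary $t_0\le 0$ gives $u(t_0,x)\ge\eta$ throughout $(-\infty,0]\times\R$, so $\inf u\ge\eta>0$.

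In the super-critical case $c\ge 2\sqrt{g'(0)}$, fix $\lambda>\lambda_+:=(c+\sqrt{c^2-4g'(0)})/2$; by continuity, $\varepsilon,\delta>0$ can be chosen so small that the perturbed threshold $\lambda_+^{\varepsilon,\delta}:=\big((c+\varepsilon)+\sqrt{(c+\varepsilon)^2-4(g'(0)-\delta)}\big)/2$ still satisfies $\lambda_+^{\varepsilon,\delta}<\lambda$. With $\mu:=(c+\varepsilon)^2/4-(g'(0)-\delta)>0$, the Laplace optimum of the exponent, attained at $T_\star:=x/(2\sqrt{\mu})$, has value $-\lambda_+^{\varepsilon,\delta}x$, and so $v_{T_\star}(0,x)\gtrsim x^{-1/2}e^{-\lambda_+^{\varepsilon,\delta}x}\ge C\,e^{-\lambda x}$ for $x$ large. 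Combined with $u\ge\gamma$ on $\{x\le X(0)\}$ this yields $\inf_{x\ge 0}e^{\lambda x}u(0,x)>0$. The main obstacle is precisely the sharpness of this optimisation: the sub-solution delivers only an effective decay rate $\lambda_+^{\varepsilon,\delta}$, strictly larger than $\lambda_+$, and it is only by letting $\varepsilon,\delta\to 0^+$ that the sharp threshold $\lambda>\lambda_+$ stated in the theorem can be covered.
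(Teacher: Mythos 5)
Your overall architecture — bounding $u$ from below by a linearized flow released from a step datum at time $-T$ and then optimizing over $T$ by a Laplace argument — is an attractive alternative to the paper's route, and the optimization itself is correct: the saddle point $T_\star=x/(2\sqrt\mu)$ does produce the exponent $-\lambda_+^{\varepsilon,\delta}x$, which converges to the sharp threshold as $\varepsilon,\delta\to0$. However, there is a genuine gap at the comparison step. The assertion that ``the minimum of sub-solutions is again a sub-solution'' is false: it is the \emph{maximum} of subsolutions that is a generalized subsolution (and the minimum of \emph{super}solutions that is a supersolution). Concretely, since $v_T(t,\cdot)$ is decreasing in $x$, along the interface $\{v_T=\eta\}$ the function $\t v_T=\min(v_T,\eta)$ has a concave corner: $\partial_x\t v_T$ jumps from $0$ down to $\partial_xv_T<0$, so $-\partial_{xx}\t v_T$ carries a \emph{positive} Dirac mass and the distributional inequality $\partial_t\t v_T-\partial_{xx}\t v_T-g(\t v_T)\le0$ fails there. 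One cannot invoke the parabolic comparison principle for $\t v_T$ against $u$. Nor is the conclusion $u\ge\min(v_T,\eta)$ self-evident by other means: the delicate region is exactly $\{v_T\ge\eta\}\cap\{u<\eta\}$ near the interface, and for $\delta=0$ the Bramson logarithmic delay of the nonlinear front behind the linearized $\eta$-level set shows that such a pointwise bound is genuinely at risk; any rescue must exploit $\delta>0$ and large $T$ quantitatively, which your argument does not do.

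To repair this you would have to run the comparison only on a region where $v_T\le\eta$, e.g.\ $\{x\ge\sigma_T(t)\}$ with $\sigma_T(t)$ the $\eta$-level set of $v_T$, and supply the lateral boundary condition $u\ge\eta$ on $x=\sigma_T(t)$ from the hypothesis \eqref{eq:gamma} together with the strong maximum principle — which is essentially what the paper does, with a subsolution $\beta\,\eta_n(t,x-(c+\e)t)$, $\eta_n(t,x)=(1-x/\ln(t+2n))^{\lambda\ln(t+2n)}$, that is a classical subsolution of the linearization on the moving half-line $x>(c+\e)t$, vanishes identically beyond $x=(c+\e)t+\ln(t+2n)$ (a $C^1$ junction, so no corner problem), and is dominated on the lateral boundary $x=(c+\e)t$ by $\inf u$ there. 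In the subcritical case the paper instead compares with the \emph{nonlinear} Cauchy problem started from $m\1_{(-\infty,0]}$ and invokes the Aronson--Weinberger spreading theorem, again avoiding the capped linear solution. A smaller point: your displayed lower bound for $v_T(0,x)$ is only valid for $x$ to the right of the step, so for $c\le0$ (still subcritical) the fixed point $x$ may lie to the left of $X(-T)$ and you should use $\Phi\ge1/2$ there instead; and for $0\le x\le x_0$ in the supercritical case the positivity of $u(0,\cdot)$ on compacts requires the strong maximum principle, not just $u\ge\gamma$ on $\{x\le X(0)\}$. These are easily fixed; the comparison step is not.
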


\begin{proof} We distinguish the two cases. The interesting case is
$c\geq2\sqrt{g'(0)}$, the other one being a consequence of the
standard spreading result.

{\em Case $c<2\sqrt{g'(0)}$.}
Call
$m:=\inf_{t\leq0,\;x\leq0}u(t,x+X(t))>0$, and let $v$ be the solution
of the Cauchy pro\-blem $v_t=v_{xx}+g(v)$, $t>0$, $x\in\R$, with initial datum
$v(0,x)=m\1_{(-\infty,0]}(x)$, $x\in\R$.
Take $\gamma\in(c,2\sqrt{g'(0)})$. The spreading result \cite{aw} and the fact
that~$v$ is nonincreasing in $x$, because so is its initial datum, imply
\Fi{spreading1d}
m':=\liminf_{t\to+\infty}\Big(\inf_{x\le\gamma t}\,v(t,x)\Big)>0.
\Ff
For any $s\leq 0$, the function $v^{-s}(t,x):=v(t-s,x-X(s))$
lies below $u$
at time $t=s$. Then, by the comparison principle, we derive that, for all $s\leq t\leq0$ and
$x\in\R$, $u(t,x)\geq v(t-s,x-X(s))$.
From this, since for fixed $t\leq0$ and~$x\in\R$,
$x-X(s)\leq\gamma(t-s)$ for $-s$ large enough, letting
$s\to-\infty$ and using~\eq{spreading1d} we get $u(t,x)\geq m'$.

{\em Case $c\geq2\sqrt{g'(0)}$.} Take $\lambda>(c+\sqrt{c^2-4g'(0)})/2>0$. Let
$(\eta_n)_{n\ge 2}$ be the functions defined by
$$\eta_n(t,x):=\left(1-\frac x{\ln(t+2n)}\right)^{\lambda\ln(t+2n)}\quad\hbox{for }
t\geq-n,\ 0\leq x<\ln(t+2n).$$
For $\e>0$ and $n\ge 2$, calling for short $\rho:=1-x/\ln(t+2n)\in(0,1]$, we
find that $\eta_n$
satisfies in its domain of definition
\[\begin{split}
\frac{\partial_t\eta_n-\partial_{xx}\eta_n-(c+\e)\partial_x\eta_n}{\eta_n} &=
\frac{\lambda}{t+2n}(\ln\rho+\rho^{-1}-1)-\lambda^2\rho^{-2}
+\frac\lambda{\ln(t+2n)}\rho^{-2}+\lambda(c+\e)\rho^{-1}\\
&\leq \lambda\rho^{-2}\left(\frac{\rho-\rho^2}{t+2n}+\frac1{\ln(t+2n)}\right)-\lambda^2\rho^{-2}
+\lambda(c+\e)\rho^{-1}.
\end{split}\]
For any given $n\ge 2$, there exists then $h_n>0$ independent of $t\ge-n$
and~$x\in[0,\ln(t+2n))$ such that
$$\frac{\partial_t\eta_n-\partial_{xx}\eta_n-(c+\e)\partial_x\eta_n}{\eta_n}\leq
\lambda\frac{c+\e}\rho-\frac{\lambda^2- \lambda h_n}{\rho^2},\quad t>-n,\ 0\le x<\ln(t+2n),$$
with $h_n\to0$ as $n\to+\infty$. If $h_n<\lambda$ then the right-hand side above is increasing
in~$\rho\in(0,2(\lambda-h_n)/(c+\e)]$. Notice that $2\lambda/(c+\e)>1$ for $\e>0$ small
enough, because~$\lambda>c/2$. Hence, for $\e>0$ small
enough,~$2(\lambda-h_n)/(c+\e)>1$ for $n$ large and thus, since $\rho\in(0,1]$, under such
conditions we find that
$$\frac{\partial_t\eta_n-\partial_{xx}\eta_n-(c+\e)\partial_x\eta_n}{\eta_n}
\leq c\lambda-\lambda^2+\lambda(\e+h_n),\quad t>-n,\ 0\le x<\ln(t+2n).$$
On the other hand, one has $c\lambda-\lambda^2<g'(0)$, whence there exist $k>0$ and
$\e>0$ small enough, and $n_0\in\N$ large enough, such that, for all $n\ge n_0$, the function
$\eta_n$ satisfies
$$\partial_t\eta_n-\partial_{xx}\eta_n-(c+\e)\partial_x\eta_n
\leq(g'(0)-k)\eta_n,\quad t>-n,\ 0\le x<\ln(t+2n).$$
Notice that $\eta_n$ is bounded by $1$, and thus, for $\beta>0$ small enough independent of
$n$ (large enough), the function $u_n$ defined by
$$u_n(t,x):=
\begin{cases}\beta\,\eta_n(t,x-(c+\e)t) & \text{if }t\geq-n,\ 0\leq x-(c+\e)t<\ln(t+2n)\\
 0 & \text{if }t\geq-n,\ x-(c+\e)t\ge\ln(t+2n).\end{cases}$$
is a generalized subsolution of $v_t=v_{xx}+g(v)$ in the domain $-n<t<0$, $x>(c+\e)t$. We
claim that~$\beta$ can be chosen in such a way that, for $n$ large enough, $u_n$ lies below
$u$ on the parabolic boundary of this~$(t,x)$-domain. For the initial time $t=-n$, we see that
$u_n(-n,x)\leq\beta\times\1_{[-(c+\e)n,-(c+\e) n+\ln n]}(x)$ for every~$x\geq-(c+\e)n$,
whereas, at the boundary $x=(c+\e) t$, $u_n(t,(c+\e) t)=\beta$ for all $-n<t<0$. On the other
hand, by~\eqref{eq:gamma} there exists $T<0$ such that
$$\inf_{t\leq T,\,(c+\e)t\leq x\leq(c+\e)t+\ln(-t)}u(t,x)>0.$$
Lastly, $\inf_{-T\le t\le 0}u(t,(c+\e)t)>0$ by the strong maximum principle, and thus the claim
follows. We can therefore apply the comparison principle and infer that, for $n$ large enough,~
$u_n(t,x)\leq u(t,x)$ for all~$-n\leq t\leq0$ and $x\geq (c+\e)t$. In particular,
$$\forall\,x\in[0,\ln(2n)),\quad u(0,x)\geq\beta\eta_n(0,x)
=\beta\left(1-\frac{x}{\ln(2n)}\right)^{\lambda\ln(2n)},$$
from which we eventually derive $u(0,x)\geq\beta e^{-\lambda x}$ for $x\geq0$, by letting
$n\to+\infty$.
\end{proof}

\begin{remark}{\rm One cannot expect to get in general a better lower bound for the
exponential decay rate of supersolutions than the one in \thm{exp}. Indeed,
if $g$ is a positive
constant, then for all $c\geq2\sqrt g$, the function $u$ defined by $u(t,x):=e^{-\lambda(x-ct)}$,
with $\lambda=(c+\sqrt{c^2-4g})/2$, satis\-fies~$u_t=u_{xx}+gu$. Instead, if one restricts to
transition fronts, the result is far to be optimal: if~$g$ is a KPP-type nonlinearity such that
$g(0)=g(1)=0$ and $0<g(s)\le g'(0)s$ for all~$s\in(0,1)$, standard traveling fronts $\phi(x-ct)$
with speed $c$ decay exponentially as $x\to+\infty$ with exponent~$(c-\sqrt{c^2-4g'(0)})/2$ in
the sense that $\lim_{x\to+\infty}(\ln\phi(x))/x=-(c-\sqrt{c^2-4g'(0)})/2$. This exponent
coincides with our bound only in the critical case $c=2\sqrt{g'(0)}$. The smallest
bound~$\sqrt{g'(0)}$ in \thm{exp} is provided by the slowest traveling front, i.e.~the critical
one, for which the bound is sharp. Then, as shown in Corollary~\ref{cor:exp} below, the
property of the critical front allows one to derive the same bound for all
other transition fronts, but it is not sharp in those cases.}
\end{remark}

\begin{corollary}\label{cor:exp}
Under the assumptions~\eqref{eq:f/u} and~\eqref{fpm}, any solution $0<u<1$ to~\eq{P=f}
satisfies~\eqref{eq:decay>}.
\end{corollary}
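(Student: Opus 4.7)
The plan is to first prove~\eqref{eq:decay>} for the critical transition front $w$ constructed in Section~\ref{sec:B} by a direct application of Theorem~\ref{thm:exp}, and then to deduce it for an arbitrary entire solution $0<u<1$ using the steepness (criticality) property of $w$.

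By time translation we reduce to $t_0=0$. Fix $\lambda>\sqrt{\mu_-}$ and choose $\epsilon\in(0,1)$ so small that $\sqrt{\mu_-}(1+\sqrt{\epsilon})<\lambda$. By~\eqref{fpm}, select $T_\epsilon<0$ with $f(t,s)\geq(1-\epsilon)f_-(s)$ for all $t\leq T_\epsilon$ and $s\in[0,1]$; then $w$ is a supersolution on $(-\infty,T_\epsilon]\times\R$ of the autonomous equation $v_t=v_{xx}+g(v)$ with $g:=(1-\epsilon)f_-$ and $g'(0)=(1-\epsilon)\mu_-$. The centering function $X_w$ furnished by Definition~\ref{def1} satisfies $\lim_{t\to-\infty}X_w(t)/t=2\sqrt{\mu_-}$ (the critical past speed established in Section~\ref{sec:B}), and combining~\eqref{gtf} with~\eqref{infsup} yields $\inf_{t\in\R,\,x\leq 0}w(t,X_w(t)+x)>0$, verifying the hypotheses of Theorem~\ref{thm:exp} applied (after a trivial time shift) to $w$ with $c=2\sqrt{\mu_-}$. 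The theorem then gives $\inf_{x\geq 0}e^{\lambda'x}w(T_\epsilon,x)>0$ for every $\lambda'$ exceeding the threshold $(c+\sqrt{c^2-4(1-\epsilon)\mu_-})/2=\sqrt{\mu_-}(1+\sqrt{\epsilon})$, a range that includes our $\lambda$.

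The bound at time $T_\epsilon$ is transported to time $0$ on the bounded interval $[T_\epsilon,0]$ by a linear parabolic comparison. Since $f\geq 0$, the explicit exponential $\psi(t,x):=Ce^{\lambda^2(t-T_\epsilon)-\lambda x}$ is a subsolution of~\eqref{eq:P=f}, and for $C$ small enough it lies below $w$ at time $T_\epsilon$ on every right half-line by the exponential bound just obtained; the comparison principle then carries the inequality $\psi\leq w$ up to $t=0$, yielding the desired exponential lower bound for $w(0,\cdot)$ on a right half-line, hence (using $w>0$ on the complementary bounded interval) on all of $[0,+\infty)$.

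To extend the conclusion to an arbitrary entire solution $0<u<1$, we invoke the criticality of $w$ recorded in Section~\ref{sec:B}: for any such $u$, a suitable right-translate $w(\cdot,\cdot-\tau)$ can be placed below $u$, whence the exponential lower bound just proved for $w$ transfers to $u$ at the price of the finite factor $e^{-\lambda\tau}$, and positivity plus continuity of $u$ on bounded sets handle the remaining short-range behaviour. The main obstacle I foresee is this last reduction: turning the one-sided pointwise comparison coming from the steepness of $w$ into a usable global lower bound for $u$ requires verifying that such a shift $\tau$ can indeed be chosen, which in turn rests on the fact that $w\to 1$ uniformly in time as one moves to the left of $X_w(\cdot)$ while $u<1$, so that some right-translate of $w$ eventually falls below $u$ on the whole line.
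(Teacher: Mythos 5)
Your treatment of the critical front $w$ is essentially the paper's argument: reduce to the autonomous equation with nonlinearity $(1-\e)f_-$ on $(-\infty,T_\e]$, apply Theorem~\ref{thm:exp} with $c=2\sqrt{\mu_-}$ to get the threshold $(1+\sqrt{\e})\sqrt{\mu_-}$, and transport the bound forward in time with an explicit exponential subsolution (which works because $f\ge0$ and the subsolution can be kept below $1$ and below $w$ on the parabolic boundary of $(T_\e,t_0)\times(0,+\infty)$). That part is sound.

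The gap is exactly where you suspected it: the passage to a general entire solution $0<u<1$. You propose to choose a right-translate $w(\cdot,\cdot-\tau)$ lying \emph{globally} below $u$, but such a $\tau$ need not exist. First, $u$ is an arbitrary entire solution, not a transition front, so $u(t,\cdot)$ need not tend to $1$ as $x\to-\infty$; since $w(t,x-\tau)\to1$ there, no translate can sit below $u$ on the whole line. Second, even when $u$ is a transition front, the critical front is the \emph{steepest} solution, hence (after normalizing at a crossing point) it lies \emph{above} $u$ to the left of the crossing; equivalently, $1-w$ decays faster as $x\to-\infty$ than $1-u$ does, so pushing $w$ to the right never makes it globally smaller. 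The correct — and simpler — use of criticality is one-sided and pointwise in time: fix $t$, translate $w$ in space so that $w(t,0)=u(t,0)$ (possible since $w(t,\cdot)$ is continuous, decreasing and onto $(0,1)$); the steepness property then gives $u(t,x)\ge w(t,x)$ for all $x\ge0$, with no loss factor and no global ordering required. Since~\eqref{eq:decay>} only concerns $x>0$, this one-sided inequality, combined with the bound already proved for $w$, finishes the proof. This is what the paper does, and it closes the step you flagged as the main obstacle.
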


\begin{proof}
Suppose first that $u$ is the ``critical'' transition front of~\cite{n2},
introduced in Section \ref{sec223} for constructing a front with minimal past
and future speeds. In particular, letting
$X$ be the function for which~$u$ satisfies~\eqref{gtf}, the first condition in~\eqref{eq:gamma}
holds by~\eqref{gtf} and~\eqref{infsup}, and the second one holds with~$c=2\sqrt{\mu_-}$.
Moreover, by~\eqref{fpm}, for any $0<\e<1$, there exists $T_\e\in\R$ such that $u$ is a
supersolution of $u_t=u_{xx}+(1-\e)f_-(u)$ for $t\leq T_\e$ and $x\in\R$. Thus, for
$\tau\leq T_\e$, we can apply \thm{exp} to the function~$u(\.+\tau,\.)$ and, since
$c=2\sqrt{\mu_-}\ge2\sqrt{(1-\e)f'_-(0)}$, we infer that $\inf_{x\geq0}e^{\lambda x}u(\tau,x)>0$
for any $\lambda$ such that
$$\lambda>\frac{2\sqrt{\mu_-}+\sqrt{4\mu_--4(1-\e)\mu_-}}2=(1+\sqrt\e)\sqrt{\mu_- }.$$
It is easy to see that the property $\inf_{x\geq0}e^{\lambda x}u(\tau,x)>0$ for such a
$\lambda$ is preserved for $\tau>T_\e$, by comparing $u$ with the function
$k\,e^{-\lambda x}$ for $k>0$ small enough, on the domain $t\in(T_\e,\tau)$, $x>0$. Indeed,
such a function is a subsolution of~\eq{P=f} and $k>0$ can be chosen in such a way that this
function lies below $u$ on the parabolic boundary
$(\{T_\e\}\times[0,+\infty))\cup([T_\e,\tau]\times\{0\})$. Due to the arbitrariness of $\e\in(0,1)$,
this concludes the proof of the corollary in the case where $u$ is the critical transition front.

Let now $0<v<1$ be a solution to~\eq{P=f}. Fix $t\in\R$. Up to translating the critical transition
front $u$ in space, it is not restrictive to assume that $u(t,0)=v(t,0)$. Hence, the criticality
property recalled in the previous subsection yields $v(t,x)\geq u(t,x)$ for $x\geq0$,
whence~$v$ satisfies~\eqref{eq:decay>} because $u$ does.
\end{proof}


\subsection{Proof of~\eqref{cpm2bis}}\label{sec24}

In Theorem~\ref{th1}, we proved the existence of transition fronts connecting $0$ and $1$ for~\eq{P=f} and having asymptotic past and future speeds $c_{\pm}$ given by~\eqref{cpm1}. We prove here the stronger property~\eqref{cpm2bis}, except possibly when $\mu_+>\mu_-$ and the speeds $c_{\pm}$ satisfy $c_-=2\sqrt{\mu_-}$ and~$c_+=\sqrt{\mu_-}+\mu_+/\sqrt{\mu_-}$. Actually, in the case where $c_{\pm}>2\sqrt{\mu_\pm}$, the limits~\eqref{cpm2bis} follow immediately from the definitions~\eqref{defX1},~\eqref{defX2} and~\eqref{defX12} used in the proof of Proposition~\ref{pro:exsupercrit}. In the general case, which is treated here, the property will follow from the convergence~\eq{phi+-}.

Let us only prove the second limit in~\eqref{cpm2bis}, since the first one can
be shown similarly. Let~$u$ be a transition front constructed in
Theorem~\ref{th1} (in all cases except when $\mu_+>\mu_-$ and the speeds~$c_{\pm}$
satisfy $c_-=2\sqrt{\mu_-}$ and $c_+=\sqrt{\mu_-}+\mu_+/\sqrt{\mu_-}$)
and let $\xi:\R\to\R$ be a bounded function such
that~$u(t,X(t)+\xi(t)+\cdot)\to\phi_{c_+}$ in $C^2(\R)$ as $t\to+\infty$. As
done in Step~3 of the proof of Proposition~\ref{pro:exsupercrit}, for any
sequence $(t_n)_{n\in\N}$ in $\R$ diverging to~$+\infty$, the
functions~$(\tau,x)\mapsto u(t_n+\tau,X(t_n)+\xi(t_n)+x)$ converge, up to
extraction of a subsequence, locally uniformly in $\R\times\R$ to an entire
solution $0\le u_{\infty}(\tau,x)\le 1$ of~\eq{P=f} with nonlinearity $f_+$,
such that $u_{\infty}(0,x)=\phi_{c_+}(x)$.
By uniqueness of the solution of the Cauchy problem associated with this limiting equation,
one infers that~$u_{\infty}(\tau,x)
=\phi_{c_+}(x-c_+\tau)$ for all $\tau>0$ and $x\in\R$. Since the limit is uniquely determined, it follows that, for any $\tau>0$,
\be\label{convtau}
u(t+\tau,X(t)+\xi(t)+x)\to\phi_{c_+}(x-c_+\tau)\ \hbox{ as }t\to+\infty
\ee
locally uniformly in $x\in\R$ (and then uniformly in $\R$, by~\eqref{gtf},~\eqref{Xtau},~$\phi_{c_+}(-\infty)=1$ and~$\phi_{c_+}(+\infty)=0$). This limit, together with~\eq{phi+-} applied with $t+\tau$ and the fact that the function $\phi_{c_+}$ is decreasing, implies that, for any $\tau>0$,
$X(t+\tau)-X(t)+\xi(t+\tau)-\xi(t)\to c_+\tau$ as $t\to+\infty$,\footnote{Notice that this limit also holds for $\tau<0$, by setting $t'=t+\tau$ and writing $t=t'+|\tau|$.}
whence
\be\label{convtau2}
\limsup_{t\to+\infty}\big|X(t+\tau)-X(t)-c_+\tau\big|\le2\|\xi\|_{L^{\infty}(\R)}.
\ee

Assume now by contradiction that the second property in~\eqref{cpm2bis} does not hold. Since $X$ is locally bounded and since $X(s)/s\to c_+$ as $s\to+\infty$, this means that there exist $\epsilon>0$ and some sequences $(t_n)_{n\in\N}$ and $(\tau_n)_{n\in\N}$ of positive real numbers diverging to $+\infty$ and such that
\be\label{tntaun}
\left|\frac{X(t_n+\tau_n)-X(t_n)}{\tau_n}-c_+\right|\ge\epsilon\ \hbox{ for all }n\in\N.
\ee
Choose now $\tau>0$ such that $2\|\xi\|_{L^{\infty}(\R)}/\tau<\epsilon/2$. By~\eqref{convtau2}, let $T>0$ be such that
$$\big|X(t+\tau)-X(t)-c_+\tau\big|\le2\|\xi\|_{L^{\infty}(\R)}+\frac{\epsilon\tau}{2}\ \hbox{ for all }t\ge T,$$
and let $n_0\in\N$ such that $t_n\ge T$ for all $n\ge n_0$. For such $n$, write $\tau_n=k_n\tau+\tau'_n$ with $k_n\in\N$ and $0\le\tau'_n<\tau$. It follows that, for all $n\ge n_0$,
$$\baa{l}
\big|X(t_n+\tau_n)-X(t_n)-c_+\tau_n\big|\vspace{3pt}\\
\qquad\quad\displaystyle\le\big|X(t_n+\tau_n)-X(t_n+k_n\tau)-c_+\tau'_n\big|+\sum_{k=0}^{k_n-1}\big|X(t_n+(k+1)\tau)-X(t_n+k\tau)-c_+\tau\big|\vspace{3pt}\\
\qquad\quad\displaystyle\le\big|X(t_n+k_n\tau+\tau'_n)-X(t_n+k_n\tau)-c_+\tau'_n\big|+\Big(2\|\xi\|_{L^{\infty}(\R)}+\frac{\epsilon\tau}{2}\Big)k_n.\eaa$$
Since the sequence $(\tau'_n)_{n\ge n_0}$ is bounded and since $\tau_n\to+\infty$ and $k_n/\tau_n\to1/\tau$ as $n\to+\infty$, one infers from~\eqref{Xtau} that
$$\limsup_{n\to+\infty}\left|\frac{X(t_n+\tau_n)-X(t_n)-c_+\tau_n}{\tau_n}\right|\le\Big(2\|\xi\|_{L^{\infty}(\R)}+\frac{\epsilon\tau}{2}\Big)\times\frac{1}{\tau}=\frac{2\|\xi\|_{L^{\infty}(\R)}}{\tau}+\frac{\epsilon}{2}<\epsilon,$$
the last inequality being due to the choice of $\tau$. This contradicts~\eqref{tntaun} and the proof is thereby complete.~\hfill$\Box$


\SE{A priori bounds and asymptotic limits of transition fronts}\label{sec3}

This section is chiefly devoted to the proof of Theorem~\ref{th2} (done in Section~\ref{sec32}) on the optimality of the bounds~(\ref{cpm1}) for the asymptotic past and future speeds $c_{\pm}$ as $t\to\pm\infty$ of any transition front connecting $0$ and $1$ for~\eq{P=f}. We also show the existence of the asymptotic speeds and the convergence to some asymptotic profiles for any supercritical front. We first recall in Section~\ref{sec31} some useful results of~\cite{hn2,hr} on transition fronts in the case of homogeneous concave nonlinearities $f=f(u)$. Finally, Section~\ref{sec33} is devoted to the proof of Theorem~\ref{thm:decay}.


\subsection{Transition fronts in the time-independent case}\label{sec31}

In this section, we focus on a particular time-independent version of~\eq{P=f}. Namely, let~$g:[0,1]\to\R$ be any~$C^2$ concave function such that $g(0)=g(1)=0$ and $g(u)>0$ for all $u\in(0,1)$. Consider the equation~\eq{P=f} with $f(t,u)=g(u)$, that is
\be\label{eqg}
u_t=u_{xx}+g(u),\ \ t\in\R,\ x\in\R.
\ee\par
For~(\ref{eqg}), standard traveling fronts~$\varphi_c(x-ct)$ such that $\varphi_c(-\infty)=1>\varphi_c>\varphi_c(+\infty)=0$ exist if and only if $c\ge c^*:=2\sqrt{g'(0)}$, see~\cite{aw,kpp}. Furthermore, the functions $\varphi_c$ are decreasing, unique up to shifts and one can assume without loss of generality that they satisfy
\be\label{asymvarphi}\left\{\baa{ll}
\varphi_c(\xi)\sim e^{-\lambda_c\xi} & \hbox{for }c>c^*\vspace{3pt}\\
\varphi_{c^*}(\xi)\sim \xi\,e^{-\lambda_{c^*}\xi} & \hbox{for }c=c^*\eaa\right.\hbox{ as }\xi\to+\infty,
\ee
where
\be\label{lambdac}
\lambda_c=\frac{c-\sqrt{c^2-4g'(0)}}{2}\ \hbox{ for }c\ge c^*.
\ee
Notice in particular that $\lambda_{c^*}=\sqrt{g'(0)}=c^*/2=:\lambda^*$. With the normalization~(\ref{asymvarphi}), it is known that~$\varphi_c(\xi)\le e^{-\lambda_c\xi}$ for all $c>c^*$ and for all $\xi\in\R$. Lastly, let $\theta:\R\to(0,1)$ be the unique solution of~$\theta'(t)=g(\theta(t)),\ t\in\R$ such that $\theta(t)\sim e^{g'(0)t}$ as $t\to-\infty$.\par
The standard traveling fronts $\varphi_c(\pm x-ct)$ are entire solutions ranging in $(0,1)$ and they are the keystones in the construction of many other solutions. More precisely, following~\cite{hn2}, let $\Psi$ be the bijection defined by
$$\baa{rcl}
\Psi:[-\lambda^*,\lambda^*]=[-\sqrt{g'(0)},\sqrt{g'(0)}] & \to & \mathcal{X}:=\big(\R\backslash(-c^*,c^*)\big)\cup\{\infty\}\vspace{3pt}\\
\lambda\neq 0 & \mapsto & \displaystyle\lambda+\frac{g'(0)}{\lambda}\vspace{3pt}\\
\lambda=0 & \mapsto & \infty\eaa$$
and let us endow $\mathcal{X}$ with the topology induced by the image by $\Psi$ of the Borel topology of~$[-\lambda^*,\lambda^*]$. In other words, a subset $O$ of $\mathcal{X}$ is open if $\Psi^{-1}(O)$ is open relatively in $[-\lambda^*,\lambda^*]$. Now, let $\mathcal{M}$ be the set of all nonnegative Borel measures~$\mu$ on~$\mathcal{X}$ such that~$0<\mu(\mathcal{X})<+\infty$. It follows then from Theorem~1.2 of~\cite{hn2} and formula~(30) of~\cite{hn2} that there is a one-to-one map
$$\mu\mapsto u_{\mu}$$
from $\mathcal{M}$ to the set of solutions $0<u<1$ of~(\ref{eqg}). Furthermore, for each~$\mu\in\mathcal{M}$, calling $M=\mu\big(\mathcal{X}\backslash\{-c^*,c^*\}\big)$, the solution $u_{\mu}$ satisfies
\be\label{umu}\baa{l}
\max\Big(\varphi_{c^*}\big(x-c^*t-c^*\ln\mu(c^*)\big),\,\varphi_{c^*}\big(\!-x-c^*t-c^*\ln\mu(-c^*)\big),\vspace{3pt}\\
\qquad\displaystyle\ \ M^{-1}\!\!\int_{\R\backslash[-c^*,c^*]}\!\!\varphi_{|c|}\big(({\rm{sgn}}\,c)x\!-\!|c|t\!-\!|c|\ln M\big)\,d\mu(c)\,+\,M^{-1}\theta(t\!+\!\ln M)\,\mu(\infty)\Big)\vspace{3pt}\\
\le\,u_{\mu}(t,x)\,\le\,\varphi_{c^*}\big(x-c^*t-c^*\ln\mu(c^*)\big)+\varphi_{c^*}\big(\!-x-c^*t-c^*\ln\mu(-c^*)\big)\vspace{3pt}\\
\qquad\qquad\qquad\displaystyle+M^{-1}\!\!\int_{\R\backslash[-c^*,c^*]}\!\!e^{-\lambda_{|c|}(({\rm{sgn}}\,c)x-|c|t-|c|\ln M)}d\mu(c)\,+\,M^{-1}e^{g'(0)(t+\ln M)}\mu(\infty)\eaa
\ee
for all $(t,x)\in\R^2$, under the convention that the terms involving $M$ are
not present if~$M=0$, that~$\mu(\pm c^*)=\mu\big(\{\pm c^*\}\big)$ and
$\mu(\infty)=\mu\big(\{\infty\}\big)$, and that $\ln 0=-\infty$. The
estimate~(\ref{umu}) reflects diffe\-rent types of contributions weighted by
$\mu$: critical standard fronts, supercritical standard fronts and the spatially
homogeneous solution. The solutions~$u_{\mu}$ are decreasing (resp. increasing)
with respect to $x$ if~$\mu\big((-\infty,-c^*]\cup\{\infty\}\big)=0$, that is,
if $u_{\mu}$ is a measurable interaction of right-moving spatially decreasing
traveling fronts $\varphi_c(x-ct)$ (resp.
if~$\mu\big([c^*,+\infty)\cup\{\infty\}\big)=0$, that is, if~$u_{\mu}$ is a
measurable interaction of left-moving spatially increasing traveling
fronts~$\varphi_{|c|}(-x-|c|t)$). Lastly, we point out that these solutions~$u_{\mu}$
almost describe the set of all solutions of~(\ref{eqg}). Indeed, on
the one hand, it follows from~\cite{aw} that any solution~$0<u<1$ of~(\ref{eqg})
is such that~$\max_{[-
c|t|,c|t|]}u(t,\cdot)\to0$ as $t\to-\infty$ for every $c\in[0,c^*)$, while, on
the other hand, the following almost-uniqueness result was proved in~\cite{hn2}:
if a solution~$0<u(t,x)<1$ of~(\ref{eqg}) is such that
\be\label{hyphn2}
\exists\,c>c^*,\ \ \max_{[-c|t|,c|t|]}u(t,\cdot)\to0\ \hbox{ as }t\to-\infty,
\ee
then there is a measure~$\mu\in\mathcal{M}$ such that $u=u_{\mu}$ and the support of $\mu$ does not intersect the interval $(-c,c)$. For the one-dimensional equation~(\ref{eqg}), it is conjectured that any solution~$0<u(t,x)<1$ is of the type~$u_{\mu}$, even without~(\ref{hyphn2}).\par
In~\cite{hr} (Theorems~1.11 and 1.14), we showed a necessary and sufficient condition for a solution of the type $u_{\mu}$ to be a transition front connecting~$0$ and~$1$ for problem~(\ref{eqg}), and we characterized the asymptotic past and future speeds in this case:

\begin{theorem}\label{thsupport}{\rm{\cite{hr}}}
Let $g:[0,1]\to\R$ be any $C^2$ concave function such that $g(0)=g(1)=0$ and~$g>0$ on the interval $(0,1)$.\par
{\rm{(i)}} Under the above notations, a solution $u_{\mu}$ of~$(\ref{eqg})$
associated with a measure~$\mu\in\mathcal{M}$ is a transition front connecting
$0$ and $1$ if and only if the support of~$\mu$ is bounded and is included
in~$[c^*,+\infty)=[2\sqrt{g'(0)},+\infty)$.\par
{\rm{(ii)}} Assume here that the support of $\mu$ is compactly included in $[c^*,+\infty)$ and let $c_-$ and~$c_+$ denote the leftmost and rightmost points of the support of $\mu$. Then $u_{\mu}$ has asymptotic past and future speeds equal to~$c_{\pm}$ in the sense of~\eqref{gtf} and~\eqref{cpm2}. Furthermore, if $c_->c^*$, then there is a bounded function~$\xi:\R\to\R$ such that $u_{\mu}(t,X(t)+\xi(t)+\cdot)\to\varphi_{c_{\pm}}$ in $C^2(\R)$ as $t\to\pm\infty$.
\end{theorem}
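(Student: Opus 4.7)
The argument is driven by the two-sided bound~\eqref{umu}, which sandwiches $u_\mu$ between an explicit superposition of standard traveling fronts $\varphi_c(x-ct-c\ln M)$ (together with an $x$-independent contribution coming from the spatially homogeneous solution $\theta$ when $\mu$ charges $\infty$) and the corresponding exponential tails. The plan is to read off the geometry of the level sets of $u_\mu$ from these bounds for~(i), and then to analyse the same bounds in the moving frame of speed $c_\pm$ to identify speeds and profiles for~(ii).

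\textbf{Part (i).} For the necessary direction, I would argue by contradiction and rule out three obstructions to $\supp(\mu)$ being a bounded subset of $[c^*,+\infty)$. If $\mu(\{\infty\})>0$, the $x$-independent term $M^{-1}\theta(t+\ln M)\mu(\{\infty\})$ appearing in the lower bound of~\eqref{umu} is bounded from below by a positive constant for all large $t$ (since $\theta(+\infty)=1$), which prevents $u_\mu(t,\cdot)$ from going to $0$ at $+\infty$. If $\mu$ charges $(-\infty,-c^*]$, the associated left-moving front contributions in the lower bound keep $u_\mu$ close to $1$ along half-lines escaping to $+\infty$. If $\supp(\mu)$ is unbounded in $[c^*,+\infty)$, then for any $c\in\supp(\mu)$ the corresponding right-moving front in the lower bound keeps $u_\mu$ significantly positive around $x\sim ct$, so the width of the level sets of $u_\mu$ grows without bound. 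All three contradict~\eqref{gtf}. For sufficiency, I assume $\supp(\mu)\subset[c_-,c_+]\subset[c^*,+\infty)$ and set
\[
X(t):=\begin{cases} c_-(t+\ln M) & \text{if }t+\ln M\leq 0,\\ c_+(t+\ln M) & \text{if }t+\ln M\geq 0,\end{cases}
\]
possibly smoothed by a bounded function. Since $c\mapsto c(t+\ln M)$ is monotone on $[c_-,c_+]$, the upper bound of~\eqref{umu} at $y=X(t)+x$, $x\geq 0$, is dominated by $Ce^{-\lambda_{c_+}x}$ when $t+\ln M\geq 0$ and by $Ce^{-\lambda_{c_-}x}$ otherwise, whence $u_\mu(t,X(t)+x)\to 0$ uniformly in $t$ as $x\to+\infty$. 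On the other side, the single-front term in the lower bound at the extremal speed $c_\pm$ yields $u_\mu(t,X(t)+x)\geq\varphi_{c_\pm}(x+O(1))\to 1$ uniformly in $t$ as $x\to-\infty$, so~\eqref{gtf} holds.

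\textbf{Part (ii).} The choice of $X$ above already yields $X(t)/t\to c_\pm$ as $t\to\pm\infty$, hence the asymptotic speeds. For the profile convergence in the case $c_->c^*$, I would pick a bounded function $\xi$ with $u_\mu(t,X(t)+\xi(t))=1/2$, take an arbitrary sequence $t_n\to+\infty$, and extract a locally uniform limit $\tilde u$ of the translates $u_\mu(t_n+\cdot,X(t_n)+\xi(t_n)+\cdot)$. Using~\eqref{umu} together with the tail asymptotics~\eqref{asymvarphi} and the fact that $\lambda_c>\lambda_{c_+}$ for $c<c_+$, the contributions of the non-extremal speeds become negligible in the frame moving at speed $c_+$, so $\tilde u(0,\cdot)$ has the same exponential decay rate $\lambda_{c_+}$ at $+\infty$ as $\varphi_{c_+}$ and the same limits at $\pm\infty$. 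A Liouville-type argument in the spirit of Theorem~3.5 of~\cite{bh1} then forces $\tilde u(t,x)=\varphi_{c_+}(x-c_+t)$, and uniqueness of the limit upgrades subsequential convergence to full convergence as $t\to+\infty$; monotonicity in $x$ and parabolic regularity promote it to $C^2(\R)$. The analogue as $t\to-\infty$ is symmetric when $c_->c^*$.

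\textbf{Main obstacle.} The delicate part is disentangling the contributions of the different speeds in~\eqref{umu} inside a single moving frame. For $c_->c^*$ the rates $\lambda_c$ are mutually distinct and separate the contributions cleanly in the frame of speed $c_\pm$, but when $c_-=c^*$ the asymptotics~\eqref{asymvarphi} of $\varphi_{c^*}$ contains an extra linear prefactor, so the critical and supercritical tails sharing the rate $\lambda^*$ can no longer be separated by a pure exponential ansatz; this is exactly why the profile statement in~(ii) is restricted to $c_->c^*$.
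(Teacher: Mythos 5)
First, a caveat: the paper does not prove Theorem~\ref{thsupport} at all — it is quoted from~\cite{hr} (Theorems~1.11 and~1.14 there) — so your proposal can only be checked against the statement, the bound~\eqref{umu}, and the auxiliary facts the paper imports. Your necessity argument in~(i), the speed identification, and the compactness-plus-Liouville scheme in~(ii) are sound in outline. The genuine gap is in the sufficiency direction of~(i). The relevant part of the lower bound in~\eqref{umu} is the \emph{average} $M^{-1}\int\varphi_c(x-ct-c\ln M)\,d\mu(c)$, not a maximum over single fronts; when $\mu(\{c_\pm\})=0$ there is no "single-front term at the extremal speed" and no inequality of the form $u_\mu(t,X(t)+x)\ge\varphi_{c_\pm}(x+O(1))$. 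Concretely, take $\mu$ to be Lebesgue measure on $[c_-,c_+]$ with $c^*<c_-<c_+$: at $y=c_+(t+\ln M)+x$ with $x\le0$ fixed, the integral lower bound is $M^{-1}\int_{c_-}^{c_+}\varphi_c\big((c_+-c)(t+\ln M)+x\big)\,dc=O\big((|x|+1)/t\big)\to0$ as $t\to+\infty$, because for every $c<c_+$ the argument runs off to $+\infty$. So~\eqref{umu} alone cannot give $u_\mu(t,X(t)+x)\to1$ as $x\to-\infty$ uniformly in $t$. Worse, your explicit choice $X(t)=c_\pm(t+\ln M)$ is not a valid front position in this case: as the paper records immediately after the statement, $X(t)-c_+t\to-\infty$ as $t\to+\infty$ whenever $\mu(\{c_+\})=0$, so $u_\mu(t,c_+(t+\ln M))\to0$ and~\eqref{gtf} fails for your $X$.

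The missing ingredient is the steepness comparison the paper quotes as Proposition~\ref{pro3}: $u_\mu$ is steeper than $\varphi_\gamma$ for any $\gamma$ above the support of $\mu$. Defining $X(t)$ implicitly by $u_\mu(t,X(t))=1/2$, that proposition gives $u_\mu(t,X(t)+x)\ge\varphi_\gamma(\varphi_\gamma^{-1}(1/2)+x)$ for $x\le0$ and the reverse inequality for $x\ge0$, which yields both limits in~\eqref{gtf} uniformly in $t$ at one stroke. With this level-set $X$, your upper-bound computation still gives $\limsup_{t\to+\infty}X(t)/t\le c_+$, and $\liminf_{t\to+\infty}X(t)/t\ge c_+$ follows because the integral lower bound \emph{does} keep $u_\mu(t,(c_+-\e)t)$ bounded away from $0$ (though not close to $1$), which together with~\eqref{infsup} pins $X(t)$ from below; symmetrically as $t\to-\infty$. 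Finally, in~(ii) you should make explicit the two-sided bound $A^{-1}e^{-\lambda_{c_+}x}\le\tilde u(0,x)\le Ae^{-\lambda_{c_+}x}$ needed to trap $\tilde u$ between two shifts of $\varphi_{c_+}$ before invoking the Liouville theorem: the upper bound comes from~\eqref{umu}, but the lower one requires isolating the mass of $\mu$ near $c_+$ (this is the content of Lemma~\ref{lemmev}), not merely the assertion that non-extremal speeds are negligible.
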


It has also been proved in~\cite{hr} that, under the assumptions of part~(ii)
above, there holds $\limsup_{t\to-\infty}|X(t)-c_-t|<+\infty$
if~$\mu(c_-):=\mu\big(\{c_-\}\big)>0$ and $X(t)-c_-t\to-\infty$ as~$t\to-\infty$
if $\mu(c_-)=0$, while $\limsup_{t\to+\infty}|X(t)-c_+t|<+\infty$
if~$\mu(c_+):=\mu\big(\{c_+\}\big)>0)$ and $X(t)-c_+t\to-\infty$
as~$t\to+\infty$ if~$\mu(c_+)=0$. On the other hand, Theorem~\ref{thsupport}
also implies that not all
solutions $0<u(t,x)<1$ of~\eqref{eqg} such that $u(t,-\infty)=1$ and
$u(t,+\infty)=0$ for all $t\in\R$ are transition fronts: namely, any solution of
the type $u=u_{\mu}$ for which the support of $\mu$ is included in
$[c^*,+\infty)$ but is not compact satisfies the above limits (roughly speaking,
for such solutions, the transition region between $0$ and $1$ is not uniformly
bounded in time). As another corollary of Theorem~\ref{thsupport} (see
Theorem~1.6~in~\cite{hr}), it follows that transition fronts connecting
$0$ and $1$ for~\eqref{eqg} and having asymptotic past and future speeds
$c_{\pm}$ exist if and only if $c^*\le c_-\le c_+<+\infty$. For further results
and comments on the transition fronts for homogeneous equation~\eqref{eqg}, we
refer to~\cite{hr}.\par
To complete this subsection, we include an additional comparison result
(see Proposition~2.1 in~\cite{hr}) which has its own interest and will be used
in particular in the proof of Theorem~\ref{th2} in Section~\ref{sec32}. It
provides uniform lower and upper bounds of a solution $u_{\mu}$ of~(\ref{eqg})
on the left and right of its level sets, when the measure~$\mu\in\mathcal{M}$ is
compactly supported in $[c^*,+\infty)$. These bounds say that the~$u_{\mu}$ is
steeper than the standard front associated to any speed larger than its support.

\begin{proposition}\label{pro3}{\rm{\cite{hr}}}
Let $g:[0,1]\to\R$ be any $C^2$ concave function such that $g(0)=g(1)=0$
and~$g>0$ on the interval $(0,1)$. Let $\mu$ be any measure in~$\mathcal{M}$
that is supported in~$[c^*,\gamma]=[2\sqrt{g'(0)},\gamma]$ for
some~$\gamma\in[c^*,+\infty)$, and let $0<u_{\mu}<1$ be the solution of~\eqref{eqg}
that is associated to the measure $\mu$. Then, for every $(t,y)\in\R^2$,
\be\label{umusteeper3}\left\{\baa{ll}
u_{\mu}(t,y+x)\ge\varphi_{\gamma}\big(\varphi_{\gamma}^{-1}(u_{\mu}(t,y))+x\big)
& \hbox{for all }x\le 0,\vspace{3pt}\\
u_{\mu}(t,y+x)\le\varphi_{\gamma}\big(\varphi_{\gamma}^{-1}(u_{\mu}(t,
y))+x\big)
& \hbox{for all }x\ge 0,\eaa\right.
\ee
where $\varphi_{\gamma}^{-1}:(0,1)\to\R$ denotes the reciprocal of the function $\varphi_{\gamma}$.
\end{proposition}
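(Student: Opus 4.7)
My approach relies on Angenent's zero-number (intersection) theorem. Fix $(t_0, y_0) \in \R^2$ and set $\eta_0 := \varphi_\gamma^{-1}(u_\mu(t_0, y_0))$. Introduce the shifted traveling front
\[
v(t,x) := \varphi_\gamma\bigl(x - y_0 + \eta_0 - \gamma(t - t_0)\bigr),
\]
an entire classical solution of~\eqref{eqg} satisfying $v(t_0, y_0) = u_\mu(t_0, y_0)$. In terms of the difference $w(t,x) := u_\mu(t,x) - v(t,x)$, the inequalities~\eqref{umusteeper3} taken at $y = y_0$ amount to the assertion that $w(t_0, \cdot)$ is nonnegative on $(-\infty, y_0]$ and nonpositive on $[y_0, +\infty)$.

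Since both $u_\mu$ and $v$ solve~\eqref{eqg}, the function $w$ satisfies a linear parabolic equation $w_t = w_{xx} + a(t,x)\,w$ with bounded coefficient $a = (g(u_\mu) - g(v))/(u_\mu - v)$. Angenent's theorem~\cite{a} then ensures that the number $Z(t)$ of sign changes of $w(t,\cdot)$ is non-increasing in $t$ and finite for $t$ past any fixed value. The proposition thereby reduces to showing (i) $Z(t_0) \le 1$, and (ii) that the one sign change (if any) goes from $+$ on $(-\infty, y_0)$ to $-$ on $(y_0, +\infty)$.

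For (i), I plan to exploit the sharp two-sided bounds~\eqref{umu} and analyze $w(t, \cdot)$ as $t \to -\infty$. Under the assumption $\supp(\mu) \subset [c^*, \gamma]$, the left-moving and infinite-speed contributions in~\eqref{umu} vanish, leaving the critical term $\varphi_{c^*}(x - c^* t - c^* \ln\mu(\{c^*\}))$ together with an integral $M^{-1} \int_{(c^*, \gamma]} e^{-\lambda_c(x - ct - c \ln M)}\,d\mu(c)$ in the upper bound. Both $u_\mu(t, \cdot)$ and $v(t, \cdot)$ thus decay at the same leading exponential rate $\lambda_\gamma$ as $x \to +\infty$, while the subleading modes of $u_\mu$ (corresponding to $c < \gamma$, hence with $\lambda_c > \lambda_\gamma$) decay strictly faster. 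A careful expansion in the joint limit $t \to -\infty$, tracking $x$ in the moving frame $x - \gamma t$, should show that $w(t, \cdot)$ has at most one sign change for $t$ sufficiently negative. Property (ii) then follows by comparing leading coefficients: $u_\mu$ inherits from its slower-speed modes a stronger presence on the left, whereas $v$ dominates $u_\mu$ on the right because its single decay exponent $\lambda_\gamma$ is the slowest among those present.

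The main obstacle is step (i), namely the delicate asymptotic analysis at $t = -\infty$, especially at the critical endpoint $c^*$ where the front $\varphi_{c^*}$ carries the polynomial prefactor $\xi\,e^{-\lambda^*\xi}$. An appealing alternative would be to first establish the proposition when $\mu$ is a finite sum of Dirac masses---in which case $u_\mu$ is a tractable combination of standard traveling fronts with speeds $\le \gamma$, each of which satisfies the desired steepness against $\varphi_\gamma$ by the classical single-front comparison of~\cite{aw,kpp}---and then pass to the general case by weak-$\star$ approximation of $\mu$ together with continuous dependence of $u_\mu$ on $\mu$, the latter being ensured by the uniform bounds~\eqref{umu} and standard parabolic compactness.
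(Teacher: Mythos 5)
First, a point of reference: the paper does not prove Proposition~\ref{pro3} at all --- it is quoted verbatim from Proposition~2.1 of \cite{hr} --- so your argument has to stand on its own rather than be measured against an in-paper proof. Your general frame (reduce \eqref{umusteeper3} to counting sign changes of $w=u_\mu-v$ and invoke the non-increase of the zero number) is indeed the mechanism underlying such steepness statements, and it is the same tool the authors use for \eqref{eq:intersection}.

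However, there is a genuine gap, and it sits exactly where you place ``the main obstacle'': step (i) is a plan, not an argument, and the route you propose for it does not go through as described. The two-sided bounds \eqref{umu} cannot determine the sign of $u_\mu(t,\cdot)-\varphi_\gamma(\cdot-\gamma t-\rho)$ in the right tail: the lower bound involves the fronts $\varphi_{|c|}$ while the upper bound involves the pure exponentials $e^{-\lambda_c(\cdot)}$, and whenever $\gamma$ (or speeds arbitrarily close to $\gamma$) lies in $\supp\mu$, a translate of $\varphi_\gamma$ can sit strictly between the two bounds for all large $x$; so no expansion based on \eqref{umu} alone can count sign changes there. Your heuristic for the orientation (``$v$ dominates $u_\mu$ on the right because $\lambda_\gamma$ is the slowest exponent present'') fails in that same case, since $u_\mu$ then also carries a $\lambda_\gamma$-mode and domination depends on prefactors, i.e.\ on the shift. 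The behaviour as $x\to-\infty$, near the stable state $1$, poses a symmetric problem that you do not address. Finally, even granting at most one sign change oriented $+$ to $-$, you still must locate it exactly at $y_0$; the standard fix is to run the comparison against every translate $\varphi_\gamma(\cdot-\gamma t-\rho)$ and slide $\rho$ to the critical value from both sides, a step your reduction omits (a single degenerate zero at $y_0$ with $w>0$ on both sides is not excluded by your (i)--(ii)).

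Your fallback route is actually the more promising one and is closer to how such results are established in \cite{hr,hn2}: for a finite Dirac combination the approximating Cauchy data at time $-n$ are maxima of translates of fronts $\varphi_{c_i}$ with $c_i\le\gamma$, each of which is steeper than every translate of $\varphi_\gamma$; a pointwise maximum of functions steeper than $\varphi_\gamma$ is again steeper (if $\max_i f_i>\psi$ at $x_1$ then some $f_i>\psi$ at $x_1$, hence $f_i\ge\psi$ on $(-\infty,x_1]$); and steepness is preserved by the evolution and by locally uniform limits. But as written this route also rests on two unverified ingredients --- the precise form of the initial data in the construction of \cite{hn2}, and the continuity of $\mu\mapsto u_\mu$ under weak-$\star$ convergence, which does not follow from \eqref{umu} alone. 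Neither of your two routes is complete as it stands.
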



\subsection{Proof of Theorem~\ref{th2}}\label{sec32}

In this section, $f:\R\times[0,1]\to\R$ is any function satisfying the assumptions of Theorem~\ref{th2} and $u$ denotes any transition front connecting $0$ and $1$ for problem~\eq{P=f}, satis\-fying~(\ref{gtf}) for some~$X:\R\to\R$. First of all, it follows from~\eqref{Xtau} and Proposition~\ref{pro:spreading} that
\be\label{boundedspeed}
2\sqrt{\mu_\pm}\le c_{\pm}:=\liminf_{t\to\pm\infty}\frac{X(t)}{t}\le\limsup_{t\to\pm\infty}\frac{X(t)}{t}<+\infty.
\ee
The general strategy to prove Theorem~\ref{th2} will be to compare $u$ on some time-intervals of the type~$(-\infty,\tau]$ with the solution $v$ of the homogeneous reaction-diffusion equation~\eqref{eqg} with~$g=f_-$. Then we derive a uniform exponential lower bound for $v$, and hence for $u$, by applying some results of Section~\ref{sec31}. Furthermore, the results on the qualitative properties of solutions of~\eqref{eqg} with $g=f_-$ will provide the exact exponential decay rate of $u(t,\cdot)$ at $+\infty$ if~$\liminf_{t\to-\infty}X(t)/t>2\sqrt{\mu_-}$. Lastly, by comparing $u$, for large positive times, with the solutions of some homogeneous reaction-diffusion equations with nonlinearities close to~$f_+$, we get the bound~(\ref{cpm2ter}) for~$c_+$, and some passages to the limits will yield~\eq{phi+-} in the supercritical case.

\subsubsection*{Step 1: construction of a solution $v$ of a time-independent equation}

For every $n\in\N$, let $v_n$ be the solution of the Cauchy problem
\be\label{defvn}\left\{\baa{rcll}
(v_n)_t & = & (v_n)_{xx}+f_-(v_n), & t>-n,\ x\in\R,\vspace{3pt}\\
v_n(-n,x) & = & u(-n,x), & x\in\R.\eaa\right.
\ee
Since $0<u<1$ in $\R^2$ and $f_-(0)=f_-(1)=0$, the maximum principle yields $0<v_n(t,x)<1$ for all~$n\in\N$ and $(t,x)\in[-n,+\infty)\times\R$. From standard parabolic estimates, it follows that, up to extraction of a subsequence, the functions $v_n$ converge locally uniformly in $\R^2$ to a solution $v$ of the homogeneous equation
\be\label{eqv}
v_t=v_{xx}+f_-(v),\ \ t\in\R,\ x\in\R
\ee
such that $0\le v(t,x)\le 1$ for all $(t,x)\in\R^2$.

\subsubsection*{Step 2: comparisons between $v$ and $u$}

Let us recall here the existence of a continuous $L^1(-\infty,0)$ function
$\zeta$ such that the assumption~(\ref{hypf-}) holds. Furthermore, since the
$C^1(\R\times[0,1])$ function $f$ vanishes on $\R\times\{0,1\}$ and since $f_-$
is positive in $(0,1)$ and concave in $[0,1]$ (one has in particular
$f'_-(0)>0>f'_-(1)$), it follows, as said in Remark~\ref{hypf+-},  that the
function
$\zeta_-:t\mapsto\sup_{s\in(0,1)}\big|f(t,s)/f_-(s)-1\big|$ is continuous.
Therefore, even if it means changing $\zeta$ into~$\zeta_-$ on the whole $\R$,
one can assume without loss of generality that there exists a nonnegative
continuous function~$\zeta:\R\to\R$ such that~(\ref{hypf-}) holds for all
$t\in\R$, that is
\be\label{hypf-bis}
(1-\zeta(t))\,f_-(s)\le f(t,s)\le(1+\zeta(t))\,f_-(s)\hbox{ for all }t\in\R\hbox{ and }s\in[0,1],
\ee
and $\zeta\in L^1(-\infty,\tau)$ for all $\tau\in\R$. Let us now define
\be\label{defTheta}
\Theta(t)=\int_{-\infty}^t\zeta(\tau)d\tau\ \hbox{ for }t\in\R.
\ee

\begin{lemma}\label{lemuv}
There holds
\be\label{comparuv}
u(t,x)\,e^{-\mu_-\Theta(t)}\le v(t,x)\le u(t,x)\,e^{\mu_-\Theta(t)}\ \hbox{ for all }(t,x)\in\R^2.
\ee
\end{lemma}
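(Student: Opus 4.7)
The strategy is to multiply $u$ by the exponential weights $e^{\pm\mu_-\Theta(t)}$ and show that the resulting functions are, respectively, a sub- and a super-solution of the autonomous KPP equation $w_t=w_{xx}+f_-(w)$ satisfied by $v_n$, and then pass to the limit as $n\to+\infty$. Throughout, the two workhorses will be the two-sided bound~\eqref{hypf-bis}, $(1-\zeta(t))f_-(s)\le f(t,s)\le(1+\zeta(t))f_-(s)$, and the immediate consequence of the KPP monotonicity of $s\mapsto f_-(s)/s$ that $0\le f_-(s)\le\mu_- s$ on $[0,1]$. It will be convenient to extend $f_-$ to $\R$ by $f_-(s)=0$ for $s\ge1$, so that $s\mapsto f_-(s)/s$ remains nonincreasing on $(0,+\infty)$ and $w\equiv1$ is a solution of $w_t=w_{xx}+f_-(w)$.

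For the left inequality in~\eqref{comparuv}, I would set $\tilde u:=u\,e^{-\mu_-\Theta(t)}\le u\le1$ and compute directly
$$
\tilde u_t-\tilde u_{xx}=e^{-\mu_-\Theta(t)}\big(f(t,u)-\mu_-\zeta(t)u\big)\le e^{-\mu_-\Theta(t)}\big((1+\zeta(t))f_-(u)-\mu_-\zeta(t)u\big)\le e^{-\mu_-\Theta(t)}f_-(u),
$$
using $f_-(u)\le\mu_- u$. Next, since $\tilde u\,e^{\mu_-\Theta(t)}=u\le1$, the KPP monotonicity applied between $\tilde u$ and $u$ yields $e^{-\mu_-\Theta(t)}f_-(u)\le f_-(\tilde u)$, whence $\tilde u_t-\tilde u_{xx}\le f_-(\tilde u)$. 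At $t=-n$, $\tilde u(-n,\cdot)=u(-n,\cdot)e^{-\mu_-\Theta(-n)}\le u(-n,\cdot)=v_n(-n,\cdot)$ since $\Theta\ge0$, so the parabolic comparison principle for bounded solutions of semilinear equations with locally Lipschitz nonlinearity gives $\tilde u\le v_n$ on $[-n,+\infty)\times\R$; passing to the limit along the subsequence defining $v$ yields $u\,e^{-\mu_-\Theta(t)}\le v$.

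For the right inequality, the analogous choice $\bar u:=u\,e^{\mu_-\Theta(t)}$ leads by the same manipulation to
$$
\bar u_t-\bar u_{xx}\ge e^{\mu_-\Theta(t)}\big((1-\zeta(t))f_-(u)+\mu_-\zeta(t)u\big)\ge e^{\mu_-\Theta(t)}f_-(u),
$$
and, wherever $\bar u\le1$, the KPP monotonicity applied between $u$ and $\bar u$ gives $e^{\mu_-\Theta(t)}f_-(u)\ge f_-(\bar u)$, so that $\bar u$ is a supersolution of $w_t=w_{xx}+f_-(w)$ on $\{\bar u\le1\}$. The main subtlety of the argument lies here: $\bar u$ may a priori exceed $1$, where $f_-$ is not naturally defined and the above inequality becomes meaningless. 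I would handle this through the classical device of working with $\min(\bar u,1)$, which is a generalized supersolution of the autonomous KPP equation as the minimum of two supersolutions (recall $w\equiv1$ solves it). Since $v_n(-n,\cdot)=u(-n,\cdot)\le\min(\bar u(-n,\cdot),1)$, the generalized comparison principle then gives $v_n\le\min(\bar u,1)\le\bar u$ on $[-n,+\infty)\times\R$, and letting $n\to+\infty$ provides $v\le u\,e^{\mu_-\Theta(t)}$. The only nontrivial step beyond bookkeeping is therefore the min-with-$1$ truncation needed to keep the upper comparison meaningful everywhere.
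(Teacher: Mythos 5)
Your proof is correct and takes essentially the same route as the paper's: weight by the exponential of $\mu_-\int\zeta$, use the two-sided bound~(\ref{hypf-bis}) together with $f_-(s)\le\mu_-s$ and the monotonicity of $s\mapsto f_-(s)/s$ to turn the weighted functions into sub/supersolutions, truncate at $1$ for the upper comparison, and pass to the limit in $n$. The only (immaterial) differences are that the paper normalizes the weight to equal $1$ at the initial time $t=-n$ (i.e.\ uses $\exp\big(\mu_-\int_{-n}^t\zeta\big)$ rather than $e^{\mu_-\Theta(t)}$), and that for the lower inequality it weights $v_n$ and compares in the non-autonomous equation satisfied by $u$, rather than weighting $u$ and comparing in the autonomous equation as you do.
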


\noindent{\bf{Proof.}} For every $n\in\N$, let $\varphi_n$ be the function defined in $[-n,+\infty)$ by $\varphi_n(t)=\exp\big(\mu_-\int_{-n}^t\zeta(\tau)d\tau\big)$.
For the proof of the upper bound in~(\ref{comparuv}), denote $w_n(t,x)\!=\!u(t,x)\varphi_n(t)$ for~$(t,x)\!\in\![-n,+\infty)\times\R$. Let us check that the function $w_n$ is a supersolution for the equation~(\ref{defvn}) satis\-fied by~$v_n$ in~$[-n,+\infty)\times\R$. First of all, we observe that $w_n(-n,x)=u(-n,x)=v_n(-n,x)$ for all~$x\in\R$, and that $v_n<1$ in $[-n,+\infty)\times\R$. For every $(t,x)\in(-n,+\infty)\times\R$ such that $w_n(t,x)<1$, there holds
$$\baa{rcl}
(w_n)_t(t,x)\!-\!(w_n)_{xx}(t,x)\!-\!f_-(w_n(t,x)) & \!\!\!=\!\!\! & u_t(t,x)\varphi_n(t)\!+\!u(t,x)\varphi_n'(t)\!-\!u_{xx}(t,x)\varphi_n(t)\!-\!f_-\big(u(t,x)\,\varphi_n(t)\big)\vspace{3pt}\\
& \!\!\!=\!\!\! & f\big(t,u(t,x)\big)\,\varphi_n(t)+\mu_-\,\zeta(t)\,u(t,x)\,\varphi_n(t)-f_-\big(u(t,x)\,\varphi_n(t)\big).\eaa$$
But, for such $(t,x)$, one has $f(t,u(t,x))\ge(1-\zeta(t))\,f_-(u(t,x))$ by~(\ref{hypf-bis}), while
$$f_-\big(u(t,x)\,\varphi_n(t)\big)\le f_-\big(u(t,x)\big)\,\varphi_n(t)\ \hbox{ and }\ f_-(u(t,x))\le f'_-(0)u(t,x)=\mu_-u(t,x)$$
since $0<u(t,x)\le u(t,x)\varphi_n(t)<1$ and $s\mapsto f_-(s)/s$ is nonincreasing on $(0,1)$. Therefore,
$$\baa{rcl}
(w_n)_t(t,x)\!-\!(w_n)_{xx}(t,x)\!-\!f_-(w_n(t,x)) & \!\!\!\ge\!\!\! & (1-\zeta(t))f_-(u(t,x))\varphi_n(t)\!+\!\mu_-\zeta(t)u(t,x)\varphi_n(t)\!-\!f_-(u(t,x))\varphi_n(t)\vspace{3pt}\\
& \!\!\!=\!\!\! & \big(\mu_-u(t,x)-f_-(u(t,x))\big)\,\zeta(t)\,\varphi_n(t)\vspace{3pt}\\
& \!\!\!\ge\!\!\! & 0\eaa$$
for every $(t,x)\in(-n,+\infty)\times\R$ such that $w_n(t,x)<1$. It follows then from the maximum principle that $w_n(t,x)\ge v_n(t,x)$ for all~$(t,x)\in[-n,+\infty)\times\R$, that is
$v_n(t,x)\le u(t,x)\,\exp\big(\mu_-\int_{-n}^t\zeta(\tau)d\tau\big)$ for all~$(t,x)\in[-n,+\infty)\times\R$.
By passing to the limit as $n\to+\infty$ and using the definition~(\ref{defTheta}), one infers that~$v(t,x)\le u(t,x)\,e^{\mu_-\Theta(t)}$ for all $(t,x)\in\R^2$, that is, the upper bound in~\eqref{comparuv} has been shown.\par
The proof of the lower bound in~(\ref{comparuv}) uses a similar method. Namely, we set $z_n(t,x)=v_n(t,x)\,\varphi_n(t)$ for $(t,x)\in[-n,+\infty)\times\R$ and we will check that~$z_n$ is a supersolution for the equation satisfied by $u$ in~$[-n,+\infty)\times\R$. First of all, we note that~$z_n(-n,x)=v_n(-n,x)=u(-n,x)$ for all $x\in\R$ and that $u<1$. Moreover, for every~$(t,x)\in(-n,+\infty)\times\R$ such that $z_n(t,x)<1$, there holds
$$\baa{l}
(z_n)_t(t,x)\!-\!(z_n)_{xx}(t,x)\!-\!f(t,z_n(t,x))\vspace{3pt}\\
\qquad\qquad=(v_n)_t(t,x)\,\varphi_n(t)\!+\!v_n(t,x)\,\varphi_n'(t)\!-\!(v_n)_{xx}(t,x)\,\varphi_n(t)-f\big(t,v_n(t,x)\,\varphi_n(t)\big)\vspace{3pt}\\
\qquad\qquad=f_-\big(v_n(t,x)\big)\,\varphi_n(t)+\mu_-\zeta(t)\,v_n(t,x)\,\varphi_n(t)-f(t,v_n(t,x)\,\varphi_n(t)).\eaa$$
But, as it was done for $w_n$ in the last paragraph,
$$\baa{rcl}
f\big(t,v_n(t,x)\,\varphi_n(t)\big) & \le & (1+\zeta(t))\,f_-\big(v_n(t,x)\,\varphi_n(t)\big)\vspace{3pt}\\
& \le & f_-\big(v_n(t,x)\,\varphi_n(t)\big)+\zeta(t)\,\mu_-\,v_n(t,x)\,\varphi_n(t)\vspace{3pt}\\
& \le & f_-\big(v_n(t,x)\big)\,\varphi_n(t)+\zeta(t)\,\mu_-\,v_n(t,x)\,\varphi_n(t),\eaa$$
whence $(z_n)_t(t,x)-(z_n)_{xx}(t,x)-f(t,z_n(t,x))\ge0$ for every $(t,x)\in(-n,+\infty)\times\R$ such that $z_n(t,x)<1$. It follows then from the maximum principle that $z_n(t,x)\ge u(t,x)$ for all~$(t,x)\in[-n,+\infty)\times\R$, that is,~$v_n(t,x)\ge u(t,x)\,\exp\big(\!\!-\mu_-\!\int_{-n}^t\zeta(\tau)d\tau\!\big)$ for all $(t,x)\in[-n,+\infty)\times\R$.
By passing to the limit as $n\to+\infty$ and using the definition~(\ref{defTheta}), one concludes that~$v(t,x)\ge u(t,x)\,e^{-\mu_-\Theta(t)}$ for all $(t,x)\in\R^2$ and the proof of Lemma~\ref{lemuv} is thereby complete.~\hfill$\Box$

\begin{remark}{\rm
One could wonder whether a comparison similar to that of Lemma~\ref{lemuv} would hold or not for the functions $\widetilde{u}:=1-u$ and $\widetilde{v}:=1-v$. Actually, such a comparison between~$\widetilde{u}$ and~$\widetilde{v}$ is not clear. Indeed, for instance, the function $\widetilde{v}$ obeys $\widetilde{v}_t=\widetilde{v}_{xx}+g_-(\widetilde{v})$ in~$\R^2$ where~$g_-(s)=-f_-(1-s)$ on $[0,1]$. But $0$ is a stable point of $g_-$ and the arguments used in the proof of Lemma~\ref{lemuv} to compare $u$ and $v$ do not work as such for $\widetilde{u}$ and $\widetilde{v}$. As a matter of fact, a comparison of the type~(\ref{comparuv}) for $1-u$ and $1-v$ is not needed, since only the exponential decay of $u(t,x)$ and $v(t,x)$ as $x\to+\infty$ will determine the asymptotic speeds of~$u$ and $v$, as will be shown in the following steps.}
\end{remark}

\subsubsection*{Step 3: $v$ is a transition front for~\eqref{eqv} in $(-\infty,\tau]\!\times\!\R$ for all $\tau\!\in\!\R$ with the same family~$(X(t))_{t\le\tau}$}

Since~$0<u<1$ in $\R^2$ and $u(t,+\infty)=0$ for every $t\in\R$, it follows immediately from Lemma~\ref{lemuv} that~$0<v(t_0,x_0)<1$ for at least a point $(t_0,x_0)\in\R^2$. Since~$0\le v\le 1$ in $\R^2$, the strong maximum principle actually implies that $0<v(t,x)<1$ for all $(t,x)\in\R^2$. The estimates~(\ref{comparuv}) and the stability of~$1$ will then imply that~$v$ is a transition front connecting~$0$ and~$1$ for~(\ref{eqv}) with the same family $(X(t))$ as $u$ in any set of the type~$(-\infty,\tau]\times\R$, in the sense of the following lemma.

\begin{lemma}\label{lemvfront}
For every $\tau\in\R$, there holds
$$\left\{\baa{ll}
v(t,X(t)+x)\to 1 & \hbox{as }x\to-\infty\vspace{3pt}\\
v(t,X(t)+x)\to 0 & \hbox{as }x\to+\infty\eaa\right.\hbox{ uniformly in }t\in(-\infty,\tau].$$
\end{lemma}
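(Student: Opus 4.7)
Both limits will be derived from the two-sided bound \eqref{comparuv} of Lemma~\ref{lemuv}. Since $\zeta\ge0$ lies in $L^1(-\infty,\tau)$, the function $\Theta$ in \eqref{defTheta} is nondecreasing and bounded on $(-\infty,\tau]$ by $\Theta(\tau)<+\infty$. Set $K_\tau:=e^{\mu_-\Theta(\tau)}\ge1$. The limit $v(t,X(t)+x)\to 0$ as $x\to+\infty$ uniformly in $t\le\tau$ follows at once from the upper inequality $v\le K_\tau\, u$ in $(-\infty,\tau]\times\R$ together with the analogous property of $u$ coming from Definition~\ref{def1}.

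The limit as $x\to-\infty$ is where the work lies, because the lower inequality $v\ge K_\tau^{-1}u$ only transfers the asymptotic value $1$ of $u$ into a positive constant $K_\tau^{-1}<1$. More precisely, for given $\delta\in(0,1)$, Definition~\ref{def1} provides $M_0>0$ with $u(t,X(t)+y)\ge1-\delta$ for every $(t,y)\in\R\times(-\infty,-M_0]$, so that
$$v(t,X(t)+y)\ge\alpha_\delta:=(1-\delta)K_\tau^{-1}>0\qquad\text{for all }t\le\tau,\ y\le-M_0.$$
To upgrade this positive constant $\alpha_\delta$ to a value arbitrarily close to $1$, I would invoke the KPP spreading result of Aronson--Weinberger for the homogeneous limit equation \eqref{eqv}. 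Let $V$ denote its Cauchy solution emerging from the nonincreasing datum $V(0,\cdot)=\alpha_\delta\,\1_{(-\infty,0]}$. Then $V(s,\cdot)$ is nonincreasing in $x$ and, upon comparison with a compactly supported minorant of the initial datum and using $f'_-(0)=\mu_-$, one has $V(s,y)\to1$ uniformly on $y\le cs$ as $s\to+\infty$ for every $c\in(0,2\sqrt{\mu_-})$. Given $\epsilon>0$, I choose once and for all $c\in(0,2\sqrt{\mu_-})$ and $T=T(\epsilon)>0$ such that $V(T,y)\ge1-\epsilon$ whenever $y\le cT$.

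It remains to glue these ingredients uniformly in $t_0\in(-\infty,\tau]$. For any such $t_0$, both $v$ and the translate $V^{t_0}(s,y):=V\big(s-(t_0-T),y-X(t_0-T)+M_0\big)$ satisfy \eqref{eqv} on $[t_0-T,t_0]\times\R$, and $v(t_0-T,\cdot)\ge V^{t_0}(t_0-T,\cdot)$ on $\R$ by the initial estimate above. The parabolic comparison principle then yields $v(t_0,x)\ge V\big(T,x-X(t_0-T)+M_0\big)$ for every $x\in\R$. At this point the time-uniform control of $X$ kicks in: applying \eqref{Xtau} with time window $T$ furnishes a constant $K=K(T)$, independent of $t_0$, such that $|X(t_0)-X(t_0-T)|\le K$. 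Consequently, whenever $x\le X(t_0)-(K+M_0)$,
$$x-X(t_0-T)+M_0\le X(t_0)-X(t_0-T)-K\le 0\le cT,$$
so that $v(t_0,x)\ge1-\epsilon$. Setting $L_\epsilon:=K(T(\epsilon))+M_0$ then gives $v(t,X(t)+y)\ge1-\epsilon$ for all $t\le\tau$ and $y\le-L_\epsilon$, which is the required uniform lower limit. The main obstacle is precisely this upgrade from a positive lower bound to a bound close to~$1$; it is resolved by combining the KPP spreading for the homogeneous limit equation with the uniform bounded-oscillation property \eqref{Xtau} of $X$.
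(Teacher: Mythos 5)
Your proof is correct, and it takes a genuinely different route from the paper's for the delicate half of the statement. The limit as $x\to+\infty$ is handled identically in both (read off directly from the upper bound in \eqref{comparuv} and \eqref{gtf}). For the limit as $x\to-\infty$, the paper argues by contradiction and compactness: assuming $v(t_n,X(t_n)+x_n)\le1-\epsilon$ with $x_n\to-\infty$, it extracts a limiting entire solution $v_\infty$ of \eqref{eqv}; the lower bound in \eqref{comparuv} together with \eqref{gtf} and \eqref{Xtau} forces $v_\infty\ge e^{-\mu_-A_\tau}>0$ on all of $(-\infty,0]\times\R$, and comparison with the spatially homogeneous ODE $\rho'=f_-(\rho)$, whose solutions tend to $1$, yields $v_\infty\equiv1$ there after letting the starting time go to $-\infty$, contradicting $v_\infty(0,0)\le1-\epsilon$. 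You instead make the upgrade from the positive constant $\alpha_\delta$ to $1-\epsilon$ quantitative: you run the Aronson--Weinberger spreading for \eqref{eqv} over a fixed time window $T=T(\epsilon)$ starting from the step datum $\alpha_\delta\,\1_{(-\infty,0]}$ placed at $X(t_0-T)-M_0$, and you use \eqref{Xtau} to control $|X(t_0)-X(t_0-T)|$ uniformly in $t_0\le\tau$, converting the spreading estimate into $v(t_0,X(t_0)+y)\ge1-\epsilon$ for $y\le-L_\epsilon$ with $L_\epsilon$ independent of $t_0$. Both arguments rest on the same two pillars, namely the two-sided comparison of Lemma~\ref{lemuv} and the uniform-in-time properties \eqref{gtf} and \eqref{Xtau} of the front $u$; yours avoids subsequence extraction and produces an explicit $L_\epsilon$, at the price of invoking the spreading result of \cite{aw}, whereas the paper's compactness argument only needs the elementary ODE comparison. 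The one step to keep explicit in your version is the reduction of the spreading statement $\inf_{y\le cs}V(s,y)\to1$ to \cite{aw} via the monotonicity of $V(s,\cdot)$ and comparison with a compactly supported minorant of the initial datum, which you do indicate.
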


\noindent{\bf{Proof.}} First of all, since the function $\Theta$ given in~(\ref{defTheta}) is bounded in $(-\infty,\tau]$, it follows immediately from~(\ref{gtf}) and Lemma~\ref{lemuv} that $v(t,X(t)+x)\to0$ as $x\to+\infty$, uniformly in~$t\in(-\infty,\tau]$.\par
In order to show the other part of the conclusion, let us assume on the contrary
that there are $\epsilon>0$ and a sequence~$(t_n,x_n)_{n\in\N}$ in
$(-\infty,\tau]\times\R$ such that $x_n\to-\infty$ as $n\to+\infty$ and
$v(t_n,X(t_n)+x_n)\le 1-\epsilon$ for all $n\in\N$. Define, for all $n\in\N$ and
$(t,x)\in\R^2$, $\tilde{v}_n(t,x)=v(t+t_n,x+X(t_n)+x_n)$. Up to extraction of a
subsequence, the functions $\tilde{v}_n$ converge locally uniformly in $\R^2$ to
a solution $v_{\infty}$ of~(\ref{eqv}) such that~$0\le v_{\infty}\le 1$ in
$\R^2$ and~$v_{\infty}(0,0)\le1-\epsilon.$ On the
other hand, since $x_n\to-\infty$ as~$n\to+\infty$, it follows from~(\ref{gtf})
and~\eqref{Xtau} that
$$u(t+t_n,x+X(t_n)+x_n)\to1\ \hbox{ as }n\to+\infty,\hbox{ locally uniformly in }(t,x)\in\R^2.$$
Since $\Theta$ is bounded in $(-\infty,\tau]$, whence $\Theta\le A_{\tau}$ in
$(-\infty,\tau]$ for some $A_{\tau}\in\R$, the first inequality in~\eqref{comparuv}
yields
$v_{\infty}(t,x)\ge e^{-\mu_-A_{\tau}}$ for all
$(t,x)\in(-\infty,0]\times\R$.
One infers from the maximum principle that, for every
$s<t\leq0$ and $x\in\R$, one has $v_{\infty}(t,x)\ge\rho(t-s)$, where
$\rho:\R\to(0,1)$ denotes the solution of the ordinary differential equation
$\rho'=f_-(\rho)$ in~$\R$ with $\rho(0)=e^{-\mu_-A_{\tau}}\in(0,1)$.
Since~$\rho(+\infty)=1$, one concludes, by passing to the limit as
$s\to-\infty$, that $v_{\infty}(t,x)\ge 1$ (and then $=1$) for all
$(t,x)\in(-\infty,0]\times\R$.
This contradicts~$v_{\infty}(0,0)\le1-\epsilon<1$. Finally, the
proof of Lemma~\ref{lemvfront} is complete.\hfill$\Box$

\subsubsection*{Step 4: completion of the proof of Theorem~$\ref{th2}$ in the case $c_-=2\sqrt{\mu_-}$}

In this case, remembering~(\ref{boundedspeed}), it only remains to show the second assertion in~(\ref{cpm2ter}), namely
$$c_+:=\liminf_{t\to+\infty}\frac{X(t)}{t}\ge\kappa+\frac{\mu_+}{\kappa}$$
where
$\kappa=\min\big(\sqrt{\mu_+},(c_--\sqrt{c_-^2-4\mu_-})/2\big)=\min\big(\sqrt{
\mu_+},\sqrt{\mu_-}\big)=\sqrt{\ul\mu}$. In the case~$0<\mu_+\le\mu_-$,
then~$\kappa=\sqrt{\mu_+}$, while $c_+\ge2\sqrt{\mu_+}=\kappa+\mu_+/\kappa$ by
Proposition~\ref{pro:spreading}. Hence, the proof is done in this case.\par
Consider now the case~$0<\mu_-<\mu_+$. In other words, $\kappa=\sqrt{\mu_-}$
and one shall show in this case that~$c_+\ge\sqrt{\mu_-}+\mu_+/\sqrt{\mu_-}$.
Let $\epsilon$ be any real number such that $0<\epsilon<1$ and
$(1-\epsilon)\,\mu_+>\mu_-$, and let $T_\e\in\R$ be such
that~$f(t,s)\ge(1-\epsilon)f_+(s)$ for all $(t,s)\in[T_\e,+\infty)\times[0,1]$.
One makes use of the estimate~\eq{decay>} provided by Corollary \ref{cor:exp},
which, applied with $t=T_\e$ and $\lambda=\kappa$
satisfying $\sqrt{\mu_-}<\kappa<\sqrt{(1-\e)\mu_+}$,
allows one to derive~\eqref{ckappaeps}, as done in Section~\ref{sec223}.
The desired result follows by letting $\kappa\to\sqrt{\mu_-}$ and then~$\e\to0$
in~\eqref{ckappaeps}. To sum up, the proof of Theorem~\ref{th2} is
complete in the case $c_-=2\sqrt{\mu_-}$.

\subsubsection*{Step 5: completion of the proof of Theorem~$\ref{th2}$ if $c_->2\sqrt{\mu_-}$}

In this case, one has to show that the asymptotic past and future speeds of $u$
exist and are equal to~$c_{\pm}$, in the sense that~$X(t)\sim c_{\pm}t$ as
$t\to\pm\infty$, and that the solution $u$ converges to some well-identified
profiles along its level sets as $t\to\pm\infty$. We assume throughout this subsection
that $c_->2\sqrt{\mu_-}$ and that~\eqref{hypf+} holds.
The proof of Step~5 is itself divided into several
substeps. Under the notations of Section~\ref{sec31}, we first identify a
measure $\mu$ such that $v=u_{\mu}$ for the equation~(\ref{eqv}), and we show
some properties of the support of $\mu$. Then, we identify the leftmost point of
the support of $\mu$ and we apply Theorem~\ref{thsupport} to determine the
asymptotic past speed of~$u$ and its asymptotic behavior along its level sets.
Lastly, we will compare~$u$ at large positive times to a solution of the
homogeneous equation~(\ref{eqv}) with nonlinearity~$f_+$ instead of~$f_-$ and we
will again apply Theorem~\ref{thsupport} to determine the asymptotic future
speed of $u$.\hfill\break

\noindent{\it{Substep 5.1: identification and elementary properties of a measure $\mu$ such that $v=u_{\mu}$}}\hfill\break

\noindent{}Since $c_-:=\liminf_{t\to-\infty}X(t)/t>2\sqrt{\mu_-}$,
property~(\ref{gtf}) implies that $\sup_{[ct,+\infty)}u(t,\cdot)\to0$
as~$t\to-\infty$ for every $2\sqrt{\mu_-}<c<c_-$. Hence,
$\sup_{[ct,+\infty)}v(t,\cdot)\to0$ as $t\to-\infty$
from Lemma~\ref{lemuv} and the fact that the function $\Theta$ is bounded in,
say, $(-\infty,0]$.
In particular, $v$ satisfies \eqref{hyphn2} with
$c^*=2\sqrt{\mu_-}=2\sqrt{f_-'(0)}$ and thus, since it is a time-global
solution of~(\ref{eqv}) and $f_-$ is a $C^2$ concave function such that
$f_-(0)=f_-(1)=0$ and $f_->0$ on $(0,1)$, it follows from Theorem~1.4
of~\cite{hn2} that, for problem~(\ref{eqv}), i.e.~(\ref{eqg}) with~$g=f_-$, $v$
can be represented as $v=u_{\mu}$ with $\mu\in\mathcal{M}$ supported in
$(-\infty,c]\cup[c,+\infty)\cup\{\infty\}$ for every $2\sqrt{\mu_-}<c<c_-$.
Therefore, $\mu$ is supported in~$(-\infty,c_-]\cup[c_-,+\infty)\cup\{\infty\}$
and, in this case, the inequalities~(\ref{umu}) amount to
$$\baa{l}
\displaystyle M^{-1}\!\!\int_{(-\infty,-c_-]\cup[c_-,+\infty)}\!\!\varphi^-_{|c|}\big(({\rm{sgn}}\,c)x-|c|t-|c|\ln M\big)\,d\mu(c)+M^{-1}\theta^-(t+\ln M)\,\mu(\infty)\vspace{3pt}\\
\qquad\qquad\le\,v(t,x)\,\le\,\displaystyle M^{-1}\!\!\int_{(-\infty,c_-]\cup[c_-,+\infty)}\!\!e^{-\lambda^-_{|c|}(({\rm{sgn}}\,c)x-|c|t-|c|\ln M)}\,d\mu(c)\,+\,M^{-1}e^{\mu_-(t+\ln M)}\mu(\infty)\eaa$$
for all $(t,x)\in\R^2$, where $M=\mu\big((-\infty,c_-]\cup[c_-,+\infty)\cup\{\infty\}\big)>0$ and the functions $\varphi^-_c(x-ct)$ with~$c\ge c^*_-=2\sqrt{f'_-(0)}=2\sqrt{\mu_-}$ denote the traveling fronts connecting $0$ and $1$ for~(\ref{eqv}) with the normalization~\eqref{asymvarphi} and
\be\label{lambdac-}
\lambda^-_c=\frac{c-\sqrt{c^2-4\mu_-}}{2}
\ee
in place of $\lambda_c$ in~\eqref{lambdac}. Furthermore,~$\theta^-:\R\to(0,1)$ denotes the unique solution of $(\theta^-)'(t)=f_-(\theta^-(t))$ in $\R$ such that~$\theta^-(t)\sim e^{\mu_-t}$ as $t\to-\infty$. Now, if $\mu\big((-\infty,-c_-]\cup\{\infty\}\big)>0$, then Lebesgue's dominated convergence theorem implies that $\liminf_{x\to+\infty}v(t,x)\ge M^{-1}\mu\big((-\infty,-c_-]\big)+M^{-1}\theta^-(t+\ln M)\mu(\infty)>0$ for every $t\in\R$, which is ruled out by Lemma~\ref{lemvfront}. Therefore, $\mu\big((-\infty,-c_-]\cup\{\infty\}\big)=0$, that is, the support of $\mu$ is included in $[c_-,+\infty)$ and
\be\label{ineqvmu}
M^{-1}\!\!\int_{[c_-,+\infty)}\!\!\varphi^-_{c}\big(x-ct-c\ln M\big)\,d\mu(c)\le v(t,x)\le M^{-1}\!\!\int_{[c_-,+\infty)}\!\!e^{-\lambda^-_c(x-ct-c\ln M)}\,d\mu(c)
\ee
for all $(t,x)\in\R^2$, with $M=\mu\big([c_-,+\infty)\big)>0$.\par
The following lemma gives an exponential lower bound of $u(t,x)$ and $v(t,x)$ as $x\to+\infty$ in terms of the support of $\mu$. It will used several times in the sequel.

\begin{lemma}\label{lemuvexp} If $\mu\big([c,+\infty)\big)>0$ for some $c\ge c_-$, then
$\liminf_{x\to+\infty}\big(e^{\lambda^-_cx}u(t,x)\big)>0$ and
$\liminf_{x\to+\infty}\big(e^{\lambda^-_cx}v(t,x)\big)>0$ for every $t\in\R$,
where $\lambda^-_c$ is given in~\eqref{lambdac-}.
\end{lemma}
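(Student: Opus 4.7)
The plan is to obtain the exponential lower bound for $v$ directly from the integral representation in~\eqref{ineqvmu}, and then transfer it to $u$ via Lemma~\ref{lemuv}. Restricting the lower bound in~\eqref{ineqvmu} to the subregion $[c,+\infty)$ carrying positive $\mu$-mass gives
$$v(t,x)\ge M^{-1}\int_{[c,+\infty)}\varphi^-_{c'}\big(x-c't-c'\ln M\big)\,d\mu(c'),$$
and the goal is to show that, multiplied by $e^{\lambda^-_cx}$, the right-hand side stays bounded away from $0$ as $x\to+\infty$.

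The key analytical input is the monotonicity of the exponential decay rate: a direct differentiation of~\eqref{lambdac-} gives $d\lambda^-_{c'}/dc'=\tfrac12\big(1-c'/\sqrt{c'^2-4\mu_-}\big)<0$ for $c'>2\sqrt{\mu_-}$, so $\lambda^-_{c'}\le\lambda^-_c$ for every $c'\ge c$, with strict inequality when $c'>c$. Since by assumption $c\ge c_->2\sqrt{\mu_-}=c^*_-$, every $c'$ in the range of integration is supercritical, so~\eqref{asymvarphi} yields $\varphi^-_{c'}(\xi)\,e^{\lambda^-_{c'}\xi}\to1$ as $\xi\to+\infty$. Writing
$$e^{\lambda^-_cx}\varphi^-_{c'}\big(x-c't-c'\ln M\big)=e^{(\lambda^-_c-\lambda^-_{c'})x}\,e^{\lambda^-_{c'}(c't+c'\ln M)}\,\big[\varphi^-_{c'}(\xi)\,e^{\lambda^-_{c'}\xi}\big]_{\xi=x-c't-c'\ln M},$$
one deduces that, for fixed $t\in\R$ and every $c'\ge c$, the $\liminf$ of this expression as $x\to+\infty$ is strictly positive, equal to $e^{\lambda^-_c(ct+c\ln M)}>0$ when $c'=c$ and to $+\infty$ when $c'>c$.

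The conclusion then follows from Fatou's lemma applied to the nonnegative integrand:
$$\liminf_{x\to+\infty}\big(e^{\lambda^-_cx}v(t,x)\big)\ge M^{-1}\int_{[c,+\infty)}\liminf_{x\to+\infty}\big(e^{\lambda^-_cx}\varphi^-_{c'}(x-c't-c'\ln M)\big)\,d\mu(c')>0,$$
the strict positivity on the right coming from the hypothesis $\mu([c,+\infty))>0$ combined with the strict positivity of the integrand. Finally, Lemma~\ref{lemuv} yields $u(t,x)\ge e^{-\mu_-\Theta(t)}v(t,x)$ with $\Theta(t)$ finite for each $t\in\R$, which transfers the exponential lower bound from $v$ to $u$. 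The main technical delicacy is ensuring the joint measurability of $(c',\xi)\mapsto\varphi^-_{c'}(\xi)$ required for Fatou's lemma, which is standard given the continuous dependence of traveling fronts on the speed parameter.
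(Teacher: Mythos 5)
Your proof is correct and follows essentially the same route as the paper's: restrict the lower bound in~(\ref{ineqvmu}) to $[c,+\infty)$, identify the pointwise limit of $e^{\lambda^-_cx}\varphi^-_{c'}(x-c't-c'\ln M)$ via the supercritical asymptotics~(\ref{asymvarphi}) and the strict decrease of $c'\mapsto\lambda^-_{c'}$, pass to the limit under the integral, and transfer the bound to $u$ through Lemma~\ref{lemuv}. The only (harmless) difference is that you apply Fatou's lemma to the uncapped nonnegative integrand, whereas the paper caps the integrand at $1$ and uses dominated convergence.
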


\begin{remark}{\rm
Lemma \ref{lemuvexp} improves Corollary \ref{cor:exp} under the
additional assumption that $u$ is non-critical as~$t\to-\infty$,
i.e.~$c_-:=\liminf_{t\to-\infty}X(t)/t>2\sqrt{\mu_-}$. Indeed the upper bound
for the exponential decay rate given by \eqref{eq:decay>} is $\sqrt{\mu_-}$,
whereas the one in Lemma \ref{lemuvexp} is
$\lambda^-_{c_-}=(c_--\sqrt{c_-^2-4\mu_-})/2$, which is smaller than~$\sqrt{\mu_-}$
because $c_->2\sqrt{\mu_-}$.}
\end{remark}

\noindent{\bf{Proof of Lemma~\ref{lemuvexp}.}} From Lemma~\ref{lemuv}, it is sufficient to show the conclusion for the function~$v$. Let~$t\in\R$ be arbitrary. It follows from~(\ref{ineqvmu}) that
$$e^{\lambda^-_cx}v(t,x)\ge M^{-1}\int_{[c,+\infty)}\min\Big(e^{\lambda_c^-x}\varphi_{c'}^-(x-c't-c'\ln M),1\Big)\,d\mu(c')\ \hbox{ for all }x\in\R.$$
For every $c'>c\,(\ge c_->2\sqrt{\mu_-})$, one has $\varphi_{c'}^-(x-c't-c'\ln M)\sim e^{-\lambda_{c'}^-(x-c't-c'\ln M)}$ as $x\to+\infty$ with~$0<\lambda_{c'}^-<\lambda_c^-\,(\le\lambda_{c_-}<\sqrt{\mu_-})$, whence $\min\big(e^{\lambda_c^-x}\varphi_{c'}^-(x-c't-c'\ln M),1\big)\to1$ as $x\to+\infty$. On the other hand, $\min\big(e^{\lambda_c^-x}\varphi_{c}^-(x-ct-c\ln M),1\Big)\to\min\big(e^{\lambda^-_c(ct+c\ln M)},1\big)$ as $x\to+\infty$. Finally, Lebesgue's dominated convergence theorem implies that
$$\liminf_{x\to+\infty}\big(e^{\lambda^-_cx}v(t,x)\big)\ge M^{-1}\min\big(e^{\lambda^-_c(ct+c\ln M)},1\big)\,\mu\big([c,+\infty)\big)>0,$$
that is the desired conclusion.
\hfill$\Box$\break

Next, we show that the measure $\mu$ is compactly supported in $[c_-,+\infty)$.

\begin{lemma}\label{lemsupport} There is $\tilde{c}_+\in[c_-,+\infty)$ such that
the support of $\mu$ is included in $[c_-,\tilde{c}_+]$. Furthermore, without
loss of generality, $\tilde{c}_+$ can be chosen as the rightmost point of the
support of~$\mu$, in the sense that~$\mu\big((\tilde{c}_+,+\infty)\big)=0$ and
$\mu\big([c',+\infty)\big)>0$ for every
$c'<\tilde{c}_+$.
\end{lemma}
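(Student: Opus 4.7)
My plan is to argue by contradiction: suppose that $\mu\big([C,+\infty)\big)>0$ for every $C\ge c_-$, and derive a contradiction with the bound $\limsup_{t\to+\infty}X(t)/t<+\infty$ coming from~\eqref{boundedspeed}. The key observation is provided by Lemma~\ref{lemuvexp} applied to arbitrarily large $C$: at every $t\in\R$, $\liminf_{x\to+\infty}e^{\lambda^-_C x}u(t,x)>0$ for every $C\ge c_-$. Since $\lambda^-_C=(C-\sqrt{C^2-4\mu_-})/2\to 0$ as $C\to+\infty$, the tail of $u(t,\cdot)$ admits exponential lower bounds of arbitrarily small exponent: for every sufficiently small $\kappa>0$ and every $t\in\R$ there exists $K_\kappa(t)>0$ such that $u(t,x)\ge K_\kappa(t)\,e^{-\kappa x}$ for all $x$ large enough.

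To convert this slow decay into a contradiction with the finite speed of propagation, I will compare $u$ at large positive times with solutions of the limit equation from~\eqref{fpm}. Fix $\epsilon\in(0,1)$ small; by~\eqref{fpm}, pick $T_\epsilon\in\R$ such that $f(t,s)\ge(1-\epsilon)f_+(s)$ for all $t\ge T_\epsilon$ and $s\in[0,1]$, so that $u$ is a supersolution of $w_t=w_{xx}+(1-\epsilon)f_+(w)$ on $[T_\epsilon,+\infty)\times\R$. For any $\kappa\in\big(0,\sqrt{(1-\epsilon)\mu_+}\,\big)$, this limit equation admits a standard decreasing traveling front $\phi_\kappa^\epsilon(x-c(\kappa)t)$ of speed $c(\kappa)=\kappa+(1-\epsilon)\mu_+/\kappa$, with decay $\phi_\kappa^\epsilon(\xi)\sim e^{-\kappa\xi}$ at $+\infty$. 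Taking $\kappa$ small enough, the lower bound $u(T_\epsilon,x)\ge K_\kappa(T_\epsilon)\,e^{-\kappa x}$ for $x$ large, combined with $u(T_\epsilon,\cdot)\to1$ at $-\infty$ from~\eqref{gtf} and the strict positivity of $u(T_\epsilon,\cdot)$ on compacta (via~\eqref{infsup}), enables choosing $\delta>0$ and $x_0\in\R$ such that $u(T_\epsilon,x)\ge\delta\,\phi_\kappa^\epsilon(x-x_0)$ for every $x\in\R$. The KPP property $\delta\, f_+(s)\le f_+(\delta s)$ makes $\delta\,\phi_\kappa^\epsilon\big(x-x_0-c(\kappa)(t-T_\epsilon)\big)$ a subsolution of the $(1-\epsilon)f_+$-equation, and the parabolic comparison principle propagates the bound for $t\ge T_\epsilon$. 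Since $\phi_\kappa^\epsilon(-\infty)=1$, this together with~\eqref{gtf} forces $\liminf_{t\to+\infty}X(t)/t\ge c(\kappa)$. As $\kappa\to 0^+$, $c(\kappa)\to+\infty$, contradicting~\eqref{boundedspeed}. Therefore $\mu$ must be compactly supported, and setting $\tilde c_+:=\sup\supp(\mu)\in[c_-,+\infty)$ yields the second assertion: $\supp(\mu)$ being closed gives $\mu\big((\tilde c_+,+\infty)\big)=0$ and $\tilde c_+\in\supp(\mu)$, so every neighborhood of $\tilde c_+$, and hence every interval $[c',+\infty)$ with $c'<\tilde c_+$, has positive $\mu$-measure.

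The main obstacle I anticipate is the patching step: producing the pointwise inequality $u(T_\epsilon,x)\ge\delta\,\phi_\kappa^\epsilon(x-x_0)$ on all of $\R$ requires coordinating three regimes, namely the right tail (where the slow exponential lower bound from Lemma~\ref{lemuvexp} dominates $\phi_\kappa^\epsilon\sim e^{-\kappa\cdot}$), the left tail (where both sides approach $1$, and one uses $\delta<1$ together with $\phi_\kappa^\epsilon\le 1$), and the compact middle zone (where a positive uniform lower bound for $u(T_\epsilon,\cdot)$ from~\eqref{infsup} plus continuity allows one to shrink $\delta$ as needed). Once this patching is achieved, the remaining comparison propagation and the passage to the limit $\kappa\to 0^+$ are routine, entirely parallel to the argument yielding~\eqref{ckappaeps} in Section~\ref{sec223}.
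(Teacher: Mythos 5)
Your argument is correct and is essentially the paper's own proof: the paper likewise assumes $\mu$ is not compactly supported, invokes Lemma~\ref{lemuvexp} for arbitrarily large $c$ so that $\lambda^-_c\searrow0$, builds a subsolution from a scaled traveling front of a perturbed $f_+$-equation (the paper uses $f_+/2$ where you use $(1-\epsilon)f_+$), and lets the resulting speed $\kappa+(1-\epsilon)\mu_+/\kappa\to+\infty$ contradict~\eqref{boundedspeed}. The patching step you flag is handled in the paper by the single pointwise bound $u(T,x)\ge\min\big(\beta,\beta e^{-\lambda^-_c x}\big)$, which is exactly your three-regime argument condensed.
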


\begin{remark}{\rm Notice that Lemma~$\ref{lemsupport}$ does not follow
immediately from Theorem~$\ref{thsupport}$, since one does not know a priori
that $v=u_{\mu}$ is a transition front connecting $0$ and $1$ for~\eqref{eqv}.
We point out that Lemma~$\ref{lemvfront}$ above only shows that the limits $(0$
and $1)$ of $v(t,X(t)+x)$ as $x\to\pm\infty$ are uniform in any time-interval of
the type~$(-\infty,\tau]$ with $\tau\in\R$. But the uniformity of the limits may
depend on $\tau$ in general. Actually, with similar ideas as in the proof of
Lemma~\ref{lemsupport}, it is not complicated to show the existence of
solutions~$0<\tilde{v}<1$ of~\eqref{eqv} satisfying Lemma~$\ref{lemvfront}$ for some
family $(\tilde{X}(t))_{t\in\R}$ and which are not transition fronts. For
instance, consider the function $\tilde{v}=u_{\tilde{\mu}}$ associated to the
measure $\tilde{\mu}=\sum_{n=0}^{+\infty}2^{-n}\delta_{c_0+n}$, where $c_0$ is
any real number in~$[2\sqrt{\mu_-},+\infty)$. The solution~$\tilde{v}$ satisfies
Lemma~$\ref{lemvfront}$ with, say,~$\tilde{X}(t)=c_0t$ for all $t\in\R$,
but~$\inf_{(-\infty,ct]}\tilde{v}(t,\cdot)\to1$ as~$t\to+\infty$ for every
$c\in\R$. Roughly speaking, the function $\tilde{v}$ has an infinite asymptotic
future speed. Because of~\eqref{Xtau}, it cannot be a transition front
for~\eqref{eqv} in the sense of~\eqref{gtf}, for any family~$(X(t))_{t\in\R}$.}
\end{remark}

\noindent{\bf{Proof of Lemma~\ref{lemsupport}.}} Assume by way of contradiction that
$\mu$ is not compactly supported. Pick
any~$c\in[c_-,+\infty)$. Then $\mu\big([c,+\infty)\big)>0$ and therefore it
follows from Lemma~\ref{lemuvexp}, together with \eqref{gtf} and the positivity
and continuity of $u$, that for any $T\in\R$, there is $\beta\in(0,1)$ such
that
\Fi{u>beta}
\forall\,x\in\R,\quad
u(T,x)\ge\min\big(\beta,\beta\,e^{-\lambda^-_cx}\big).
\Ff
Notice that $\lambda^-_c\searrow0$ as $c\to+\infty$. Thus, for
$c$ large enough, we have that
$\lambda^-_c<\sqrt{\mu_+/2}=\sqrt{f_+'(0)/2}$,
whence equation \eqref{eqg} with $g=f_+/2$ admits a standard traveling front
$\t\vp(x-\gamma_c t)$ connecting~$0$ and~$1$
satisfying~$\t\varphi(\xi)\sim e^{-\lambda_c^-\xi}$ as $\xi\to+\infty$, where
$\gamma_c:=\lambda^-_c+\mu_+/(2\lambda^-_c)\to+\infty$ as $c\to+\infty$.
By~(\ref{fpm}), there exists $T\in\R$ such that
$f(t,s)\ge f_+(s)/2$ for all $(t,s)\in[T,+\infty)\times[0,1]$. This implies
that $\t\vp(x-\gamma_c t)$ is a subsolution to \eqref{eq:P=f} for $t>T$, and
then the same is true for~$\t\beta\t\vp(x-\gamma_c t)$, for any
$\t\beta\in(0,1)$.
Furthermore, by~\eqref{eq:u>beta},~$\t\beta\in(0,1)$ can be chosen in such a
way that $\t\beta\t\vp(x-\gamma_c T)\leq u(T,x)$ for all $x\in\R$.
It then follows from the maximum principle that
$u(t,\gamma_c t)\ge\t\beta\t\vp(0)>0$ for
all~$t>T$, from which, owing to~(\ref{gtf}) or~(\ref{infsup}), we
derive~$c_+=\liminf_{t\to+\infty}X(t)/t\ge\gamma_c$.
Since this inequality holds for all $c$ large enough, letting~$c\to+\infty$ we
eventually get $\liminf_{t\to+\infty}X(t)/t=+\infty$, which is ruled out
by~(\ref{boundedspeed}).\par
Finally, the support of $\mu$ is bounded and there is
$\tilde{c}_+\in[c_-,+\infty)$ as in Lemma~\ref{lemsupport}, that
is~$\tilde{c}_+$ is the rightmost point of the support of $\mu$. The proof of
Lemma~\ref{lemsupport} is thereby complete.~\hfill$\Box$\break

\noindent{\it{Substep 5.2: $c_-$ is the asymptotic past speed of $u$ and $u$ converges to $\varphi_{c_-}^-$ along its level sets as~$t\to-\infty$}}\hfill\break

\noindent{}Let us first show here that $X(t)/t\to c_-$ as $t\to-\infty$. We
already know by definition that $\liminf_{t\to-\infty}X(t)/t=c_-$. From part~(i)
of Theorem~\ref{thsupport} and from Lemma~\ref{lemsupport}, the
solution~$v=u_{\mu}$ of~(\ref{eqv}) is actually a transition front connecting
$0$ and $1$ for this equation. In other words, there is a
family~$(\tilde{X}(t))_{t\in\R}$ of real numbers such that~(\ref{gtf}) holds
for~$v$ and~$(\tilde{X}(t))_{t\in\R}$. Furthermore, from part~(ii) of
Theorem~\ref{thsupport}, the transition front~$v$ has some asymptotic past and
future speeds, which are the leftmost and rightmost points of the support of
$\mu$. Having in hand that the measure~$\mu$ is supported
in~$[c_-,\tilde{c}_+]\subset(2\sqrt{\mu_-},+\infty)$ from
Lemma~\ref{lemsupport}, the leftmost point~$\tilde{c}_-$ of the support of $\mu$
satisfies~$2\sqrt{\mu_-}=c^*_-<c_-\le\tilde{c}_-\le\tilde{c}_+$. Part~(ii) of
Theo\-rem~\ref{thsupport} implies in particular that
$\tilde{X}(t)/t\to\tilde{c}_-$ as~$t\to-\infty$ and that there is a bounded
function~$\tilde{\xi}:\R\to\R$ such that
\be\label{tildexi}
v(t,\tilde{X}(t)+\tilde{\xi}(t)+x)\to\varphi^-_{\tilde{c}_-}(x)\
\hbox{ as }t\to-\infty\hbox{ uniformly in }x\in\R.
\ee\par
But $0<\inf_{t\in\R}v(t,\tilde{X}(t))\le\sup_{t\in\R}v(t,\tilde{X}(t))<1$
from~(\ref{infsup}) applied to the transition front~$v$,
whence~$0<\liminf_{t\to-\infty}u(t,\tilde{X}(t))\le\limsup_{t\to-\infty}u(t,\tilde{X}
(t))<1$ by Lemma~\ref{lemuv} and the fact that~$\Theta(t)\to0$ as $t\to-\infty$.
Together with~(\ref{gtf}), one infers that
\be\label{XtildeX}
\limsup_{t\to-\infty}\big|X(t)-\tilde{X}(t)\big|<+\infty.
\ee
Therefore, $X(t)/t\to\tilde{c}_-$ as $t\to-\infty$. Remembering
that~$c_-=\liminf_{t\to-\infty}X(t)/t$, one gets that
$\tilde{c}_-=c_-$ and $X(t)/t\to c_-$ as $t\to-\infty$.
In other words, $u$ has an asymptotic past speed, which is equal to $c_-$.\par
Finally, setting
\be\label{defxi1}
\xi(t)=\tilde{X}(t)-X(t)+\tilde{\xi}(t)\ \hbox{ for }t\le0,
\ee
it follows from~(\ref{XtildeX}), the boundedness of~$\tilde{\xi}$ and the local
boundedness of $X$ and $\tilde{X}$ that $\xi$ is bounded in~$(-\infty,0]$.
Furthermore,
$$\baa{rcl}
u(t,X(t)+\xi(t)+x)-\varphi^-_{c_-}(x) & = & u(t,\tilde{X}(t)+
\tilde{\xi}(t)+x)-v(t,\tilde{X}(t)+\tilde{\xi}(t)+x)\vspace{3pt}\\
& & +\,v(t,\tilde{X}(t)+\tilde{\xi}(t)+x)-\varphi^-_{c_-}(x)\vspace{3pt}\\
& \to & 0\ \hbox{ as }t\to-\infty\hbox{ uniformly in }x\in\R,\eaa$$
from~(\ref{tildexi}) and Lemma~\ref{lemuv}, together with
$\lim_{t\to-\infty}\Theta(t)=0$. Standard parabolic estimates also imply that
\be\label{convphi-}
u(t,X(t)+\xi(t)+\cdot)\to\varphi^-_{c_-}
\hbox{ in }C^2(\R)\hbox{ as }t\to-\infty.
\ee

\vskip 0.3cm
\noindent{\it{Substep 5.3: $c_+$ is the asymptotic future speed of $u$}}\hfill
\break

\noindent{}Here, we prove the existence of the asymptotic future speed of $u$.
This speed, that will be equal to $c_+$, will be determined obviously by the
limiting nonlinearity $f_+$ (namely, by~$\mu_+=f'_+(0)$), but also by the
rightmost point $\tilde{c}_+$ of the support of $\mu$. To do so, we will
identify the exponential decay rate of $u(t,x)$ and $v(t,x)$ as~$x\to+\infty$ in
terms of $\tilde{c}_+$, for some suitably chosen times $t$.

\begin{lemma}\label{lemfuture}
There holds
\be\label{Xtc+}
\frac{X(t)}{t}\to c_+=\tilde{\kappa}+\frac{\mu_+}{\tilde{\kappa}}\ge\kappa+
\frac{\mu_+}{\kappa}\ \hbox{ as }t\to+\infty
\ee
with
\be\label{tildekappa}
0<\tilde{\kappa}=\min\Big(\frac{\tilde{c}_+-\sqrt{(\tilde{c}_+)^2-4\mu_-}}{2},
\sqrt{\mu_+}\Big)\le\kappa=\min\Big(\frac{c_--\sqrt{c_-^2-4\mu_-}}{2},\sqrt{
\mu_+}\Big)\le\sqrt{\mu_+}.
\ee
\end{lemma}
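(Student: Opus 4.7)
The inequality $\tilde{\kappa} + \mu_+/\tilde{\kappa} \geq \kappa + \mu_+/\kappa$ in \eqref{eq:Xtc+} follows from $\tilde{c}_+ \geq c_- > 2\sqrt{\mu_-}$ (Lemma~\ref{lemsupport}), the monotonicity of $c \mapsto \lambda^-_c$ on $[2\sqrt{\mu_-}, +\infty)$ (which gives $\lambda^-_{\tilde{c}_+} \leq \lambda^-_{c_-}$ and hence $\tilde{\kappa} \leq \kappa \leq \sqrt{\mu_+}$), and the decreasing monotonicity of $\lambda \mapsto \lambda + \mu_+/\lambda$ on $(0, \sqrt{\mu_+}]$. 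The main content is the convergence $X(t)/t \to c_+$, which I would obtain by establishing matching asymptotic upper and lower bounds on $X(t)/t$ as $t \to +\infty$.

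The key ingredient is a two-sided exponential description of the profile of $u$ at $+\infty$: for each fixed $t \in \R$ there exist constants $C_1(t), C_2(t) > 0$ such that
\[
C_1(t)\, e^{-\lambda^-_{\tilde{c}_+} x} \leq u(t, x) \leq C_2(t)\, e^{-\lambda^-_{\tilde{c}_+} x} \quad \text{for all } x \text{ large.}
\]
I would obtain this by first applying the bounds \eqref{eq:ineqvmu} to $v = u_\mu$, exploiting that $\mu$ is supported in $[c_-, \tilde{c}_+]$ with $\tilde{c}_+$ being the rightmost point (so that $\mu([c', \tilde{c}_+]) > 0$ for every $c' < \tilde{c}_+$), and then transferring the bounds to $u$ via Lemma~\ref{lemuv}, using that $\Theta$ is bounded on the whole of $\R$ since $\zeta \in L^1(\R)$ under \eqref{hypf-} and \eqref{hypf+}.

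For the upper bound $\limsup_{t\to+\infty} X(t)/t \leq c_+$, I would fix $\epsilon > 0$ small and choose $T_\epsilon$ with $f(t,s) \leq (1+\epsilon) f_+(s)$ for $t \geq T_\epsilon$, so that $u$ is a subsolution of $w_t = w_{xx} + (1+\epsilon) f_+(w)$ for $t \geq T_\epsilon$. Two cases arise. If $\lambda^-_{\tilde{c}_+} < \sqrt{\mu_+}$ (so $\tilde{\kappa} = \lambda^-_{\tilde{c}_+}$), the exponential upper bound at time $T_\epsilon$ lets me majorize $u(T_\epsilon, \cdot)$ by a translate of the standard traveling front of $(1+\epsilon) f_+$ of exponential decay $\lambda^-_{\tilde{c}_+}$ and speed $\lambda^-_{\tilde{c}_+} + (1+\epsilon) \mu_+ / \lambda^-_{\tilde{c}_+}$; comparison then yields the bound upon letting $\epsilon \to 0$. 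If $\lambda^-_{\tilde{c}_+} \geq \sqrt{\mu_+}$ (so $\tilde{\kappa} = \sqrt{\mu_+}$ and $c_+ = 2\sqrt{\mu_+}$), the decay of $u(T_\epsilon, \cdot)$ is faster than the critical exponent $\sqrt{(1+\epsilon)\mu_+}$; I would then majorize $u(T_\epsilon, \cdot)$ by a translate of the \emph{critical} traveling front $\psi_\epsilon$ of $(1+\epsilon) f_+$, whose asymptotic profile $\xi\, e^{-\sqrt{(1+\epsilon)\mu_+}\, \xi}$ dominates $e^{-\lambda^-_{\tilde{c}_+} x}$ for $x$ large, and conclude $\limsup X(t)/t \leq 2\sqrt{(1+\epsilon)\mu_+}$ before letting $\epsilon \to 0$.

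For the lower bound $\liminf_{t\to+\infty} X(t)/t \geq c_+$, the case $\lambda^-_{\tilde{c}_+} \geq \sqrt{\mu_+}$ is immediate from Proposition~\ref{pro:spreading}. When $\lambda^-_{\tilde{c}_+} < \sqrt{\mu_+}$, I would mirror the upper bound argument with $(1-\epsilon) f_+$ in place of $(1+\epsilon) f_+$ and a standard front of decay slightly above $\lambda^-_{\tilde{c}_+}$, initiating comparison from below via the exponential lower bound on $u(T_\epsilon, \cdot)$; this parallels the argument in Step~4 for the case $c_- = 2\sqrt{\mu_-}$. The main obstacle is the upper bound in the case $\lambda^-_{\tilde{c}_+} \geq \sqrt{\mu_+}$: a naive exponential supersolution would only yield $\limsup X(t)/t \leq \lambda^-_{\tilde{c}_+} + \mu_+/\lambda^-_{\tilde{c}_+}$, strictly larger than the desired $2\sqrt{\mu_+}$, and one genuinely needs to compare with the critical traveling front of $(1+\epsilon) f_+$ to capture the relaxation of the faster-than-critical initial profile to the critical one under the KPP flow.
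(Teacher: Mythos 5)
Your overall strategy is the same as the paper's (Substep~5.3): control the profile of $u$ at a large time $T_\epsilon$ at $+\infty$ through the representation $v=u_\mu$, the bounds \eqref{ineqvmu} and Lemma~\ref{lemuv}, and then compare $u$ for $t\ge T_\epsilon$ with solutions of the homogeneous equations with nonlinearities $(1\pm\epsilon)f_+$. The one real methodological difference is the comparison tool: the paper feeds the initial data $\min(\gamma_\epsilon e^{-\lambda x},1)$ and $\min(\rho_\epsilon,\rho_\epsilon e^{-\lambda x})$ directly into Uchiyama's spreading result \cite{u}, which returns the spreading speed $\kappa+(1\pm\epsilon)\mu_+/\kappa$ with $\kappa=\min(\lambda,\sqrt{(1\pm\epsilon)\mu_+})$ uniformly in $\lambda$, with no case distinction between supercritical and critical decay; you instead compare with traveling fronts, which forces the case split you describe. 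That substitution is where a genuine gap appears. In the borderline case $\lambda^-_{\tilde{c}_+}=\sqrt{\mu_+}$ (which occurs for an admissible value of $\tilde c_+$ whenever $\mu_+<\mu_-$), your assertion that ``the decay of $u(T_\epsilon,\cdot)$ is faster than the critical exponent $\sqrt{(1+\epsilon)\mu_+}$'' is false for every $\epsilon>0$: the critical front of $(1+\epsilon)f_+$ behaves like $\xi\,e^{-\sqrt{(1+\epsilon)\mu_+}\,\xi}=o\big(e^{-\sqrt{\mu_+}\,\xi}\big)$ and therefore does \emph{not} dominate the profile $e^{-\lambda^-_{\tilde{c}_+}x}=e^{-\sqrt{\mu_+}\,x}$, so the comparison cannot be initialized. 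The repair is easy (use the \emph{supercritical} front of $(1+\epsilon)f_+$ with decay rate exactly $\sqrt{\mu_+}<\sqrt{(1+\epsilon)\mu_+}$, whose speed $(2+\epsilon)\sqrt{\mu_+}$ tends to $2\sqrt{\mu_+}$, or simply quote the spreading result as the paper does), but as written the step fails.

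Two smaller points. First, your claimed two-sided bound $C_1(t)e^{-\lambda^-_{\tilde{c}_+}x}\le u(t,x)\le C_2(t)e^{-\lambda^-_{\tilde{c}_+}x}$ is too strong on the lower side: when $\mu(\{\tilde{c}_+\})=0$, Lemma~\ref{lemuvexp} and \eqref{ineqvmu} only give lower bounds at the rates $\lambda^-_{c'}$ for $c'<\tilde{c}_+$, not at $\lambda^-_{\tilde{c}_+}$ itself; this is why the paper works with $c_\delta=\max(c_-,\tilde{c}_+-\delta)$ and passes to the limit $\delta\to0$. Your lower-bound argument only uses a rate ``slightly above'' $\lambda^-_{\tilde{c}_+}$, so this is harmless, but the justification via ``$\zeta\in L^1(\R)$'' is also incorrect ($\zeta$ controls $f/f_--1$ and is not integrable near $+\infty$ in general); boundedness of $\Theta$ on $(-\infty,T_\epsilon]$ is what is actually needed and available. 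Second, a bare translate of a traveling front cannot majorize $u(T_\epsilon,\cdot)$, since both tend to $1$ at $-\infty$ with uncontrolled relative rates; one must use $\min\big(M\phi,1\big)$ with $M\ge1$, which is a generalized supersolution by the KPP hypothesis, as the paper does throughout. The upper bound for $\phi$, the lower bound via Proposition~\ref{pro3}, the reduction of the inequality in \eqref{Xtc+} to the monotonicity of $\lambda\mapsto\lambda+\mu_+/\lambda$ on $(0,\sqrt{\mu_+}]$, and the use of Proposition~\ref{pro:spreading} for the lower bound when $\tilde\kappa=\sqrt{\mu_+}$ are all correct and match the paper.
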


\noindent{\bf{Proof.}} The strategy consists in establishing lower and upper
bounds of $X(t)/t$ as $t\to+\infty$ by comparing~$u$ at large time with some
solutions of reaction-diffusion equations with nonlinearities of the type
$(1\pm\epsilon)f_+$. In the proof of the lemma, $0<\epsilon<1$ is arbitrary.
From~(\ref{fpm}), there is~$T_{\epsilon}\in\R$ such that, for all
$t\ge T_{\epsilon}$ and~$s\in[0,1]$,
$(1-\epsilon)\,f_+(s)\le f(t,s)\le(1+\epsilon)\,f_+(s)$.\par
Let us first prove the upper bound for $\limsup_{t\to+\infty}X(t)/t$.
Lemma~\ref{lemuv} yields $u(T_{\epsilon},x)\le
v(T_{\epsilon},x)\,e^{\mu_-\Theta(T_{\epsilon})}$ for all $x\in\R$. On the other
hand, since~$v$ is a solution of~(\ref{eqv}) of the type $v=u_{\mu}$ and since
the support of $\mu$ is included
in~$[c_-,\tilde{c}_+]\subset[2\sqrt{\mu_-},+\infty)$, it follows from
Proposition~\ref{pro3}
that~$v(T_{\epsilon},x)=u_{\mu}(T_{\epsilon},x)\le\varphi^-_{\tilde{c}_+}
\big((\varphi^-_{\tilde{c}_+})^{-1}(v(T_{\epsilon},0))+x\big)$ for all
$x\ge0$.
Remember also that, since~$\tilde{c}_+\ge c_->2\sqrt{\mu_-}$, one has
$\varphi_{\tilde{c}_+}^-(\xi)\le e^{-\lambda^-_{\tilde{c}_+}\xi}$ for all
$\xi\in\R$, where
$\lambda^-_{\tilde{c}_+}=(\tilde{c}_+-\sqrt{\tilde{c}_+^2-4\mu_-})/2$.
Therefore, there is a real number $\gamma_{\epsilon}>0$ such that
$u(T_{\epsilon},x)\le\min\big(\gamma_{\epsilon}\,e^{-\lambda^-_{\tilde{c}_+}x},
1\big)$ for all $x\in\R$. Let $\overline{u}^{\epsilon}$ be the solution of the
Cauchy problem
$$\left\{\baa{rcl}
\overline{u}^{\epsilon}_t  & = & \overline{u}^{\epsilon}_{xx}+(1+\epsilon)\,
f_+(\overline{u}^{\epsilon}),\ \ t>0,\ x\in\R,\vspace{3pt}\\
\overline{u}^{\epsilon}(0,x) & = & \min\big(\gamma_{\epsilon}\,
e^{-\lambda^-_{\tilde{c}_+}x},1\big),\ \ x\in\R.\eaa\right.$$
The maximum principle implies that
$0<u(t,x)\le\overline{u}^{\epsilon}(t-T_{\epsilon},x)$ for all
$(t,x)\in[T_{\epsilon},+\infty)\times\R$. But, by~\cite{u}, the function
$\overline{u}^{\epsilon}$ spreads to the right with the speed
$\kappa_{\epsilon}+(1+\epsilon)\mu_+/\kappa_{\epsilon}$,
where~$\kappa_{\epsilon}=\min\big(\lambda^-_{\tilde{c}_+},\sqrt{
(1+\epsilon)\mu_+}\big)$. In particular,
$\sup_{[(\kappa_{\epsilon}+(1+\epsilon)\mu_+/\kappa_{\epsilon}+\eta)\,t,+\infty)
}\overline{u}^{\epsilon}(t,\cdot)\to0$ as~$t\to+\infty$ for every $\eta>0$,
whence
$$\sup_{\big[(\kappa_{\epsilon}+(1+\epsilon)\mu_+/\kappa_{\epsilon}+\eta)\,
(t-T_{\epsilon}),+\infty\big)}u(t,\cdot)\to0\ \hbox{ as }t\to+\infty.$$
It follows then from~(\ref{infsup}) that
$X(t)-\big(\kappa_{\epsilon}+(1+\epsilon)
\mu_+/\kappa_{\epsilon}+\eta\big)\,(t-T_{\epsilon})\to-\infty$ as $t\to+\infty$,
whence
$\limsup_{t\to+\infty}X(t)/t\le\kappa_{\epsilon}+(1+\epsilon)\mu_+/\kappa_{
\epsilon}+\eta$. Since $\eta>0$ and $\epsilon>0$ can be arbitrarily small, one
gets that
\be\label{kappatilde1}
\limsup_{t\to+\infty}\frac{X(t)}{t}\le
\tilde{\kappa}+\frac{\mu_+}{\tilde{\kappa}},
\ee
where $\tilde{\kappa}=\min\big(\lambda^-_{\tilde{c}_+},\sqrt{\mu_+}\big)>0$
is as in~\eqref{tildekappa}.\par
Let us now prove the lower bound of $\liminf_{t\to+\infty}X(t)/t$. For every
$\delta>0$, set $c_{\delta}=\max(c_-,\tilde{c}_+-\delta)\,(\ge c_-)$. Since
$c_-$ and $\tilde{c}_+$ are the leftmost and rightmost points of the support of
$\mu$, one has $\mu\big([c_{\delta},+\infty)\big)>0$ for every~$\delta>0$ and
Lemma~\ref{lemuvexp}
yields~$\liminf_{x\to+\infty}e^{\lambda^-_{c_{\delta}}x}u(T_{\epsilon},x)>0$.
Since $u(T_{\epsilon},-\infty)=1$ and $u(T_{\epsilon},\cdot)$ is positive and
continuous in $\R$, it follows that there is a positive real number
$\rho_{\epsilon}$ such that
$u(T_{\epsilon},x)\ge\min\big(\rho_{\epsilon},\rho_{\epsilon}\,
e^{-\lambda^-_{c_{\delta}}x}\big)$ for all $x\in\R$. Let
$\underline{u}^{\epsilon}$ be the solution of the Cauchy problem
$$\left\{\baa{rcl}
\underline{u}^{\epsilon}_t  & = & \underline{u}^{\epsilon}_{xx}+(1-\epsilon)\,
f_+(\underline{u}^{\epsilon}),\ \ t>0,\ x\in\R,\vspace{3pt}\\
\underline{u}^{\epsilon}(0,x) & = & \min\big(\rho_{\epsilon},\rho_{\epsilon}\,
e^{-\lambda^-_{c_{\delta}}x}\big),\ \ x\in\R.\eaa\right.$$
The maximum principle implies that $1>u(t,x)\ge\underline{u}^{\epsilon}
(t-T_{\epsilon},x)$ for all $(t,x)\in[T_{\epsilon},+\infty)\times\R$. But,
by~\cite{u}, the function $\underline{u}^{\epsilon}$ spreads to the right with
the speed $\kappa_{\epsilon,\delta}+(1-\epsilon)\mu_+/\kappa_{\epsilon,\delta}$,
where~$\kappa_{\epsilon,\delta}=\min\big(\lambda^-_{c_{\delta}},\sqrt{
(1-\epsilon)\mu_+}\big)$. In particular,
$\inf_{(-\infty,(\kappa_{\epsilon,\delta}+(1-\epsilon)\mu_+/\kappa_{\epsilon,
\delta}-\eta)\,t]}\underline{u}^{\epsilon}(t,\cdot)\to1$ as~$t\to+\infty$ for
every $\eta>0$, whence
$\inf_{(-\infty,(\kappa_{\epsilon,\delta}+(1-\epsilon)\mu_+/
\kappa_{\epsilon,\delta}-\eta)\,(t-T_{\epsilon})]}u(t,\cdot)\to1$ as
$t\to+\infty$.
It follows then from~(\ref{infsup}) that
$$X(t)-\big(\kappa_{\epsilon,\delta}+
(1-\epsilon)\mu_+/\kappa_{\epsilon,\delta}-\eta\big)\,(t-T_{\epsilon}
)\to+\infty\ \hbox{ as }t\to+\infty,$$
whence
$\liminf_{t\to+\infty}X(t)/t\ge\kappa_{\epsilon,\delta}
+(1-\epsilon)\mu_+/\kappa_{\epsilon,\delta}-\eta$. Since $\eta>0$, $\delta>0$
and $\epsilon>0$ can be arbitrarily small and since
$\lambda^-_{c_{\delta}}\to\lambda^-_{\tilde{c}_+}$ as~$\delta\to0$, one gets
that $\liminf_{t\to+\infty}X(t)/t\ge\tilde{\kappa}+\mu_+/\tilde{\kappa}$ with
$\tilde{\kappa}>0$ as in~(\ref{tildekappa}). Together with~(\ref{kappatilde1}),
one concludes that
$X(t)/t\to\tilde{\kappa}+\mu_+/\tilde{\kappa}$ as $t\to
+\infty$.
Since $c_+$ was defined as the liminf of $X(t)/t$ as $t\to+\infty$, this means
that $c_+=\tilde{\kappa}+\mu_+/\tilde{\kappa}$. Furthermore, the inequalities
$2\sqrt{\mu_-}<c_-\le\tilde{c}_+$
yield~$0<\lambda^-_{\tilde{c}_+}\le\lambda^-_{c_-}=(c_--\sqrt{c_-^2-4\mu_-})/2$ and
$0<\tilde{\kappa}=\min(\lambda^-_{\tilde{c}_+},\sqrt{\mu_+})\le\min(\lambda^-_{
c_-},\sqrt{\mu_+})=\kappa\le\sqrt{\mu_+}$, where $\kappa$ is as
in~(\ref{tildekappa}). Finally,~\eqref{Xtc+} holds and the proof of
Lemma~\ref{lemfuture} is complete.\hfill$\Box$\break

\noindent{\it{Substep 5.4: $u$ converges to a well identified
profile~$\varphi^+_{c_+}$ along its level sets as~$t\to+\infty$}}\hfill\break

\noindent{}The proof is based on some comparisons between $u$ and some
solutions
of the homogeneous equation~(\ref{eqf+}) below with the reaction term exactly
equal to~$f_+$. A Liouville type result about the classification of solutions of
homogeneous equations which are asymptotically trapped between two shifts of a
standard traveling front will also be used, as in Step~3 of the proof of
Proposition~\ref{pro:exsupercrit}.\par
First of all, let $0<\varphi_{c_+}^+(x-c_+t)<1$ be a standard traveling front
connecting $0$ and $1$ for the homogeneous equation
\be\label{eqf+}
w_t=w_{xx}+f_+(w),
\ee
with the speed $c_+=\tilde{\kappa}+\mu_+/\tilde{\kappa}\ge2\sqrt{\mu_+}=
2\sqrt{f'_+(0)}$ as in~(\ref{Xtc+})-(\ref{tildekappa}). Up to normalization, one
can assume that, as
$\xi\to+\infty$, $\varphi^+_{c_+}(\xi)\sim e^{-\tilde{\kappa}\xi}$
if~$c_+>2\sqrt{\mu_+}$ while
$\varphi^+_{c_+}(\xi)\sim\xi\,e^{-\tilde{\kappa}\xi}=\xi\,e^{-\sqrt{\mu_+}\xi}$
if $c_+=2\sqrt{\mu_+}$. For every $t\in\R$, since $u(t,\cdot)$ is continuous and
converges to $0$ and $1$ at $\pm\infty$, the real number
$$X^+(t)=\max\big\{x\in\R;\ u(t,x)=\varphi_{c_+}^+(0)\big\}$$
is well defined, and $u(t,X^+(t))=\varphi_{c_+}^+(0)$. Denote
\be\label{defxi2}
\xi(t)=X^+(t)-X(t)\ \hbox{ for }t>0.
\ee
The function $\xi$ is bounded in $(0,+\infty)$, by~(\ref{gtf}). It will be the
one used in the desired conclusion~\eq{phi+-} for~$t>0$, together
with~(\ref{defxi1}) and~\eqref{convphi-} for $t\le0$.\par
Let us now introduce some auxiliary functions $u^+$ and $v^+$. From
assumption~(\ref{hypf+}), there holds, for all~$t>0$ and $s\in[0,1]$,
$(1-\tilde{\zeta}(t))\,f_+(s)\le f(t,s)\le(1+\tilde{\zeta}(t))\,f_+(s)$.
Set $\tilde{\Theta}(t)=\int_{0}^t\tilde{\zeta}(\tau)\,d\tau$ for $t\ge0$ and
notice that
\be\label{tildeTheta}
0\le\tilde{\Theta}(t)\le\tilde{\Theta}_{\infty}:=\int_{0}^{+\infty}
\tilde{\zeta}(\tau)\,d\tau<+\infty\ \hbox{ for all }t\ge0.
\ee
Firstly, let $u^+$ be the solution of the Cauchy problem
\be\label{defu+}\left\{\baa{rcl}
u^+_t & = & u^+_{xx}+f_+(u^+),\ \ t>0,\ x\in\R,\vspace{3pt}\\
u^+(0,x) & = & u(0,x),\ \ x\in\R.\eaa\right.
\ee
As in the proof of Lemma~\ref{lemuv}, since $s\mapsto f_+(s)/s$ is
nonincreasing
on $(0,1]$ and~$f'_+(0)=\mu_+>0$, one can prove that
\be\label{u+u}
0<u^+(t,x)\,e^{-\mu_+\tilde{\Theta}(t)}\le u(t,x)\le u^+(t,x)\,
e^{\mu_+\tilde{\Theta}(t)}\ \hbox{ for all }(t,x)\in[0,+\infty)\times\R.
\ee
Secondly, let $v^+$ be the solution of the Cauchy problem
\be\label{defv+}\left\{\baa{rcl}
v^+_t & = & v^+_{xx}+f_+(v^+),\ \ t>0,\ x\in\R,\vspace{3pt}\\
v^+(0,x) & = & v(0,x),\ \ x\in\R,\eaa\right.
\ee
where we recall that $0<v<1$ solves~(\ref{eqv}). Setting
$\alpha=e^{-\mu_-\Theta(0)}\in(0,1]$, Lemma~\ref{lemuv} then implies
that~$0<\alpha\,v(0,\cdot)\le u(0,\cdot)\le\min\big(\alpha^{-1}v(0,\cdot),1\big)$ in $\R$,
that is, $0<\alpha\,v^+(0,\cdot)\le u^+(0,\cdot)\le\min
\big(\alpha^{-1}v^+(0,\cdot),1\big)$. Since $s\mapsto f_+(s)/s$ is
nonincreasing on $(0,1]$, it follows that $\alpha\,v^+$ is a subsolution and
$\min\big(\alpha^{-1}v^+,1\big)$ is a supersolution of the
equation~(\ref{defu+}) satisfied by $u^+$. Therefore, the maximum principle
yields
\be\label{u+v}
0<\alpha\,v^+(t,x)\le u^+(t,x)\le\min\big(\alpha^{-1}v^+(t,x),1\big)\
\hbox{ for all }(t,x)\in[0,+\infty)\times\R.
\ee
Gathering~(\ref{tildeTheta}),~(\ref{u+u}) and~(\ref{u+v}) together with the
fact that $u$ ranges in~$(0,1)$, one infers that
\be\label{ubeta}
0<\beta\,v^+(t,x)\le u(t,x)\le\min\big(\beta^{-1}v^+(t,x),1\big)\
\hbox{ for all }(t,x)\in[0,+\infty)\times\R,
\ee
where $\beta=\alpha\,e^{-\mu_+\tilde{\Theta}_{\infty}}=e^{-\mu_-\Theta(0)-\mu_+
\tilde{\Theta}_{\infty}}\in(0,1]$.\par
The following lemma establishes the exact decay rate of $v(0,x)$ as
$x\to+\infty$
(we point out that the same lemma would actually hold at any fixed time
$t\in\R$). We recall that the solution~$0<v<1$ of~(\ref{eqv}) is of the type
$v=u_{\mu}$ and that $\tilde{c}_+\in[c_-,+\infty)$ denotes the rightmost point
of the measure~$\mu$.

\begin{lemma}\label{lemmev}
We recall that $\lambda^-_{\tilde{c}_+}=(\tilde{c}_+-\sqrt{\tilde{c}_+^2-4\mu_-})/2$. For every $x_0\in\R$, there holds
\be\label{vtildeT}
\frac{v(0,x+x_0)}{v(0,x)}\to e^{-\lambda^-_{\tilde{c}_+}x_0}\ \hbox{ as }
x\to+\infty.
\ee
\end{lemma}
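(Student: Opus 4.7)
Set $\lambda^*:=\lambda^-_{\tilde{c}_+}$, $M:=\mu([c_-,\tilde{c}_+])>0$, and recall that the representation \eqref{ineqvmu}, specialised at $t=0$, yields the two-sided bound $L(x)\le v(0,x)\le U(x)$ with
\[
U(x) := M^{-1}\int_{[c_-,\tilde{c}_+]} e^{-\lambda^-_c(x-c\ln M)}\,d\mu(c),\qquad L(x) := M^{-1}\int_{[c_-,\tilde{c}_+]} \varphi^-_c(x-c\ln M)\,d\mu(c).
\]
The strategy is to show that, as $x\to+\infty$, one has $U(x+x_0)/U(x)\to e^{-\lambda^* x_0}$ and $L(x)/U(x)\to 1$; the conclusion \eqref{vtildeT} then follows immediately from the sandwich
\[
\frac{L(x+x_0)}{U(x)}\,=\,\frac{L(x+x_0)}{U(x+x_0)}\cdot\frac{U(x+x_0)}{U(x)}\,\le\,\frac{v(0,x+x_0)}{v(0,x)}\,\le\,\frac{U(x+x_0)}{L(x)}\,=\,\frac{U(x+x_0)}{U(x)}\cdot\frac{U(x)}{L(x)}.
\]

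For the first limit, I would rewrite $U(x)=\int e^{-\lambda^-_c x} h(c)\,d\mu(c)$ with $h(c):=M^{-1}e^{\lambda^-_c c\ln M}$ a strictly positive continuous function on $[c_-,\tilde{c}_+]$, and introduce the probability measures $d\nu_x(c):=U(x)^{-1}e^{-\lambda^-_c x}h(c)\,d\mu(c)$ on $[c_-,\tilde{c}_+]$. Since the map $c\mapsto\lambda^-_c$ is continuous and strictly decreasing on $[c_-,\tilde{c}_+]\subset(2\sqrt{\mu_-},+\infty)$, its unique minimum over the support of $\mu$ is attained at the rightmost point $\tilde{c}_+$, and by definition of $\tilde{c}_+$ the measure $\mu$ places positive mass in every neighbourhood $[\tilde{c}_+-\delta',\tilde{c}_+]$. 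Estimating numerator and denominator of $\nu_x([c_-,\tilde{c}_+-\delta])$ by the obvious exponential bounds then gives $\nu_x([c_-,\tilde{c}_+-\delta])\to 0$ for every $\delta>0$, i.e.\ $\nu_x$ concentrates weakly at $\tilde{c}_+$. Applying this to the bounded continuous function $c\mapsto e^{-\lambda^-_c x_0}$ yields
\[
\frac{U(x+x_0)}{U(x)}=\int_{[c_-,\tilde{c}_+]}e^{-\lambda^-_c x_0}\,d\nu_x(c)\longrightarrow e^{-\lambda^* x_0}\quad\text{as }x\to+\infty.
\]

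For the ratio $L(x)/U(x)\to 1$, I would use that the compact interval $[c_-,\tilde{c}_+]$ lies strictly inside the supercritical range, so each $\varphi^-_c$ has a purely exponential tail governed by \eqref{asymvarphi}. By continuous dependence of the front profile on the speed in the underlying second-order ODE, the normalisation $\varphi^-_c(\xi)\,e^{\lambda^-_c\xi}\to 1$ as $\xi\to+\infty$ is uniform in $c\in[c_-,\tilde{c}_+]$; together with the pointwise bound $\varphi^-_c(\xi)\le e^{-\lambda^-_c\xi}$ recorded after \eqref{asymvarphi}, dominated convergence inside the $d\mu$-integral (and the fact that $x-c\ln M\to+\infty$ uniformly in $c$ as $x\to+\infty$) then gives $L(x)/U(x)\to 1$. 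Inserting these two asymptotics into the sandwich above proves \eqref{vtildeT}.

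\textbf{Main obstacle.} The genuinely delicate ingredient is the uniform-in-$c$ expansion $\varphi^-_c(\xi)\sim e^{-\lambda^-_c\xi}$ on the compact supercritical speed interval $[c_-,\tilde{c}_+]$; once this uniformity is available, the Laplace-type concentration of $\nu_x$ at $\tilde{c}_+$ and the concluding sandwich are routine.
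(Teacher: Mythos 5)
Your overall architecture is sound and is essentially the route the paper takes: the two-sided bound \eqref{ineqvmu} at $t=0$, a Laplace-type concentration of the mass of the integrals at the rightmost point $\tilde{c}_+$ of the support of $\mu$ (your probability measures $\nu_x$ are a cleaner packaging of the paper's splitting of the integral into the pieces over $[c_-,c')$ and $[c',\tilde{c}_+]$, the estimate $I_2=o(I_3)$, and the final passage $c'\to\tilde c_+$ using the continuity of $c\mapsto\lambda^-_c$), and a sandwich between the $\varphi^-_c$-integral and the pure-exponential integral. The concentration step and the concluding sandwich are correct as you describe them.

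The gap is exactly where you locate it, and your proposed fix does not close it. The claim that $\varphi^-_c(\xi)\,e^{\lambda^-_c\xi}\to1$ as $\xi\to+\infty$ \emph{uniformly} in $c$ on a compact supercritical speed interval does not follow from ``continuous dependence of the front profile on the speed''. Continuous dependence gives, for each fixed $\xi$, continuity of $c\mapsto\varphi^-_c(\xi)$, hence uniform convergence on compact sets of $\xi$; it says nothing about interchanging the limit $\xi\to+\infty$ with the supremum over $c$. A Dini-type argument would require monotonicity in $\xi$ of $\varphi^-_c(\xi)e^{\lambda^-_c\xi}$, which is not available, and a compactness argument along a diagonal sequence $c_n\to c_\infty$, $\xi_n\to+\infty$ runs into precisely the same problem of controlling the tails uniformly. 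The paper devotes the proof of the claim \eqref{claimeta} to this point: it constructs, for $c$ ranging in a compact interval $[a,b]\subset(2\sqrt{\mu_-},+\infty)$, the explicit subsolution $\underline{u}_c(x)=\max\big(e^{-\lambda^-_c x}-B\,e^{-(1+\alpha)\lambda^-_c x},0\big)$ of the traveling-front ODE, with $\alpha$ and $B$ chosen independently of $c$ (using that $(1+\alpha)\lambda^-_c$ stays strictly between the two roots of $X^2-cX+\mu_-=0$ uniformly in $c$, and the $C^2$ bound $f_-(s)\ge\mu_- s-s^{1+\alpha}$ near $0$), and then slides $\underline{u}_c$ under $\varphi^-_c$ by a strong-maximum-principle argument. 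This yields $\varphi^-_c(x)\ge e^{-\lambda^-_c x}\big(1-B\,e^{-\alpha\lambda^-_c x}\big)$ with constants independent of $c$, which is the uniform lower bound your step $L(x)/U(x)\to1$ requires. You must supply an argument of this kind (or an equivalent uniform ODE/stable-manifold estimate); as written, the key ingredient is asserted rather than proved.
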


We point out that Proposition \ref{pro3} and Lemma \ref{lemuvexp}
immediately
imply that $v$ cannot have exponential decay rate respectively smaller and
larger than $\lambda^-_{\tilde{c}_+}$ as $x\to+\infty$, but this is not enough
for applying the results of Uchiyama~\cite{u} to $v^+$.
The proof of Lemma \ref{lemmev} uses the inequalities~(\ref{umu}) applied to
$v=u_{\mu}$. Since it is a bit technical, we postpone it in the next
subsection.
We first complete the proof of Theorem~\ref{th2}, that is the proof
of~\eq{phi+-} as $t\to+\infty$.\par
As already mentioned in the proof of Theorem~\ref{thsupport}, the function
$v=u_{\mu}$ is continuously decreasing with respect to $x$. In particular,
$v^+(0,\cdot)=v(0,\cdot)$ is continuously decreasing in~$\R$, with
$v^+(0,-\infty)=1$ and~$v^+(0,+\infty)=0$. From the maximum principle, for
every~$t\ge0$, the function $v^+(t,\cdot)$ is continuous and decreasing in $\R$
and converges to $0$ and $1$ at~$\pm\infty$. Therefore, for every $t\ge0$, there
is a unique real number~$Y^+(t)$ such that
\be\label{defY+}
v^+\big(t,Y^+(t)\big)=\frac12.
\ee
Owing to the definition of $v^+$ in~(\ref{defv+}), it follows then from
Lemma~\ref{lemmev} above and Theorems~8.1,~8.2 and~8.5 of~\cite{u} that
\be\label{convphi+}
\sup_{x\in[-Y^+(t),+\infty)}\big|v^+\big(t,Y^+(t)+x\big)-\varphi_{c_+}^+(x_0+x)
\big|\to0\ \hbox{ as }t\to+\infty,
\ee
where $x_0=(\varphi^+_{c_+})^{-1}(1/2)$ and $c_+=\tilde{\kappa}+\mu_+/
\tilde{\kappa}$ with
$0<\tilde{\kappa}=\min\big(\lambda^-_{\tilde{c}_+},\sqrt{\mu_+}\big)\le\sqrt{
\mu_+}$. Furthermore,~$Y^+(t)/t\to c_+$ as~$t\to+\infty$. Thus,
since~$0<v^+<1$ is decreasing with respect to~$x$
and~$\varphi^+_{c_+}(-\infty)=1$, the convergence~(\ref{convphi+})
holds uniformly in $x\in\R$, that is
\be\label{convphi+2}
\sup_{x\in\R}\big|v^+\big(t,Y^+(t)+x\big)-\varphi_{c_+}^+(x_0+x)\big|\to0\
\hbox{ as }t\to+\infty.
\ee\par
Let $(t_p)_{p\in\N}$ be any sequence of real numbers in $[0,+\infty)$ such that
$t_p\to+\infty$ as~$p\to+\infty$. For every~$p\in\N$ and
$(t,x)\in[t_p,+\infty)\times\R$, set
$v^+_p(t,x)=v^+\big(t_p+t,Y^+(t_p)+x\big)$. Up to extraction of a subsequence,
the functions $v_p^+$ converge in $C^{1,2}_{loc}(\R^2)$ as~$p\to+\infty$ to a
solution $0\le v^+_{\infty}\le1$ of~(\ref{eqf+}) in $\R^2$.

\begin{lemma}\label{vinfty+}
One has $v^+_{\infty}(t,x)=\varphi^+_{c_+}(x-c_+t+x_0)$ for all $(t,x)\in\R^2$.
\end{lemma}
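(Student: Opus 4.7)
The plan is to identify $v^+_\infty(0,\cdot)$ directly from the uniform convergence \eqref{convphi+2}, and then to propagate the identification to all $t\in\R$ by exploiting the autonomous nature of equation~\eqref{eqf+}.

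First, I would evaluate $v^+_p(0,x)=v^+(t_p,Y^+(t_p)+x)$: since $t_p\to+\infty$, \eqref{convphi+2} yields $v^+_p(0,\cdot)\to\varphi^+_{c_+}(x_0+\cdot)$ uniformly on $\R$, and combined with the local uniform convergence of $v^+_p$ to $v^+_\infty$, this gives $v^+_\infty(0,x)=\varphi^+_{c_+}(x_0+x)$ for every $x\in\R$.

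Next, for $t\ge 0$, both $v^+_\infty$ and $(t,x)\mapsto\varphi^+_{c_+}(x_0+x-c_+t)$ are bounded classical solutions of the autonomous equation~\eqref{eqf+} on $[0,+\infty)\times\R$ sharing the same initial datum at $t=0$. Since $f_+$ is of class $C^1$, forward uniqueness of the Cauchy problem yields the identity $v^+_\infty(t,x)=\varphi^+_{c_+}(x_0+x-c_+t)$ for all $t\ge 0$ and $x\in\R$.

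The main obstacle will be the case $t<0$, where forward uniqueness alone is insufficient. To handle it, I would apply \eqref{convphi+2} at the time $t_p+t$, which is positive for $p$ large enough since $t_p\to+\infty$. Setting $\tilde{w}_p(s,y):=v^+(t_p+t+s,Y^+(t_p+t)+y)$ for $s\ge 0$, one has $\tilde{w}_p(0,\cdot)\to\varphi^+_{c_+}(x_0+\cdot)$ uniformly on $\R$; then by the continuous dependence of bounded solutions of~\eqref{eqf+} on their initial data over the compact time interval $[0,-t]$ (a standard consequence of the $C^1$ regularity of $f_+$ together with the maximum principle), one obtains $v^+(t_p,Y^+(t_p+t)+y)=\tilde{w}_p(-t,y)\to\varphi^+_{c_+}(x_0+y+c_+t)$ uniformly in $y\in\R$. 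Comparing this with $v^+(t_p,Y^+(t_p)+y)\to\varphi^+_{c_+}(x_0+y)$, the strict monotonicity and the boundary values $1$ and $0$ of $\varphi^+_{c_+}$ force $Y^+(t_p)-Y^+(t_p+t)\to-c_+t$ as $p\to+\infty$ (since otherwise a diverging subsequence of this difference would produce either $0$ or $1$ as a limit of $v^+(t_p,Y^+(t_p+t)+y)$ for fixed $y$, contradicting the bound $\varphi^+_{c_+}(x_0+y+c_+t)\in(0,1)$). Plugging this back into the identity $v^+_p(t,x)=\tilde{w}_p(0,x+Y^+(t_p)-Y^+(t_p+t))$ and using the uniform convergence of $\tilde{w}_p(0,\cdot)$ finally yields $v^+_\infty(t,x)=\varphi^+_{c_+}(x_0+x-c_+t)$ also for $t<0$, completing the proof.
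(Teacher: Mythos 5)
Your proof is correct and rests on the same two pillars as the paper's argument: the uniform convergence \eqref{convphi+2} evaluated along the times $t_p+t$, and the uniqueness of bounded solutions of the Cauchy problem for the autonomous equation \eqref{eqf+}. The organization differs in one place. The paper treats all $t\in\R$ at once: comparing $v^+_p(t,0)\to v^+_\infty(t,0)\in(0,1)$ with \eqref{convphi+2} written at time $t_p+t$ shows that $x_0+Y^+(t_p)-Y^+(t_p+t)$ converges to $(\varphi^+_{c_+})^{-1}(v^+_\infty(t,0))$, hence that each time-slice $v^+_\infty(t,\cdot)$ is a translate of $\varphi^+_{c_+}$, and forward uniqueness applied between two arbitrary times $s<t$ then forces the shift to equal $x_0-c_+t$. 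You instead identify the profile at $t=0$ first, dispose of $t\ge0$ by forward uniqueness, and for $t<0$ invoke continuous dependence on initial data over the compact interval $[0,-t]$ to transport the profile from time $t_p+t$ to time $t_p$ and extract the limit $Y^+(t_p)-Y^+(t_p+t)\to-c_+t$. That extra ingredient is legitimate (an $L^\infty$ Gronwall estimate using $f_+\in C^1([0,1])$ and $0<v^+<1$), and your monotonicity argument excluding a diverging subsequence of $Y^+(t_p)-Y^+(t_p+t)$ is sound; the paper's version avoids continuous dependence by reading the translation directly off \eqref{convphi+2}, but both routes are complete.
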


\noindent{\bf{Proof.}} Since $v^+_p(0,0)=v^+\big(t_p,Y^+(t_p)\big)=1/2$ for all $p\in\N$, there holds $v^+_{\infty}(0,0)=1/2$ and~$0<v^+_{\infty}<1$ in~$\R^2$ from the strong maximum principle. For every $t\in\R$ and $p$ large enough so that $t_p+t\ge0$, one has
$$v^+\big(t_p\!+\!t,Y^+(t_p\!+\!t)\!+\!Y^+(t_p)\!-\!Y^+(t_p\!+\!t)\big)\!=\!v^+(t_p\!+\!t,Y^+(t_p)\big)\!=\!v^+_p(t,0)\to v^+_{\infty}(t,0)\in(0,1)$$
as $p\to+\infty$, while $v^+\big(t_p+t,Y^+(t_p+t)+Y^+(t_p)-Y^+(t_p+t)\big)-\varphi_{c_+}^+\big(x_0+Y^+(t_p)-Y^+(t_p+t)\big)\to0$ as~$p\to+\infty$, from~(\ref{convphi+2}). Therefore,
\be\label{Y+tp}
x_0+Y^+(t_p)-Y^+(t_p+t)\to(\varphi^+_{c_+})^{-1}(v_{\infty}^+(t,0))\ \hbox{ as }p\to+\infty.
\ee
Using again~(\ref{convphi+2}), with~(\ref{Y+tp}), one infers that, for every $(t,x)\in\R^2$ (and $t_p+t\ge0$)
$$v^+_p(t,x)\!=\!v^+\big(t_p\!+\!t,Y^+(t_p\!+\!t)\!+\!Y^+(t_p)\!-\!Y^+(t_p\!+\!t)\!+\!x\big)\to\varphi^+_{c_+}\big((\varphi^+_{c_+})^{-1}(v_{\infty}^+(t,0))\!+\!x\big)$$
as $p\to+\infty$, whence
\be\label{vinfty+2}
v^+_{\infty}(t,x)=\varphi^+_{c_+}\big((\varphi^+_{c_+})^{-1}(v_{\infty}^+(t,0))+x\big).
\ee
As a consequence, for every $s<t\in\R$ and $x\in\R$, it follows from the uniqueness of the bounded solutions of the Cauchy problem associated to~(\ref{eqf+}) that
$v^+_{\infty}(t,x)=\varphi^+_{c_+}\big((\varphi^+_{c_+})^{-1}(v_{\infty}^+(s,0))+x-c_+(t-s)\big)$,
whence~$(\varphi^+_{c_+})^{-1}(v_{\infty}^+(t,0))=(\varphi^+_{c_+})^{-1}(v_{\infty}^+(s,0))-c_+(t-s)$. Finally,
$$(\varphi^+_{c_+})^{-1}(v_{\infty}^+(t,0))=(\varphi^+_{c_+})^{-1}(v_{\infty}^+(0,0))-c_+t=(\varphi^+_{c_+})^{-1}\Big(\frac12\Big)-c_+t=x_0-c_+t\ \hbox{ for all }t\in\R,$$
and the desired conclusion of Lemma~\ref{vinfty+} follows from~(\ref{vinfty+2}).\hfill$\Box$\break

Now, call, for $p\in\N$ and $(t,x)\in[-t_p,+\infty)\times\R$,
$u_p(t,x)=u\big(t_p+t,Y^+(t_p)+x\big)$.
Each function~$u_p$ satisfies $(u_p)_t=(u_p)_{xx}+f(t_p+t,u_p)$ in $[-t_p,+\infty)\times\R$. It follows then from standard parabolic estimates and~(\ref{fpm}) that, up to extraction of a subsequence, the functions $u_p$ converge in $C^{1,2}_{loc}(\R^2)$ to a classical solution $0\le u_{\infty}\le1$ of~(\ref{eqf+}) in $\R^2$.

\begin{lemma}\label{uinfty}
There is a real number $x_{\infty}$ such that
$u_{\infty}(t,x)=\varphi^+_{c_+}(x-c_+t+x_{\infty})$ for all $(t,x)\in\R^2$.
\end{lemma}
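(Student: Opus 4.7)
The plan is to identify $u_\infty$ with a translate of the standard traveling front $\varphi_{c_+}^+$ via a Liouville-type rigidity argument, in the spirit of Step~3 of the proof of Proposition~\ref{pro:exsupercrit}. First I would pass to the limit $p\to+\infty$ in \eqref{ubeta} evaluated at $(t_p+t,Y^+(t_p)+x)$: by Lemma~\ref{vinfty+} the shifts $v^+(t_p+t,Y^+(t_p)+x)$ converge locally uniformly in $\R^2$ to $\varphi_{c_+}^+(x-c_+t+x_0)$, and $u_p\to u_\infty$ locally uniformly as well, so one obtains
$$\beta\,\varphi_{c_+}^+(x-c_+t+x_0)\le u_\infty(t,x)\le\min\bigl(\beta^{-1}\varphi_{c_+}^+(x-c_+t+x_0),\,1\bigr)$$
for every $(t,x)\in\R^2$. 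The strong maximum principle applied to the homogeneous equation~\eqref{eqf+} then forces $0<u_\infty<1$ in $\R^2$.

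Next I would establish the transition-front behavior of $u_\infty$ needed by the classification. From~\eqref{Xtau}, $|X(t_p+t)-X(t_p)|$ is bounded in $p$ uniformly for $t$ in any compact set. Moreover, combining~\eqref{ubeta}, \eqref{convphi+2} and~\eqref{infsup}, one sees, as in Substep~5.3, that $Y^+(t)-X(t)$ stays bounded on $[T_0,+\infty)$ for some $T_0$. Hence $Y^+(t_p)-X(t_p+t)$ remains bounded uniformly in $p$ and in $t$ on compact intervals, and the uniform transition-front property of $u$ passes to the limit: $u_\infty(t,x)\to 1$ as $x\to-\infty$ and $u_\infty(t,x)\to 0$ as $x\to+\infty$, locally uniformly in~$t$. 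Combined with the matching exponential decay rate $\tilde\kappa$ at $+\infty$ (with the logarithmic correction of~\eqref{asymvarphi} when $c_+=2\sqrt{\mu_+}$) inherited from the sandwich, Proposition~4.3 of~\cite{NR1} then furnishes a constant $a\ge 0$ such that
$$\varphi_{c_+}^+(x-c_+t+a)\le u_\infty(t,x)\le\varphi_{c_+}^+(x-c_+t-a),\qquad(t,x)\in\R^2.$$

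Finally, the Liouville-type classification of Theorem~3.5 in~\cite{bh1} (see also Lemma~8.2 of~\cite{hnrr2}), applied to the entire solution $u_\infty$ of~\eqref{eqf+} trapped between two shifts of $\varphi_{c_+}^+$, yields the desired $x_\infty\in\R$ with $u_\infty(t,x)=\varphi_{c_+}^+(x-c_+t+x_\infty)$. The step I expect to require the most care is the critical case $c_+=2\sqrt{\mu_+}$ that arises when $\tilde\kappa=\sqrt{\mu_+}\le\lambda^-_{\tilde c_+}$: there the front $\varphi_{c_+}^+$ carries the logarithmic asymptotics of~\eqref{asymvarphi}, so the sandwich constants $\beta^{\pm1}$ obtained at the first step have to be converted into a genuine shift $a\ge 0$ against a front with non-pure-exponential tail; however, the cited sandwich and rigidity results are formulated so as to cover all $c_+\ge 2\sqrt{\mu_+}$ uniformly, and the critical and supercritical cases can be treated together.
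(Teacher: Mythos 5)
Your proposal is correct and follows essentially the same route as the paper: pass to the limit in \eqref{ubeta} to sandwich $u_\infty$ between $\beta^{\pm1}$ times $v^+_\infty=\varphi^+_{c_+}(\cdot-c_+\cdot+x_0)$, upgrade the multiplicative constants to a two-sided trapping between shifts of $\varphi^+_{c_+}$ (with exactly the care you flag for the logarithmic tail when $c_+=2\sqrt{\mu_+}$), and conclude via Proposition~4.3 of~\cite{NR1} together with the Liouville theorem of~\cite{bh1}. The only cosmetic deviation is how you obtain $u_\infty(t,\cdot)\to1$ at $-\infty$ (boundedness of $Y^+-X$ and passage of~\eqref{gtf} to the limit, versus the paper's stability-of-$1$ comparison argument as in Lemma~\ref{lemvfront}); both work.
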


\noindent{\bf{Proof.}} It follows from~(\ref{ubeta}) that $0<\beta\,v^+_p(t,x)\le u_p(t,x)\le\min\big(\beta^{-1}v^+_p(t,x),1\big)$ for all~$p\in\N$ and~$(t,x)\in[-t_p,+\infty)\times\R$. Therefore,
\be\label{uv+}
0<\beta\,v^+_{\infty}(t,x)\le u_{\infty}(t,x)\le\min\big(\beta^{-1}v^+_{\infty}(t,x),1\big)\ \hbox{ for all }(t,x)\in\R^2,
\ee
by passing to the limit as $p\to+\infty$. Since $\inf_{\R^2}v^+_{\infty}=0$ by Lemma~\ref{vinfty+}, the function $u_{\infty}$ cannot be identically equal to $1$. Finally,~$0<u_{\infty}<1$ in $\R^2$ from the strong maximum principle. Furthermore, Lemma~\ref{vinfty+} and~(\ref{uv+}) imply that $\inf_{x-c_+t\le0}u_{\infty}(t,x)>0$ and, as in the proof of Lemma~\ref{lemvfront}, one can then show that~$u_{\infty}(t,x)\to1$ as $x-c_+t\to-\infty$. On the other hand, it also follows from Lemma~\ref{vinfty+} and~(\ref{uv+}) together with $\varphi^+_{c_+}(\xi)\sim e^{-\tilde{\kappa}\xi}$ if $c_+>2\sqrt{\mu_+}$ (resp.~$\varphi^+_{c_+}(\xi)\sim\xi\,e^{-\tilde{\kappa}\xi}$ if~$c_+=2\sqrt{\mu_+}$) as $\xi\to+\infty$, that there is $A\ge0$ such that
$\varphi_{c_+}^+(x-c_+t+A)\le u_{\infty}(t,x)\le\varphi_{c_+}^+(x-c_+t-A)$ for all $(t,x)\in\R^2$ with $x-c_+t\ge0$.
As a consequence, as in Step~3 of the proof of Proposition~\ref{pro:exsupercrit}, by combining Proposition~4.3 in~\cite{NR1} and Theorem~3.5 of~\cite{bh1} (see also Lemma~8.2 of~\cite{hnrr2}, adapted here to the homogeneous case), the conclusion of Lemma~\ref{uinfty} follows, for some real number~$\xi_{\infty}\in[-A,A]$.\hfill$\Box$\break

We are finally able to complete the proof of the limit~\eq{phi+-} as $t\to+\infty$, that is the convergence of $u$ to $\varphi^+_{c_+}$ along its level sets as $t\to+\infty$. Remember that the function $\xi$ was defined by~(\ref{defxi2}) for $t>0$. In particular, $u\big(t_p,X(t_p)+\xi(t_p)\big)=u\big(t_p,X^+(t_p)\big)=\varphi_{c_+}^+(0)$ for every~$p\in\N$, while~$u\big(t_p,Y^+(t_p)\big)=u_p(0,0)\to u_{\infty}(0,0)=\varphi_{c_+}^+(x_{\infty})\in(0,1)$ as $p\to+\infty$, from Lemma~\ref{uinfty}. Therefore,~(\ref{gtf}) implies that the sequence $(X(t_p)+\xi(t_p)-Y^+(t_p))_{p\in\N}$ is bounded, whence
$$u_p\big(0,X(t_p)+\xi(t_p)-Y^+(t_p)\big)-u_{\infty}\big(0,X(t_p)+\xi(t_p)-Y^+(t_p)\big)\to0\ \hbox{ as }p\to+\infty.$$
But, for all $p\in\N$,
$$\left\{\baa{lcl}
u_p\big(0,X(t_p)+\xi(t_p)-Y^+(t_p)\big) & = & u\big(t_p,X(t_p)+\xi(t_p)\big)\ =\ \varphi^+_{c_+}(0),\vspace{3pt}\\
u_{\infty}\big(0,X(t_p)+\xi(t_p)-Y^+(t_p)\big) & = & \varphi_{c_+}^+\big(X(t_p)+\xi(t_p)-Y^+(t_p)+x_{\infty}\big).\eaa\right.$$
One infers that $X(t_p)+\xi(t_p)-Y^+(t_p)+x_{\infty}\to0$ as $p\to+\infty$. As a consequence,
$$u\big(t_p,X(t_p)+\xi(t_p)+x)=u_p\big(0,X(t_p)+\xi(t_p)-Y^+(t_p)+x\big)\mathop{\longrightarrow}_{p\to+\infty}u_{\infty}(0,x-x_{\infty})=\varphi_{c_+}^+(x)$$
from Lemma~\ref{uinfty}, and the convergence holds locally uniformly with respect to $x$. Furthermore, since~$u\big(t_p,X(t_p)+x\big)\to1$ (resp. $0$) as $x\to-\infty$ (resp. $x\to+\infty$) uniformly in $p\in\N$ by~(\ref{gtf}), since~$\xi:(0,+\infty)\to\R$ is bounded and since $\varphi_{c_+}^+(-\infty)=1$ and $\varphi_{c_+}^+(+\infty)=0$, one gets
that~$u\big(t_p,X(t_p)+\xi(t_p)+x\big)\to\varphi_{c_+}^+(x)$ as $p\to+\infty$ uniformly in
$x\in\R$. But the limit does not depend on the sequence $(t_p)_{p\in\N}$ converging to $+\infty$. From the compactness arguments used in the above proof, one concludes that $u\big(t,X(t)+\xi(t)+x\big)\to\varphi_{c_+}^+(x)$ as $t\to+\infty$ uniformly in $x\in\R$, and standard parabolic estimates also imply that the convergence holds in~$C^2(\R)$.\par
Finally, by defining $\xi:\R\to\R$ by~(\ref{defxi1}) in $(-\infty,0]$ and
by~(\ref{defxi2}) in $(0,+\infty)$, the function~$\xi$ is bounded and the
conclusion~\eq{phi+-}
holds with $\phi_{c_{\pm}}(x)=\varphi^{\pm}_{c_{\pm}}(x)$. The proof of
Theorem~\ref{th2} is thereby complete.\hfill$\Box$

\subsubsection{Proof of Lemma~\ref{lemmev}}

We recall from Substeps 5.1 and 5.2 above that $0<v=u_{\mu}<1$ obeys~(\ref{eqv}), where the measure~$\mu\in\mathcal{M}$ is supported in $[c_-,\tilde{c}_+]\subset(2\sqrt{\mu_-},+\infty)$, and $c_-$ and $\tilde{c}_+$ are the leftmost and rightmost points of the support of $\mu$. Here, $M=\mu\big([c_-,\tilde{c}_+]\big)>0$ and the inequalities~(\ref{umu}) amount to
\be\label{vtildeTbis}
M^{-1}\!\!\int_{[c_-,\tilde{c}_+]}\!\!\varphi^-_c\big(x\!-\!c\ln M\big)\,d\mu(c)\le v(0,x)\le M^{-1}\!\!\int_{[c_-,\tilde{c}_+]}\!\!e^{-\lambda^-_c(x-c\ln M)}\,d\mu(c)
\ee
for all $x\in\R$. If $c_-=\tilde{c}_+$, then $\mu=M\,\delta_{c_-}$ and $v(0,x)=\varphi^-_{c_-}\big(x-c_-\ln M\big)$ for all~$x\in\R$, hence the desired conclusion~(\ref{vtildeT}) is immediate since $\varphi^-_{c_-}(\xi)\sim e^{-\lambda_{c_-}^-\xi}=e^{-\lambda_{\tilde{c}_+}^-\xi}$ as~$\xi\to+\infty$.\par
Let us now consider in the sequel the case $c_-<\tilde{c}_+$. We first show that, for every $c'\in(c_-,\tilde{c}_+)$,
\be\label{vtilde4}
v(0,x)\sim M^{-1}\int_{[c',\tilde{c}_+]}e^{-\lambda^-_c(x-c\ln M)}\,d\mu(c)\
\hbox{ as }x\to+\infty.
\ee
To do so, let $c'$ be arbitrary in the open interval $(c_-,\tilde{c}_+)$ and let
$c''$ be such that $c_-<c'<c''<\tilde{c}_+$. It follows from~(\ref{vtildeTbis})
that
\be\label{vtildeT3}
\displaystyle\underbrace{M^{-1}\!\!\!\int_{[c',\tilde{c}_+]}\!\!\!\!\!\varphi^-_c\big(x\!-\!c\ln M\big)d\mu(c)}_{=:I_1(x)}\!\le\!v(0,x)\!\le\!\displaystyle\underbrace{M^{-1}\!\!\!\int_{[c_-,c')}\!\!\!\!\!e^{-\lambda^-_c(x-c\ln M)}d\mu(c)}_{=:I_2(x)}+\underbrace{M^{-1}\!\!\!\int_{[c',\tilde{c}_+]}\!\!\!\!\!e^{-\lambda^-_c(x-c\ln M)}d\mu(c)}_{=:I_3(x)}
\ee
for all $x\in\R$. We shall show that $v(0,x)\sim I_3(x)$ as $x\to+\infty$.
Since $\lambda^-_c\ge\lambda_{c'}^->\lambda^-_{c''}>0$ for all $c\in[c_-,c')$,
one gets from one hand that
$I_2(x)=O(e^{-\lambda^-_{c'}x})=o(e^{-\lambda^-_{c''}x})$ as $x\to+\infty$,
while, from the other, that
$$\displaystyle\liminf_{x\to+\infty}\big(e^{\lambda^-_{c''}x}
I_3(x)\big)\ge\displaystyle\liminf_{x\to+\infty}\left(e^{\lambda^-_{c''}x}M^{-1}
\!\!\int_{[c'',\tilde{c}_+]}\!\!\!\!e^{-\lambda^-_c(x-c\ln
M)}d\mu(c)\right)\!\ge\displaystyle M^{-1}\!\!
\int_{[c'',\tilde{c}_+]}\!\!\!\!e^{\lambda^-_cc\ln M}d\mu(c),$$
which is positive because $\mu\big([c'',\tilde{c}_+]\big)>0$ by definition of
$\tilde{c}_+$. As a consequence,~$I_2(x)=o(I_3(x))$ as $x\to+\infty$ and
\be\label{I23}
I_2(x)+I_3(x)\sim I_3(x)\ \hbox{ as }x\to+\infty.
\ee
As far as the left-hand side of~(\ref{vtildeT3}) is concerned, we claim that
\be\label{claimeta}
\forall\,\eta>0,\ \exists\,A_{\eta}\in\R,\ \forall\,x\ge A_{\eta},\ \forall\,c\in[c',\tilde{c}_+],\ \ \varphi_c^-(x)\ge(1-\eta)\,e^{-\lambda_c^-x}.
\ee
Let us assume temporarily this claim and finish the proof of
Lemma~\ref{lemmev}.
It follows from~\eqref{claimeta} that, for every $\eta>0$, there holds
$I_1(x)\ge(1-\eta)\,I_3(x)$ for $x$ large enough. Together with~(\ref{vtildeT3})
and~(\ref{I23}), one infers that $v(0,x)\sim I_3(x)$ as $x\to+\infty$, that
is~(\ref{vtilde4}).\par
In the final step, let $x_0\in\R$ be fixed. We want to show that $v(0,x+x_0)/v(0,x)\to e^{-\lambda^-_{\tilde{c}_+}x_0}$ as $x\to+\infty$. Let $\epsilon\in(0,1)$ be arbitrary. Since the map $c\mapsto\lambda^-_c$ is continuous on $[2\sqrt{\mu_-},+\infty)$, it follows that there is~$c'\in(c_-,\tilde{c}_+)$ such that
$(1-\epsilon)\,e^{-\lambda^-_{\tilde{c}_+}x_0}\le e^{-\lambda^-_cx_0}\le(1+\epsilon)\,e^{-\lambda^-_{\tilde{c}_+}x_0}$ for all $c\in[c',\tilde{c}_+]$.
From~(\ref{vtilde4}), there is~$A\in\R$ such that, for all $x\ge A$,
$$\displaystyle (1-\epsilon)\,M^{-1}\!\!\int_{[c',\tilde{c}_+]}\!\!e^{-\lambda^-_c(x-c\ln M)}d\mu(c)\le v(0,x)\le\displaystyle (1+\epsilon)\,M^{-1}\!\!\int_{[c',\tilde{c}_+]}\!\!e^{-\lambda^-_c(x-c\ln M)}d\mu(c)$$
and
$$\displaystyle (1-\epsilon)\,M^{-1}\!\!\int_{[c',\tilde{c}_+]}\!\!
e^{-\lambda^-_c(x+x_0-c\ln M)}d\mu(c)\le v(0,x+x_0)\le\displaystyle
(1+\epsilon)\,M^{-1}\!\!\int_{[c',\tilde{c}_+]}\!\!e^{-\lambda^-_c(x+x_0-c\ln
M)}d\mu(c).$$
Putting together the previous three displayed formulas leads to
$$\frac{(1-\epsilon)^2}{1+\epsilon}\,e^{-\lambda^-_{\tilde{c}_+}x_0}\,v(0,x)\le v(0,x+x_0)\le\frac{(1+\epsilon)^2}{1-\epsilon}\,e^{-\lambda^-_{\tilde{c}_+}x_0}\,v(0,x)\ \hbox{ for all }x\ge A.$$
Since $\epsilon\in(0,1)$ was arbitrary, the desired conclusion~(\ref{vtildeT}) follows and the proof of Lemma~\ref{lemmev} is complete.\hfill$\Box$\break

\noindent{\bf{Proof of~(\ref{claimeta}).}} More generally, we fix any two real numbers $a$ and $b$ such that~$2\sqrt{\mu_-}<a\le b$. We recall that, for every $c\in(2\sqrt{\mu_-},+\infty)$, $\varphi_c^-(x)\sim e^{-\lambda^-_cx}$ as $x\to+\infty$, where $\lambda^-_c=(c-\sqrt{c^2-4\mu_-})/2$ solves~$(\lambda^-_c)^2-c\lambda^-_c+\mu_-=0$, and we want to show that, for every~$\eta>0$, there exists a real number~$A_{\eta}$ such that
\be\label{Aeta}
\varphi^-_c(x)\ge(1-\eta)\,e^{-\lambda^-_cx}\hbox{ for all }c\in[a,b]\hbox{ and }x\ge A_{\eta}.
\ee
In other words, we want to show that the asymptotic exponential decay of $\varphi^-_c$ at $+\infty$ is uniform with respect to $c\in[a,b]$.\par
To do so, notice first that, for every $c\in[a,b]$,
$0<\lambda_b^-\le\lambda^-_c\le\lambda^-_a<\sqrt{\mu_-}<\tilde{\lambda}_c:=(c+\sqrt{c^2-4\mu_-})/2$.
Since, for every $c\in[a,b]$, $\lambda^-_c<\tilde{\lambda}_c$ are the two roots of the equation $X^2-cX+\mu_-=0$, there are $\alpha\in(0,1)$ and $\beta>0$ such that $\lambda_c^-<(1+\alpha)\,\lambda^-_c<\tilde{\lambda}_c$ for all $c\in[a,b]$ and
\be\label{alphabeta}
(1+\alpha)^2\,(\lambda^-_c)^2-c\,(1+\alpha)\,\lambda^-_c+\mu_-\le-\beta\ \hbox{ for all }c\in[a,b].
\ee
Now, since $f_-$ is of class $C^2([0,1])$ with $f_-(0)=0$ and $f_-'(0)=\mu_-$,
there is $s_0\in(0,1)$ such that~$f_-(s)\ge\mu_-s-s^{1+\alpha}$ for all
$s\in[0,s_0]$. Next, it is straightforward to check that there is a positive
real number $B$ such that
\be\label{B}
\beta\,B\ge1\ \hbox{ and }\ \underline{u}_c(x):=\max\big(e^{-\lambda^-_cx}-B\,e^{-(1+\alpha)\lambda^-_cx},0\big)\le s_0\hbox{ for all }c\in[a,b]\hbox{ and }x\in\R.
\ee
For all $c\in[a,b]$ and $x\in\R$ such that $\underline{u}_c(x)>0$, there holds
$0\!<\!\underline{u}_c(x)\!=\!e^{-\lambda^-_cx}\!-\!Be^{-(1\!+\!\alpha)\lambda^-_cx}\!\le\!\min(s_0,e^{-\lambda^-_cx})$,
whence
$$\baa{rcl}
\underline{u}_c''(x)+c\,\underline{u}_c'(x)+f_-(\underline{u}_c(x)) & \!\!=\!\! & -\mu_-e^{-\lambda^-_cx}-B\big((1+\alpha)^2(\lambda^-_c)^2-c(1+\alpha)\lambda^-_c\big)e^{-(1+\alpha)\lambda^-_cx}+f_-(\underline{u}_c(x))\vspace{3pt}\\
& \!\!\ge\!\! &
-\mu_-e^{-\lambda^-_cx}+B\big(\mu_-+\beta\big)e^{-(1+\alpha)\lambda^-_cx}
+f_-(\underline{u}_c(x))\\
& \!\!\ge\!\! &
\!-\mu_-\underline{u}_c(x)+\underline{u}_c^{1+\alpha}(x)+f_-(\underline{u}
_c(x))\vspace{3pt}\\
& \!\!\ge\!\! & 0\eaa$$
from~(\ref{alphabeta}) and~(\ref{B}). In other words, since $f_-(0)=0$, the functions $\underline{u}_c$ are subsolutions of the equations~$(\varphi^-_c)''+c(\varphi^-_c)'+f_-(\varphi_c)=0$ satisfied by the functions $\varphi^-_c$.\par
In this paragraph, $c$ is a fixed real number in $[a,b]$. We want to show that
$\underline{u}_c(x)\le\varphi^-_c(x)$ for all $x\in\R$. Notice that both
functions $\underline{u}_c$ and $\varphi^-_c$ have the same exponential decay,
namely~$e^{-\lambda^-_cx}$, as~$x\to+\infty$, but one cannot directly apply the
maximum principle as~$x\to+\infty$ since~$f'_-(0)=\mu_->0$.
However, we are going to use a sliding method. Remember
that~$\underline{u}_c(x)\le\min\big(e^{-\lambda^-_cx},s_0\big)$ for all
$x\in\R$, that $\varphi_c^-(x)\sim e^{-\lambda^-_cx}$ as $x\to+\infty$, and that
the positive continuous function~$\varphi^-_c$ converges to $1\,(>s_0)$ at
$-\infty$. Therefore, there is~$x_0>0$ such that
$\underline{u}_c(x)\le\varphi^-_c(x-x_0)$ for all $x\in\R$. Define
$$x_*=\min\big\{x'\ge0,\ \underline{u}_c(x)\le\varphi^-_c(x-x')\hbox{ for all }x\in\R\big\}.$$
The real number $x_*$ is well defined, with $0\le x_*\le x_0$, and $\underline{u}_c(x)\le\varphi^-_c(x-x_*)$ for all $x\in\R$. Assume, by contradiction, that $x_*>0$. Then there are some sequences $(x_p)_{p\in\N}$ in $(0,x_*)$ and~$(y_p)_{p\in\N}$ in $\R$ such that
$x_p\to x_*$ as $p\to+\infty$ and $\underline{u}_c(y_p)>\varphi^-_c(y_p-x_p)$ for all $p\in\N$.
Since $x_*>0$ and $\underline{u}_c(x)\le e^{-\lambda^-_cx}\sim\varphi^-_c(x)$
as
$x\to+\infty$, one infers that $\limsup_{p\to+\infty}y_p<+\infty$. Furthermore,
$\liminf_{p\to+\infty}y_p>-\infty$ since
$\underline{u}_c(-\infty)=0<1=\varphi^-_c(-\infty)$. Therefore, the sequence
$(y_p)_{p\in\N}$ is bounded and converges, up to extraction of a subsequence, to
a real number~$y_{\infty}$. It follows that
$\underline{u}_c(y_{\infty})\ge\varphi^-_c(y_{\infty}-x_*)$, while
$\underline{u}_c(x)\le\varphi^-_c(x-x_*)$ for all $x\in\R$ (hence,
$\underline{u}_c(y_{\infty})=\varphi^-_c(y_{\infty}-x_*)>0$). But
$\underline{u}_c$ is a subsolution of the equation satisfied by~$\varphi^-_c$.
It follows from the strong maximum principle that
$\underline{u}_c(x)=\varphi^-_c(x-x_*)$ for all $x$ belonging to any interval
$(z_1,z_2)$ containing $y_{\infty}$ and where $\underline{u}_c$ is positive. By
definition of~$\underline{u}_c$, there is~$x_-\in(-\infty,y_{\infty})$ such that
$\underline{u}_c(x)>0$ on $(x_-,y_{\infty}]$ with $\underline{u}_c(x_-)=0$. It
follows by continuity that~$\varphi^-_c(x_--x_*)=\underline{u}_c(x_-)=0$, which
is a contradiction since $\varphi^-_c$ is positive in~$\R$.
Consequently,~$x_*=0$ and~$\underline{u}_c(x)\le\varphi^-_c(x)$ for
all~$x\in\R$.\par
Finally, for all $c\in[a,b]$ and $x\in\R$, there holds $\varphi^-_c(x)\ge
e^{-\lambda^-_cx}(1-B\,e^{-\alpha\lambda^-_cx})$, where
$\alpha>0$ and $B$
defined in~(\ref{alphabeta}) and~(\ref{B}) are independent of $c\in[a,b]$.
Since~$\lambda^-_c\ge\lambda^-_b>0$ for all~$c\in[a,b]$,  the
conclusion~(\ref{Aeta}) follows immediately. The proof of the
claim~(\ref{claimeta}) is thereby complete.\hfill$\Box$


\subsection{Proof of Theorem~\ref{thm:decay}}\label{sec33}

Let $f$ and $u$ be as in Theorem~\ref{thm:decay}. In order to show that $u(t,x)$ has a exponential decay rate as~$x\to+\infty$, the method is, as in the proof of Theorem~\ref{th2}, to compare $u$ with a solution $v$ of~\eqref{eqv}. By using~\eqref{hyphn}, it will follow that~$v$ is of the type $v=u_{\mu}$ for some measure $\mu\in\mathcal{M}$. Since such solutions $u_{\mu}$ turn out to have an exponential decay rate $\lambda\ge0$ as $x\to+\infty$, so does~$u$. Lastly, we distinguish the two cases $\lambda>0$ and~$\lambda=0$ and we show in the former that $u$ is then a transition front connecting $0$ and $1$ and we identify its asymptotic past and future speeds by comparison arguments.\par
First of all, observe that Steps~1 and~2 of the proof of Theorem~\ref{th2} can be reproduced word by word, since they did not use the fact that $u$ was a transition front connecting $0$ and $1$. Therefore, by defining the sequence $(v_n)_{n\in\N}$ and the function $v$ as there, there is then a solution $0<v\le 1$ of~\eqref{eqv} such that~\eqref{comparuv} holds, that is,
\be\label{comparuvbis}
u(t,x)\,e^{-\mu_-\Theta(t)}\le v(t,x)\le u(t,x)\,e^{\mu_-\Theta(t)}\ \hbox{ for all }(t,x)\in\R^2,
\ee
where $\Theta$ (and $\zeta$) are as in~\eqref{defTheta} (and~\eqref{hypf-bis}).
Since $\Theta$ is bounded in, say, $(-\infty,0]$, it follows then from the
assumption~\eqref{hyphn} that $\max_{[-c|t|,c|t|]}v(t,\cdot)\to0$ as
$t\to-\infty$ with $c>2\sqrt{\mu_-}=2\sqrt{f'_-(0)}$ (whence, in particular,
$v(t,x)<1$ for all $(t,x)\in\R\times\R$, from the strong maximum principle).
Furthermore, Theorem~1.4 of~\cite{hn2} implies that the solution $v$
of~\eqref{eqv} is then of the type $v=u_{\mu}$ for some
measure~$\mu\in\mathcal{M}$ (associated with the function $f_-$) whose support
satisfies
\be\label{suppmu}
\hbox{supp}(\mu)\,\subset\,(-\infty,-c]\cup[c,+\infty)\cup\{\infty\}.
\ee
As a consequence, one infers from Theorem~1.9 of~\cite{hr} applied to the function $v^{\tau}$ defined in~$\R\times\R$ by~$v^{\tau}(t,x):=v(t+\tau,x)$ for an arbitrary $\tau\in\R$ that $\lambda_{\tau}:=-\lim_{x\to+\infty}(\ln v(\tau,x))/x$ exists in~$[0,\sqrt{\mu_-})$. Furthermore, as explained after the statement of Theorem~Ê1.9 in~\cite{hr}, the real numbers $\lambda_{\tau}$ do not depend on $\tau\in\R$. Finally, by using~\eqref{comparuvbis}, there is $\lambda\in[0,\sqrt{\mu_-})$ such that~$-(\ln u(t,x))/x\to\lambda$ as~$x\to+\infty$ for all~$t\in\R$. We will then consider separately the cases~$\lambda>0$ and $\lambda=0$.\par
{\it Case 1: $\lambda>0$.} In this case, Theorem~1.9 of~\cite{hr} implies that $v=u_{\mu}$ is a transition front connecting $0$ and $1$ for~\eqref{eqv}, with a function $X_v:\R\to\R$ satisfying~\eqref{gtf}. We get then from part~(i) of Theorem~\ref{thsupport} and from~\eqref{suppmu} that the support of $\mu$ is a compact subset of~$[c,+\infty)\,(\subset(2\sqrt{\mu_-},+\infty))$. Furthermore, by part~(ii) of Theorem~\ref{thsupport}, the transition front~$v$ has then an asymptotic past speed $c_-\in[c,+\infty)\subset(2\sqrt{\mu_-},+\infty)$ and an asymptotic future speed~$\tilde{c}_+\in[c_-,+\infty)$. On the other hand, by~\eqref{gtf} and~\eqref{infsup} applied with $X_v$ and $v$, the past speed $c_-$ satisfies
\be\label{c-v}
c_-=\sup\Big\{\gamma\ge0,\ \lim_{t\to-\infty}\max_{[-\gamma|t|,\gamma|t|]}v(t,\cdot)=0\Big\},
\ee
while Lemma~\ref{lemmev} yields
$\lim_{x\to+\infty}v(0,x+x_0)/v(0,x)\!=\!e^{-\lambda^-_{\tilde{c}_+}x_0}$ for all
$x_0\in\R$
with~$\lambda^-_{\tilde{c}_+}\!=\!(\tilde{c}_+-\sqrt{\tilde{c}_+^2-4\mu_-})/2$.
Since one already knows that $\ln v(0,x)\sim-\lambda x$ as $x\to+\infty$,
one gets $\lambda=\lambda^-_{\tilde{c}_+}$, and
$v(0,x+x_0)/v(0,x)\to e^{-\lambda x_0}$ as $x\to+\infty$ for all $x_0\in\R$.\par
Define now $X(t)=X_v(t)$ for all $t\le0$. One has $\lim_{t\to-\infty}X(t)/t=c_-\ge c>2\sqrt{\mu_-}$ and, by~\eqref{comparuvbis} and~\eqref{c-v}, there holds
$c_-=\sup\big\{\gamma\ge0,\ \lim_{t\to-\infty}\max_{[-\gamma|t|,\gamma|t|]}u(t,\cdot)=0\big\}$.
By using again~\eqref{comparuvbis} and by interchanging the roles of $u$ and $v$ in the proof of Lemma~\ref{lemvfront}, it also follows that
\be\label{utf1}\left\{\baa{ll}
u(t,X(t)+x)\to 1 & \hbox{as }x\to-\infty\vspace{3pt}\\
u(t,X(t)+x)\to 0 & \hbox{as }x\to+\infty\eaa\right.\hbox{ uniformly in }t\in(-\infty,0].
\ee\par
As in Substep~5.4 of the proof of Theorem~\ref{th2}, one can then define $u^+(t,x)$ and $v^+(t,x)$ for~$(t,x)\in[0,+\infty)\times\R$ as in~\eqref{defu+} and~\eqref{defv+} and notice that, as in~\eqref{ubeta}, there is $\beta\in(0,1]$ such that
\be\label{ubetabis}
0<\beta\,v^+(t,x)\le u(t,x)\le\min\big(\beta^{-1}v^+(t,x),1\big)\ \hbox{ for all }(t,x)\in[0,+\infty)\times\R.
\ee
The function $v^+(0,\cdot)=v(0,\cdot)=u_{\mu}(0,\cdot)$ is continuously
decreasing in $\R$ with $v^+(0,-\infty)=1$, $v^+(0,+\infty)=0$ and we know that
$\lim_{x\to+\infty}v^+(0,x+x_0)/v^+(0,x)=e^{-\lambda x_0}$ for all $x_0\in\R$.
Therefore, by
defining~$Y^+(t)$ for~$t\ge0$ as in~\eqref{defY+}, we get from~\cite{u} as in
Substep~5.4 of the proof of Theorem~\ref{th2} that $Y^+(t)/t\to c_+$ as~$t\to+\infty$
and that~\eqref{convphi+2} holds, that is,
\be\label{v+Y+bis}
\sup_{x\in\R}\big|v^+\big(t,Y^+(t)+x\big)-\varphi_{c_+}^+(\xi_0+x)\big|\to0\ \hbox{ as }t\to+\infty,
\ee
where $\xi_0=(\varphi^+_{c_+})^{-1}(1/2)$ and $\varphi^+_{c_+}(x-c_+t)$ is a standard traveling front connecting $0$ and $1$ for~\eqref{eqf+}, with speed
$$c_+=\min(\lambda,\sqrt{\mu_+})+\frac{\mu_+}{\min(\lambda,\sqrt{\mu_+})}.$$
\par
Set $X(t)=Y^+(t)$ for $t>0$. Let us finally prove that $u$ is a transition front connecting $0$ and~$1$ for~\eq{P=f} with the function $X:\R\to\R$ (remember that we already know that~\eqref{utf1} holds). Let $\epsilon>0$ be arbitrary. By~\eqref{ubetabis} and~\eqref{v+Y+bis}, there are $T_{\epsilon}>0$ and $M_\e>0$ such that
\be\label{uXte1}
0<u(t,X(t)+x)\le\epsilon\hbox{ for all }t\ge T_\e\hbox{ and }x\ge M_\e.
\ee
Using again~\eqref{ubetabis} and~\eqref{v+Y+bis} and arguing as in the proof of Lemma~\ref{lemvfront}, one gets the existence of $T'_\e>0$ and $M'_\e>0$ such that
\be\label{uXte2}
1-\e\le u(t,X(t)+x)<1\hbox{ for all }t\ge T'_\e\hbox{ and }x\le-M'_\e.
\ee
Since $u(0,-\infty)=1$ and $u(0,+\infty)=1$ as a particular consequence
of~\eqref{utf1}, standard parabolic estimates imply that $u(t,-\infty)=1$ and
$u(t,+\infty)=0$ locally uniformly in $t\in\R$. On the other
hand,~$Y^+(t)\,(=X(t)$ for $t>0$) is locally bounded in $[0,+\infty)$ since
$v^+(t,-\infty)=1$ and~$v^+(t,+\infty)=0$ locally uniformly in~$t\ge0$. These
properties together with~\eqref{uXte1} and~\eqref{uXte2} imply that
$u(t,X(t)+x)\to1$ as~$x\to-\infty$ and~$u(t,X(t)+x)\to0$ as $x\to+\infty$,
uniformly in $t\ge0$. Thanks to~\eqref{utf1}, one infers that $u$ is a
transition front connecting $0$ and $1$ for~\eq{P=f}, with the function
$X:\R\to\R$ satisfying~\eqref{gtf}. It also follows from the previous properties
of $X_v$ and $Y^+$ that $X(t)/t\to c_{\pm}$ as $t\to\pm\infty$, where $c_{\pm}$
are given as in~\eqref{asympspeeds}. Finally, by Theorem~\ref{th2},~\eq{phi+-}
holds with $\phi_{c_{\pm}}=\varphi^{\pm}_{c_{\pm}}$, for some bounded function
$\xi:\R\to\R$. The proof is therefore complete in the
case $\lambda>0$.\par
{\it Case 2: $\lambda=0$.}
We know in this case that  $(\ln u(\tau,x))/x\to0$ as
$x\to+\infty$ for every $\tau\in\R$.
Suppose by contradiction that $u$ is a transition front connecting $0$ and $1$
for~\eq{P=f}. Take $\tau$ large enough so that~$f(t,s)\ge f_+(s)/2$ for all
$(t,s)\in[\tau,+\infty)\times[0,1]$. For any
$\gamma>\sqrt{2\mu_+}=\sqrt{2f_+'(0)}$, consider the standard traveling front
$\t\vp_\gamma(x-\gamma t)$ connecting $0$ and $1$ for the equation
\eqref{eqg} with $g=f_+/2$. Since $\t\vp_\gamma$ decays exponentially at
$+\infty$, therefore faster than $u(\tau,\.)$, and $u(\tau,\.)$ is
positive, continuous and tends to $1$ at $-\infty$, we can find $\beta\in(0,1)$
such that $\beta\t\vp_\gamma(x-\gamma\tau)\leq u(\tau,x)$ for all $x\in\R$.
Furthermore, $\beta\t\vp_\gamma(x-\gamma t)$ is a subsolution to \eqref{eq:P=f}
for $t>\tau$. It then follows from the maximum principle that
$u(t,\gamma t)\ge\beta\t\vp(0)>0$ for
all~$t>\tau$, from which, owing to~(\ref{gtf}) or~(\ref{infsup}), we derive
$\liminf_{t\to+\infty}X(t)/t\ge\gamma$. Since this is true for all~$\gamma>\sqrt{2\mu_+}$,
we obtain a contradiction
with~(\ref{boundedspeed}). Therefore, $u$ is not a transition front connecting
$0$ and $1$ and the proof of Theorem~\ref{thm:decay} is thereby
complete.\hfill$\Box$


\end{document}